\theoremstyle{plain}
\newtheorem{thm}{Theorem}
\newtheorem{cor}[thm]{Corollary}
\newtheorem{lem}[thm]{Lemma}
\newtheorem{prop}[thm]{Proposition}
\theoremstyle{definition}
\newtheorem{defn}[thm]{Definition}
\newtheorem{rem}{Remark}
\newcommand{\fro}{(\ )^{\FIS}}
\newcommand{\X}{X}
\newcommand{\U}{\mathrm{U}}
\newcommand{\f}{(\ )^{\IS}}
\newcommand{\Id}{\mathsf{Id}}
\newcommand{\ra}{\rightarrow}
\newcommand{\we}{\wedge}
\newcommand{\Ra}{\Rightarrow}
\newcommand{\Fil}{\mathsf{Fil}}
\newcommand{\FC}{\mathrm{L}}
\newcommand{\fIS}{f^{\mathsf{IS}}}
\newcommand{\hIS}{h^{\mathsf{IS}}}
\newcommand{\Hil}{\mathsf{Hil}}
\newcommand{\FHil}{\mathsf{FHil}}
\newcommand{\IS}{\mathsf{IS}}
\newcommand{\FIS}{\mathsf{FIS}}
\newcommand{\SIS}{\mathsf{IS_{S}}}
\newcommand{\gaIS}{\mathsf{IS_{\gamma}}}
\newcommand{\GabIS}{\mathsf{IS_{G}}}
\newcommand{\SHil}{\mathsf{Hil_{S}}}
\newcommand{\gaHil}{\mathsf{Hil_{\gamma}}}
\newcommand{\GabHil}{\mathsf{Hil_{G}}}
\newcommand{\tp}{\mathrm{\tau^{\pi}}}
\newcommand{\tup}{\mathrm{\tau_{1}^{\pi}}}
\newcommand{\tdp}{\mathrm{\tau_{2}^{\pi}}}
\newcommand{\Sp}{\mathrm{S^{\pi}}}
\newcommand{\Gp}{\mathrm{G^{\pi}}}
\newcommand{\gap}{\mathrm{\gamma^{\pi}}}
\def\@makefnmark}
\def\@makefnmark{}\def\useless@macro}
\begin{document}

\title{On the free frontal implicative semilattice extension of a frontal Hilbert algebra}

\author{Ramon Jansana and Hern\'an J.
San Mart\'{\i}n}

\maketitle

\begin{abstract}
In this paper we define a functor from the algebraic category of
frontal Hilbert algebras to the algebraic category of frontal
implicative semilattices which is  left adjoint to the forgetful
functor from the category of frontal implicative semilattices to
that of frontal Hilbert algebras.
\end{abstract}

\section{Introduction}

Frontal Heyting algebras were introduced by Esakia in
\cite{Esakia} as the algebraic models of the modalized  Heyting
calculus $mHC$. They are Heyting algebras together with a unary
modal box-like operation  with the algebraic properties of the
dual of the derivative operator of a topological  space when is
applied to the Heyting algebra of its open sets.

In \cite{CSM0} the notion of frontal operator is generalized to
Hilbert algebras, and hence to implicative semilattices too, as
well as the frontal operators $\gamma$, $S$ and $G$ considered by
Caicedo and Cignoli in \cite{Caici} (examples 3.1, 5.2 and 5.3
respectively) in the framework of Heyting algebras as particular
examples of implicitly definable compatible functions.

The variety of Hilbert algebras is the class of the subalgebras of
the reducts to the languge $\{\to, 1\}$ of the implicative
semilattices. In \cite{CJ2} an explicit definition of a left
adjoint of the forgetful functor from the category of implicative
semilattices to the category of Hilbert algebras is given. In
\cite{CSM} another explicit description of such an adjoint functor
was presented following an alternative path.

The main goal of this paper is to obtain an explicit definition of
a left adjoint functor to the forgetful functor from the category
of frontal implicative semilattices to the category of frontal
Hilbert algebras, thus providing for every frontal Hilbert algebra
a specific construction of its free extension to a frontal
implicative semilattice. Building on it we also obtain left
adjoint functors to the forgethful functor from the categories of
bounded implicative semilattices with a $\gamma$-function,
implicative semilattices with a successor function, and bounded
implicative semilattices with a Gabbay function to the categories
of bounded Hilbert algebras augmented with a $\gamma$-function,
Hilbert algebras augmented with a successor function, and bounded
Hilbert algebras augmented with a Gabbay function respectively.

The paper is organized as follows. In Section \ref{Sec:Prelim} we
introduce the preliminaries we need; they include the main
properties concerning frontal operators in Heyting algebras and
Hilbert algebras needed. We also present the explicit description
of a left adjoint to the forgetful functor from the category of
implicative semilattices to the category of Hilbert algebras that
we use in Section \ref{FHil} to obtain our left adjoint functor to
the forgetful functor from the category of frontal implicative
semilattices to the category of frontal Hilbert algebras. In
sections \ref{gaHil}, \ref{SHil} and \ref{GabHil} we use the
adjunctions presented in Section \ref{FHil} in order to obtain a
similar result for some categories of frontal implicative
semilattices and frontal Hilbert algebras determined by the
notions of $\gamma$-function, successor function, and a Gabbay
function.

\section{Preliminaries}\label{Sec:Prelim}

We start with the basic notions on posets we need along the paper.
Let $P = (P,\leq)$ be a poset. A subset $U\subseteq P$ is said to
be an \emph{upset} if for all $x, y \in P$ such that $x\in U$ and
$x\leq y$ we have $y\in U$. The notion of \emph{downset} is
defined dually. The \textit{upset generated} by a set $Y \subseteq
P$ is the set $[Y) = \{x\in P: (\exists y\in Y)\ y\leq x\}$ and
the downset generated by $Y$ the set $(Y] = \{x\in P:(\exists y\in
Y)\ x\leq y\}$. If $Y = \{y\}$, then we will write $[y)$ and $(y]$
instead of $[\{y\})$ and $(\{y\}]$, respectively. A set $I
\subseteq P$ is an \emph{order-ideal} if $I$ is a nonempty downset
that is up-directed, namely that for every $a, b \in I$ there is
$c \in I$ such that $a \leq c$ and $b \leq c$. Dually we have the
notion of order filter, but we will not use this notion in the
paper. We denote by $\Id(P)$ the set of order-ideals of $P$.

Given a set $X$ and a set $Y \subseteq X$ we denote the relative
complement of $Y$ to $X$, i.e., $X \setminus Y$, by $Y^{c}$, that
is $Y^{c}:=\{x \in X: x \not \in Y\}$. The context will always
make it clear with respect to which set we are taking the relative
complement.

We assume  the reader is familiar with the theory of Heyting
algebras \cite{BD,R} and propositional intuitionistic calculus, of
which the variety of Heyting algebras is the algebraic
counterpart. We recall that the lattice of all open sets of a
topological space $X$ is a Heyting algebra where the implication
from an open set $U$ to an open set $V$ is defined as the interior
of $U^{c} \cup V$. We denote that Heyting algebra by
$\mathcal{O}(X)$.

Let $(P,\leq)$ be a poset. We denote by $P^{+}$ the set of all
upsets of $(P,\leq)$ as well as the poset we obtain by ordering
this set by the inclusion relation. The set of all upsets of
$(P,\leq)$ is closed under intersections and unions of arbiratry
no-empty families and contains $P$ and $\emptyset$. Therefore we
have a complete distributive lattice. We can define the binary
operation $\Ra$ on $P^{+}$ by setting for every $U, V \in P^{+}$,
\begin{equation}\label{imp}
U\Ra V =(U\cap V^{c}]^{c}.
\end{equation}
Then, together  with $\Ra$,  the complete distributive lattice
$P^{+}$ is a Heyting algebra. In fact, we can look at $P^{+}$ as a
topology on $P$. In this topology $U \Ra V$ is precisely the
interior of $U^{c} \cup V$.

Hilbert algebras were introduced in the early 50's by Henkin for
some investigations of the implication of intuitionistic and other
non-classical logics (\cite{R}, pp.\ 16). In the 1960s, they were
studied especially by Horn \cite{H} and Diego \cite{D}.

\begin{defn}
A \emph{Hilbert algebra} is an algebra $H = (H,\ra,1)$ of type $(2,0)$
which satisfies the following conditions for every $a,b,c\in H$:
\begin{enumerate}[\normalfont 1)]
\item $a\ra (b\ra a) = 1$, \item $(a\ra (b\ra c)) \ra ((a\ra b)\ra
(a\ra c)) = 1$, \item if $a\ra b = b\ra a = 1$ then $a = b$.
\end{enumerate}
\end{defn}

In every Hilbert algebra $H$ we have a partial order $\leq $
defined by setting for every $a, b \in H$
\[
a\leq b \quad  \Longleftrightarrow \quad a\ra b = 1.
\]
This order is called \emph{the natural
order} of $H$. Relative to it, $1$ is the greatest (or top)
element.

In \cite{D} it was proved that the class of Hilbert algebras is a
variety. We write $\Hil$ for the variety of Hilbert algebras as well as
 for the the category whose objects are Hilbert
algebras and whose morphisms are the homomorphisms between them.
Several properties of Hilbert algebras can be found in \cite{B,D}.

A \emph{semilattice} is an algebra $(A, \we)$ of type (2) where
$\we$ is associative, commutative and idempotent. Given a
semilattice $(A, \we)$ the binary relation $\leq$ defined by
\[
a \leq  b \quad  \Longleftrightarrow \quad a \we b = a,
\]
for  every $a,b \in A$, is a partial order where any two elements
$a,b \in A$ have a greatest lower bound $a\we b$ (i.e., an
infimum). We say that a semilattice $(A, \we)$ is a
\textit{meet-semilattice} when we consider the partial order just
defined. Every poset $(P,\leq)$ with the property that any two
elements $a,b \in P$ have a greatest lower bound defines a
meet-semilattice by taking on $P$ the operation $\we$ defined by
setting for every $a, b \in P$ that $a \we b$ is the greatest
lower bound of $a, b$. The partial order of the meet-semilattice
$(P,\we)$ is the partial order $\leq$ we start with. A
meet-semilattice $(A, \we)$ is (upper) \textit{bounded} if $\leq$
has a greatest element, that we denote by $1$. Then we extend the
signature and we consider the algebra $(A, \we, 1)$. Throughout
this paper we just write \emph{semilattice} in place of
meet-semilattice.

Implicative semilattices were introduced in \cite{N}. For studies
of implicative semilattices we refer to \cite{KP,N1,N2}.
Implicative semilattices are the algebraic counterpart of the
$\{\we, \to, 1\}$-fragment of intuitionistic logic. They are a
combination of a Hilbert algebra and a semilattice where $a \to b$
is the meet-residual of $b$ by $a$.

\begin{defn}
An \emph{implicative semilattice} is an algebra $(H, \we, \to)$ of
type $(2,2)$ such that $(H,\we)$ is a semilattice and for every
$a,b,c\in H$,
\[
a\we b \leq c \quad \Longleftrightarrow \quad a\leq b\ra c,
\]
where $\leq$ is the semilattice order of $(H, \we)$.
\end{defn}

Every implicative semilattice has a greatest element, denoted by
$1$. In this paper we take the signature for implicative
semilattices to be $\{\we, \to, 1\}$ so that $(H, \we, \ra, 1)$ is
an implicative semilattice if $(H, \we, \ra)$ is one and $1$ is
its greatest element in the natural order. The class of
implicative semilattices is a variety, as it was proved by
Monteiro in \cite{Mon}. For more details about implicative
semilattices see \cite{Cu,N}.

It is known that implicative semilattices are the subalgebras of
the reducts of Heyting algebras to the language $\{\we, \to, 1\}$
and Hilbert algebras are the subalgebras of the $\{\to,
1\}$-reducts of implicative semilattices. Therefore, an arbitrary
equation in the language $\{\to, 1\}$  holds in every Heyting
algebra if and only if it holds in every implicative semilattice,
and this happens if and only if it holds in every Hilbert algebra.
In particular, we have that the equations
\begin{enumerate}
\item $(x\we y)\ra z \approx x \ra (y\ra z)$
\item $x\ra (y\we z) \approx (x\ra y)\we (x\ra z)$
\end{enumerate}
hold in every implicative semilattice. We highlight here the
properties of Hilbert algebras most relevant to the paper.

\begin{lem}
Let $H\in \Hil$ and $a,b,c\in H$. Then the following conditions
are satisfied:
\begin{enumerate}[\normalfont a)]
\item $a\ra a = 1$, \item $1\ra a = a$, \item $a\ra (b\ra c) =
b\ra (a \ra c)$, \item $a\ra (b\ra c) = (a\ra b)\ra (a\ra c)$,
\item if $a\leq b$, then $c\ra a \leq c\ra b$ and $b\ra c \leq a\ra
c$.
\end{enumerate}
\end{lem}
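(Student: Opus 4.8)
The plan is to prove each of the five identities by reducing everything to the three Hilbert algebra axioms together with the definition of the natural order $a \leq b \iff a \ra b = 1$. Since each statement is either an equation or an order-implication, I would verify them in the order given, using earlier parts to streamline the later ones.

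For part a), the plan is to obtain $a \ra a = 1$ from axiom 2) by a suitable substitution. Setting $b := a \ra a$ and $c := a$ in axiom 2) turns the hypothesis $a \ra (b \ra c)$ into an instance of axiom 1), and after simplification this should force $a \ra a = 1$; this is the standard bootstrapping step and I expect it to be the first small obstacle, since it is the one identity that cannot lean on any previously established fact. For part b), once a) is available I would argue that $1 \ra a = a$ by showing $(1 \ra a) \ra a = 1$ and $a \ra (1 \ra a) = 1$ and then invoking axiom 3) (antisymmetry of $\leq$); the second of these is immediate from axiom 1) with the roles arranged so that $a \ra (1 \ra a)$ matches $a \ra (b \ra a)$, and the first follows by an application of axiom 2) specialized at the top element.

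For part c), the commutation $a \ra (b \ra c) = b \ra (a \ra c)$, I would again use antisymmetry: it suffices to prove $a \ra (b \ra c) \leq b \ra (a \ra c)$, because the reverse inequality is then the same statement with $a$ and $b$ interchanged. To get this inequality I would unfold it as an equation $\big(a \ra (b \ra c)\big) \ra \big(b \ra (a \ra c)\big) = 1$ and manipulate it using axiom 2) and axiom 1); this is the computational heart of the lemma, and I expect \textbf{part c) to be the main obstacle}, since the other parts either are direct substitutions or can be derived from c) and b). Part d), the identity $a \ra (b \ra c) = (a \ra b) \ra (a \ra c)$, I would handle by antisymmetry as well: the inequality $a \ra (b \ra c) \leq (a \ra b) \ra (a \ra c)$ is exactly axiom 2) read through the natural order, and for the converse I would combine c) with axiom 1).

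Finally, for the monotonicity statements in part e), assume $a \leq b$, i.e.\ $a \ra b = 1$. To show $c \ra a \leq c \ra b$, I would prove $(c \ra a) \ra (c \ra b) = 1$; by d) this term equals $c \ra (a \ra b) = c \ra 1$, and I would check separately that $c \ra 1 = 1$ (an immediate consequence of axiom 1) with $b \ra a$ replaced appropriately, or directly from a) and b)). For the antitone half $b \ra c \leq a \ra c$, I would show $(b \ra c) \ra (a \ra c) = 1$ by rewriting via c) and d) so that the hypothesis $a \ra b = 1$ can be substituted, reducing the expression to $1$. Throughout, the recurring technique is the same: translate $\leq$ into an equation with $1$ on the right, rewrite using axioms 1) and 2) and the already-proved parts, and close with antisymmetry axiom 3) whenever an equality (rather than a mere inequality) is required.
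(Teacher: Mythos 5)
The paper offers no proof of this lemma for you to be measured against: it is stated as known folklore, with the reader referred to Diego's monograph and to Bu\c{s}neag--Ghi\c{t}\v{a} (the references \cite{B,D} mentioned just before the statement). So your attempt must stand on its own, and while its overall architecture --- translate $\leq$ into equations ending in $=1$, rewrite with axioms 1) and 2), and close with the antisymmetry axiom 3) --- is the right one, there is a genuine gap, and it sits exactly at the place you dismiss as ``the first small obstacle.''

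Concretely: your substitution $b := a \ra a$, $c := a$ in axiom 2), followed by two applications of axiom 1), yields only $1 \ra (1 \ra (a \ra a)) = 1$, and there the derivation stalls. To strip the prefixed $1 \ra$'s you need either $1 \ra x = x$ --- which is your part b), proved later \emph{from} a), so this is circular --- or the algebraic modus ponens (from $s = 1$ and $s \ra t = 1$ infer $t = 1$). Modus ponens is not free either: by axiom 3) it requires $t \ra 1 = 1$, i.e.\ that $1$ is the top element of the natural order, a fact your plan never establishes (you invoke $c \ra 1 = 1$ only inside part e), long after it is needed, and justify it ``from a) and b)''). The standard repair is to prove $x \ra 1 = 1$ first, by pure rewriting: axiom 1) gives $1 \ra (x \ra 1) = 1$; rewriting the rightmost $1$ of $(x \ra 1) \ra 1$ with this identity turns it into $(x \ra 1) \ra \bigl(1 \ra (x \ra 1)\bigr)$, again an instance of axiom 1), hence equal to $1$; axiom 3) then yields $x \ra 1 = 1$, modus ponens follows, and your derivations of a) and b) go through as written. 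A secondary tangle: the converse inequality in d) (``combine c) with axiom 1)'') and any manageable proof of c) both tacitly use transitivity of $\leq$ and the antitonicity half of e), which you propose to prove only afterwards from c) and d); this is repairable by establishing transitivity and both monotonicity statements of e) immediately after b). With those pieces in place the plan is sound, but as written it both omits the real bootstrapping lemma and mislocates the difficulty: once modus ponens and monotonicity are available, part c) is routine rather than the main obstacle.
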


We denote the category of implicative semilattices (i.e., of the
implicative semilattices with their algebraic homomorphisms) by
$\IS$. Notice that if $(H,\we,\ra,1) \in \IS$,  then $(H,\ra,1)\in
\Hil$. \vspace{3pt}

Esakia considered in \cite{Esakia} the modalized Heyting calculus
$mHC$, which consists of an augmentation of the Heyting
propositional calculus for intuitionistic logic by a very special
modal operator. The algebraic models of this calculus are the
Heyting algebras augmented with a frontal operator. Let
$(H,\we,\vee,\ra,0,1)$ be a Heyting algebra. A map $\tau:H\ra H$
is said to be a \emph{frontal operator} if the following
conditions are satisfied for every $a,b \in H$:
\begin{description}
\item[(f1)] $\tau(a\wedge b) = \tau (a) \wedge \tau (b)$,
\item[(f2)] $a\leq \tau (a)$, \item[(f3)] $\tau (a)\leq b \vee
(b\ra a)$.
\end{description}
One of the main motivations to study frontal operators in Heyting
algebras stemmed from topological semantics in which $\tau$ is
interpreted as the co-derivative operator. In what follows we will
specify this point.

Let $X$ be a topological space. The derivative operator $\delta$
of $X$ is the map that sends every subset of $A$ to its set of
acummulation points. Its dual is the co-derivative operator $\tau$
defined by setting for every $A \subseteq X$, $\tau(A) =
\delta(A^{c})^{c}$. In \cite{Esakia} the elements of $\tau(A)$
were called the frontal points of $A$, that is,  $x \in X$ is a
frontal point of $A$ if and only if there is a neighborhood $U$ of
$x$ such that $U\subseteq A \cup \{x\}$. When $\tau$ is applied to
an open set provides an open set. In \cite{Esakia} Esakia also
showed that if $X$ is a topological space, then the co-derivative
operator restricted to $\mathcal{O}(X)$ is a frontal operator of
the Heyting algebra $\mathcal{O}(X)$.

We present now an interesting characterization of the co-derivatie
operator of the Heyting algebra of the open sets of a topological
space. Recall that  a point $x$ of a topological space $X$ is an
isolated  point of a set $A \subseteq X$ if there exists a
neighbourhood $U_x$ of $x$ such that $U_x \cap A = \{x\}$. We
denote the set of isolated points of $A$ by $A_a$.

\begin{prop}\label{pmot}
Let $X$ be a topological space and $\tau$ its co-derivative
operator. Then for every $U\in \mathcal{O}(X)$, $\tau(U) = U \cup
(U^c)_a$, that is, for every point $x \in X$, $x$ is a frontal
point of $U$ if and only if $x \in U$ or $x$ is an isolated point
of $U^{c}$.
\end{prop}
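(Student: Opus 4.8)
The plan is to prove the set-theoretic identity $\tau(U) = U \cup (U^c)_a$ by unwinding both sides through the definitions, and then to read off the pointwise characterization as an immediate corollary. Recall that by definition $\tau(U) = \delta(U^c)^c$, where $\delta$ is the derivative (set of accumulation points) operator. So the equality to establish is
\[
\delta(U^c)^c = U \cup (U^c)_a.
\]
Taking complements of both sides, this is equivalent to
\[
\delta(U^c) = \big(U \cup (U^c)_a\big)^c = U^c \cap \big((U^c)_a\big)^c = U^c \setminus (U^c)_a,
\]
using that $U \cap (U^c)_a = \emptyset$ since isolated points of $U^c$ lie in $U^c$. So the heart of the matter reduces to the purely topological fact that for any set $A$ (here $A = U^c$), the accumulation points of $A$ are exactly the points of $A$ that are not isolated points of $A$, i.e. $\delta(A) = A \setminus A_a$.

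First I would verify this last identity directly. A point $x$ is an accumulation point of $A$ when every neighborhood of $x$ meets $A$ in a point other than $x$; a point $x \in A$ is isolated in $A$ when some neighborhood $U_x$ satisfies $U_x \cap A = \{x\}$. I would show $\delta(A) \subseteq A \setminus A_a$ by noting that an accumulation point cannot be isolated (isolation contradicts every neighborhood meeting $A$ off $x$), and that an accumulation point must lie in $A$ \textbf{only if} we are in the setting where this holds. Here is the one subtle point and likely the main obstacle: in general $\delta(A)$ need not be contained in $A$, so the identity $\delta(A) = A \setminus A_a$ is false for arbitrary $A$. The identity is rescued precisely because $A = U^c$ is \emph{closed} (as $U$ is open), and for a closed set $A$ one has $\delta(A) \subseteq \overline{A} = A$. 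Thus the openness of $U$ is exactly what I must invoke, and I expect this to be the step that needs care.

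With $A = U^c$ closed, I would argue as follows. For the inclusion $\delta(A) \subseteq A \setminus A_a$: if $x \in \delta(A)$ then $x \in \overline{A} = A$ (closedness), and $x$ is not isolated in $A$ since every neighborhood meets $A$ at a point $\neq x$; hence $x \in A \setminus A_a$. Conversely, if $x \in A \setminus A_a$, then $x \in A$ and $x$ is not isolated, so no neighborhood $U_x$ satisfies $U_x \cap A = \{x\}$; since $x \in A$, every neighborhood already contains $x$, so the failure of isolation forces each neighborhood to contain some point of $A$ distinct from $x$, giving $x \in \delta(A)$. This establishes $\delta(A) = A \setminus A_a$, and taking complements yields $\tau(U) = U \cup (U^c)_a$ as required.

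Finally I would translate the set equality into the pointwise statement. A point $x$ belongs to $\tau(U)$ iff $x \in U \cup (U^c)_a$, which by definition of union is exactly the condition $x \in U$ or $x \in (U^c)_a$, i.e. $x$ is an isolated point of $U^c$. This is the claimed characterization of frontal points, completing the proof. The only nonroutine ingredient is the use of the closedness of $U^c$ to guarantee $\delta(U^c) \subseteq U^c$; everything else is a direct unwinding of the definitions of accumulation point and isolated point.
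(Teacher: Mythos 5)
Your proof is correct, but it is organized around a different pivot than the paper's. The paper never passes through the derivative operator $\delta$ in its argument: it works directly with the frontal-point characterization, namely that $x \in \tau(U)$ iff some neighborhood $V_x$ of $x$ satisfies $V_x \subseteq U \cup \{x\}$, and both inclusions are then two-line neighborhood manipulations (the identity $V_x = (V_x \cap U) \cup \{x\}$ does the work in one direction, and openness of $U$ enters because $U$ itself serves as the required neighborhood of any of its points). You instead complement everything and reduce the proposition to the standalone topological lemma that $\delta(A) = A \setminus A_a$ for \emph{closed} $A$, then translate back. The two arguments are complementation-duals of each other: your use of closedness of $U^c$ to get $\delta(U^c) \subseteq U^c$ is exactly the dual of the paper's step ``if $x \in U$, since $U$ is open, $x$ is a frontal point of $U$,'' both amounting to $U \subseteq \tau(U)$. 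What your route buys is a cleanly isolated, reusable lemma and an explicit flag of the genuine pitfall (the identity $\delta(A) = A \setminus A_a$ fails for non-closed $A$, so openness of $U$ is not decorative); what the paper's route buys is brevity, no complementation bookkeeping, and staying in the frontal-point language that it immediately reuses in the poset setting of Corollary \ref{pmots}.
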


\begin{proof}
Let $x\in \tau (U)$. Suppose that $x\notin U$. We will see that $x
\in (U^c)_a$. Since $x\in \tau(U)$ then there exists a
neighborhood $V_x$ of $x$ such that $V_x\subseteq U\cup\{x\}$.
Thus, $V_x \cap U^c =\{x\}$. Hence, $x \in (U^c)_a$. Conversely,
let $x\in U\cup (U^c)_a$. If $x \in U$, since $U$ is open, $x$ is a
frontal point of $U$. If $x \in (U^{c})_a$, there exists a
neighborhood $V_x$ of $x$ such that $V_{x}\cap U^c=\{x\}$. Hence,
$V_{x} = V_{x}\cap(U\cup U^c) =(V_{x}\cap U)\cup \{x\}\subseteq
U\cup \{x\}.$ Therefore, $x$ is a frontal point of $U$.
\end{proof}

We can apply the proposition to the Heyting algebra of the upsets
of a poset. Let $(P,\leq)$ be a poset and $U\subseteq P$. We write
$U_M$ for the set of maximal elements of $U$ (note that this set
may be empty).

\begin{cor} \label{pmots}
Let $(P,\leq)$ be a poset and consider the topological space of
the upsets of $P$. Then $(U^c)_a = (U^c)_M$ for every $U\in P^+$.
Hence, the co-derivative frontal operator $\tau$ satisfies that
$\tau(U) = U \cup (U^c)_M$ for every $U \in P^{+}$.
\end{cor}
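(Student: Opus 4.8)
The plan is to exploit the one special feature of the upset topology that makes everything collapse to an order-theoretic condition: every point possesses a \emph{smallest} open neighbourhood. First I would observe that in the topological space whose open sets are the upsets in $P^+$, the minimal open neighbourhood of a point $x \in P$ is the principal upset $[x)$. Indeed, any upset containing $x$ must contain every $y$ with $x \leq y$, so $[x)$ is contained in every open neighbourhood of $x$, while $[x)$ is itself open. Consequently, for any $A \subseteq P$, a point $x$ is isolated in $A$ if and only if $[x) \cap A = \{x\}$: if some open $V_x$ witnesses isolation, then $[x) \subseteq V_x$ gives $[x) \cap A \subseteq V_x \cap A = \{x\}$, and since $x \in [x) \cap A$ we get equality; conversely $[x)$ is open, so the equality $[x) \cap A = \{x\}$ directly witnesses that $x$ is isolated.

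Next I would specialise to $A = U^c$. Since $U \in P^+$ is an upset, its relative complement $U^c$ is a downset. For $x \in U^c$ the set $[x) \cap U^c = \{\, y \in U^c : x \leq y \,\}$ equals $\{x\}$ precisely when no element of $U^c$ lies strictly above $x$, that is, precisely when $x$ is a maximal element of $U^c$. Combining this with the isolation criterion from the previous step yields $x \in (U^c)_a$ if and only if $x \in (U^c)_M$, which is the first claimed equality $(U^c)_a = (U^c)_M$.

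Finally, the formula for $\tau$ follows immediately by substituting this identity into Proposition \ref{pmot}: since $\tau(U) = U \cup (U^c)_a$ for every $U \in P^+$ and we have just shown $(U^c)_a = (U^c)_M$, we conclude $\tau(U) = U \cup (U^c)_M$. I do not anticipate any genuine obstacle; the whole argument is a direct unwinding of definitions. The single point deserving care is the verification that $[x)$ is the least open neighbourhood of $x$, since it is exactly this fact that forces isolation in this topology to coincide with being maximal in $U^c$ rather than merely with some weaker comparability condition.
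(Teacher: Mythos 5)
Your proposal is correct and follows essentially the same route as the paper: both rest on the observation that $[x)$ is the least open neighbourhood of $x$ in the upset topology (equivalently, that $V$ is a neighbourhood of $x$ iff $[x)\subseteq V$), use it to identify isolated points of $U^c$ with maximal points of $U^c$, and then invoke Proposition \ref{pmot}. No gaps; the only cosmetic difference is that you isolate the minimal-neighbourhood fact as an explicit criterion for isolation, whereas the paper verifies the two inclusions directly.
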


\begin{proof}
Straightforward computations show that if $x\in P$, then $U_x$ is a
neighborhood of $x$ in the topology of the upsets of $P$ if and
only if $[x) \subseteq U_x$. Using this fact we prove that
$(U^c)_a = (U^c)_M$ whenever $U\in P^+$. In order to do it, let
$U\in P^+$. Suppose that $x\in (U^c)_a$, so there exists a
neighborhood $U_x$ of $x$ such that $U_{x} \cap U^c = \{x\}$. Let
$x\leq y$ with $y\in U^c$. Then $y\in [x) \cap U^{c} \subseteq U_x
\cap U^c = \{x\}$, i.e., $y = x$. Therefore,  $x\in (U^c)_M$.
Conversely, let $x\in (U^c)_M$. Thus, $[x) \cap U^c = \{x\}$,
which implies that $x\in (U^c)_a$. Hence, $(U^c)_a = (U^c)_M$.
Finally, it follows from Proposition \ref{pmot} that  $\tau(U) = U
\cup (U^{c})_M$ for every $U \in P^{+}$.
\end{proof}

The following definition was introduced in \cite{CSM0} and
generalizes the definition of frontal operator given for Heyting
algebras to the Hilbert algebras setting.

\begin{defn}
Let $H\in \Hil$. We say that a map $\tau:H \to H$ is a
\emph{frontal operator} if it satisfies the following conditions
for every $a,b \in H$:
\begin{description}
\item[(i1)] $\tau(a\ra b) \leq \tau (a) \ra \tau (b)$, \item[(i2)]
$a\leq \tau(a)$, \item[(i3)] $\tau(a) \leq ((b\ra a)\ra b)\ra b$.
\end{description}
An algebra $(H,\tau)$ is a \emph{frontal Hilbert algebra} if $H$
is a Hilbert algebra and $\tau$ a frontal operator on it.
\end{defn}

We denote by $\FHil$ the algebraic category of frontal Hilbert
algebras (i.e., the morphisms are the algebra homomorphisms). In
every Hilbert algebra there exists at least one frontal operator
since the identity map meets the required conditions.

It turns out that a unary map $\tau$ on a Heyting algebra is a
frontal operator if and only if it satisfies the conditions
$\textbf{(i1)}$, $\textbf{(i2)}$,  and $\textbf{(i3)}$. This
explains why   frontal Hilbert algebras are a generalization of
frontal Heyting algebras.\footnote{In fact condition
$\textbf{(i1)}$ is equivalent to condition $\textbf{(f1)}$
(assuming that $\tau(1) = 1$) and condition $\textbf{(i3)}$
equivalent to condition \textbf{(f3)} (see \cite{CSM0,Esakia}).}

Let $A$ be an algebra. An $n$-ary function $f: A^{n} \ra A$  is
said to be \emph{compatible with a congruence} $\theta$ of $A$ if
$(a_{i}, b_{i}) \in \theta$ with $i= 1,...,n$ implies
$(f(a_{1},...,a_{n}), f(b_{1},...,b_{n}))\in \theta$. And it is
said to be a \emph{compatible function} of $A$ provided it is
compatible with all the congruences of $A$. The simplest examples
of compatible functions in an algebra $A$ are the polynomial
functions.\footnote{The notion of polynomial used here is simply
that from universal algebra (see \cite{Pix}).} Frontal operators
on Heyting algebras are necessarily compatible functions as well
as the frontal operators on Hilbert algebras are, as it was proved in
\cite{CSM0}.

\begin{defn}
An algebra $(H,\tau)$ is an \emph{frontal implicative semilattice}
if $H$ is an implicative semilattice and $\tau$ is a frontal
operator of its Hilbert algebra reduct.
\end{defn}

We denote by $\FIS$ the algebraic category of frontal implicative
semilattices. The following result holds as in the case of Heyting
algebras, and its proof is part of the folklore of the subject.

\begin{lem} \label{inf}
Let $H\in \IS$ and $\tau$ a unary operator on $H$. Then $\tau$ is
a frontal operator if and only if $\tau$ satisfies
$\mathbf{(i2)}$, $\mathbf{(i3)}$ and $\tau(a\we b) = \tau(a) \we
\tau(b)$ for every $a, b \in H$.
\end{lem}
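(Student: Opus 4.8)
The plan is to observe that both sides of the equivalence share the conditions $\mathbf{(i2)}$ and $\mathbf{(i3)}$, so the real content is to show that, for a unary map $\tau$ on an implicative semilattice $H$, condition $\mathbf{(i1)}$ may be traded for the multiplicativity identity $\tau(a\we b) = \tau(a)\we\tau(b)$. I would prove the two implications separately, and in each I would first extract the monotonicity of $\tau$, which is the workhorse of the argument.

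For the direction from frontal operator to multiplicativity, I would first note that $\mathbf{(i2)}$ forces $\tau(1) = 1$, since $1\leq \tau(1)$ and $1$ is the greatest element. Applying $\mathbf{(i1)}$ to a pair $a\leq b$ (so that $a\ra b = 1$) then gives $1 = \tau(1) \leq \tau(a)\ra\tau(b)$, that is $\tau(a)\leq\tau(b)$; hence $\tau$ is monotone. Monotonicity applied to $a\we b\leq a$ and $a\we b\leq b$ yields $\tau(a\we b)\leq\tau(a)\we\tau(b)$. For the reverse inequality I would use the residuation fact $a\leq b\ra(a\we b)$, apply monotonicity to obtain $\tau(a)\leq\tau(b\ra(a\we b))$, and then invoke $\mathbf{(i1)}$ to get $\tau(b\ra(a\we b))\leq\tau(b)\ra\tau(a\we b)$; chaining these and residuating produces $\tau(a)\we\tau(b)\leq\tau(a\we b)$, which completes this direction.

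For the converse, since $\mathbf{(i2)}$ and $\mathbf{(i3)}$ are already assumed, only $\mathbf{(i1)}$ needs to be produced. Here the multiplicativity identity alone already delivers monotonicity: if $x\leq y$ then $x = x\we y$, whence $\tau(x) = \tau(x)\we\tau(y)\leq\tau(y)$. To establish $\mathbf{(i1)}$ I would compute $\tau(a\ra b)\we\tau(a) = \tau\big((a\ra b)\we a\big)$ by multiplicativity, use the evaluation inequality $(a\ra b)\we a\leq b$, and apply monotonicity to conclude $\tau\big((a\ra b)\we a\big)\leq\tau(b)$; residuating the resulting inequality $\tau(a\ra b)\we\tau(a)\leq\tau(b)$ then gives $\tau(a\ra b)\leq\tau(a)\ra\tau(b)$, which is exactly $\mathbf{(i1)}$.

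The whole argument is a short sequence of residuation manipulations, so I do not anticipate a genuine obstacle; the one point requiring care is the source of monotonicity, which is $\mathbf{(i1)}$ together with $\tau(1)=1$ in the first direction but the multiplicativity identity itself in the second. The two residuation facts used, namely $a\leq b\ra(a\we b)$ and $(a\ra b)\we a\leq b$, are both immediate from the defining adjunction $a\we b\leq c \Leftrightarrow a\leq b\ra c$ of an implicative semilattice.
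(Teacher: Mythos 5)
Your proposal is correct, and every step checks out: in the forward direction, $\tau(1)=1$ follows from $\mathbf{(i2)}$, monotonicity follows from $\mathbf{(i1)}$ applied to $a\to b = 1$, and the two residuation facts $a\leq b\to(a\we b)$ and $(a\to b)\we a\leq b$ do exactly the work you assign them; in the converse direction, multiplicativity yields monotonicity and then $\tau(a\to b)\we\tau(a)=\tau((a\to b)\we a)\leq\tau(b)$ residuates to $\mathbf{(i1)}$. Note that the paper itself gives no proof of this lemma at all --- it states that the result ``holds as in the case of Heyting algebras, and its proof is part of the folklore of the subject'' --- so your argument is not an alternative to the paper's proof but rather a complete write-up of precisely the folklore argument the authors are invoking.
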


\subsection{An adjunction between $\Hil$ and $\IS$}

The forgetful functor from the category of implicative
semilattices to the category of Hilbert algebras that forgets the
meet operation has a left adjoint. This amounts to the existence
of the free implicative semilattice extension of any Hilbert
algebra. There are several ways to obtain such a left adjoint.
Explicit descriptions of such an adjoint functor and of the free
extension of a Hilbert algebra to an implicative semilattice are
obtained in \cite{CJ2} and \cite{CSM}.

For completeness of the exposition, we provide now a description
of an adjoint functor to the forgetful functor from $\IS$ to
$\Hil$, which will be used later to obtain our results. We give
complete proofs using only the minimum tools needed to obtain the
results, thus avoiding more general approaches such that those in
\cite{CJ2,CSM}.

We start with some preliminary definitions and results.

It is immediate that if $(P, \leq)$ is a poset and $U, V \in
P^{+}$, then the following condition is satisfied for every $x \in
P$:
\[
x \in U \Rightarrow Y \quad \Longleftrightarrow \quad (\forall y \in P)(x \leq y \text{ and } y \in U \Longrightarrow y \in V).
\]

In the general study of Hilbert algebras, the notion of
implicative filter plays an important role. Let $H\in \Hil$. A set
$F\subseteq H$ is said to be an \emph{implicative filter} if $1\in
F$ and $b\in F$ whenever $a\in F$ and $a\ra b \in F$ for all $a, b
\in H$. If $F$ is a proper subset of $H$, then we say that the
implicative filter $F$ is \emph{proper}. It is immediate that
every implicative filter of a Hilbert algebra is an upset w.r.t.\
the natural order. We denote by $\Fil(H)$ the set of all
implicative filters of $H$.

Note that the set of all implicative filters of $H$ is closed
under intersections of arbitrary families, therefore it is a
complete lattice under the inclusion order. Thus, we can speak of
the implicative filter generated by a set. Let  $H\in \Hil$ and $X
\subseteq H$. We denote the implicative filter generated by $X$,
i.e., the least filter of $H$ that contains the set $X$, by
$F(X)$. There is an explicit description for $F(X)$ (see
\cite[Lemma 2.3]{Bu}):
\begin{align*}
F(X)   =  & \ \{b \in H: b = 1 \text{ or } a_1\ra (a_2\ra \cdots (a_n \ra b) \ldots)=1\\ & \ \text{for some}\; a_{1}, \ldots,a_n\in X\}.
\end{align*}
The lattice $\Fil(H)$ is known to be distributive \cite{D}.

The next fact on implicative filters, proved in \cite[Theorem
3.2]{Cel}, will be used several times along the paper. Let
$f:H_1\ra H_2$ be a function between Hilbert algebras. Then the
following two conditions are equivalent:
\begin{enumerate}
\item $f(1) = 1$ and $f(a\ra b) \leq f(a)\ra f(b)$ for every $a,b
\in H_1$. \item $f^{-1}[F]$ is an implicative filter of $H_1$
whenever $F$ is an implicative filter of $H_2$.
\end{enumerate}
In particular (2) holds when $f$ is a homomorphism.
\vspace{1pt}

An implicative filter $F$ of a Hilbert algebra $H$ is
\textit{irreducible} if it is an irreducible element of the
lattice of the implicative filters of $H$, i.e., if $F$ is proper
and for any implicative filters $F_1, F_2$ of $H$ such that $F =
F_1 \cap F_2$ we have  $F = F_1$ or $F = F_2$. We denote by
$\X(H)$ the set of irreducible implicative filters of $H$, as well
as the poset we obtain ordering it by the inclusion relation.
\vspace{1pt}

For a proof of the following lemma see \cite{D}.

\begin{lem} \label{SC}
Let $H\in \Hil$ and $F\in \Fil(H)$. Then $F\in \X(H)$ if and only
if for every $a,b\notin F$ there exists $c\notin F$ such that
$a\leq c$ and $b\leq c$.
\end{lem}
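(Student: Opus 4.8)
The plan is to prove the two implications separately, using throughout that every implicative filter is an upset for the natural order, the description of generated filters recalled above, and the basic identities for $\to$ stated earlier, namely $a\to(b\to c)=b\to(a\to c)$, $a\to(b\to c)=(a\to b)\to(a\to c)$, together with the axiom $x\to(y\to x)=1$.

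For the implication from the directedness condition to irreducibility I would argue directly, reading the hypothesis together with the assumption that $F$ is proper (this must be read into the statement, since for $F=H$ the condition holds vacuously while $H\notin\X(H)$). Suppose $F=F_{1}\cap F_{2}$ with $F_{1},F_{2}\in\Fil(H)$ and $F\neq F_{1}$; I must show $F=F_{2}$. As $F\subseteq F_{2}$ always holds, assume for contradiction that there is $b\in F_{2}\setminus F$, and pick $a\in F_{1}\setminus F$. The hypothesis applied to $a,b\notin F$ yields $c\notin F$ with $a\leq c$ and $b\leq c$. Since $F_{1}$ and $F_{2}$ are upsets containing $a$ and $b$ respectively, $c\in F_{1}\cap F_{2}=F$, contradicting $c\notin F$. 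Hence $F=F_{2}$, so $F$ is irreducible.

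For the converse I would prove the contrapositive. Assume the directedness condition fails: there exist $a,b\notin F$ such that every common upper bound of $a$ and $b$ lies in $F$. I will exhibit a nontrivial decomposition of $F$. First I would record that for any $x\in H$ one has $F(F\cup\{x\})=\{d\in H: x\to d\in F\}$; this set is readily checked to be a filter (using $a\to(b\to c)=(a\to b)\to(a\to c)$), to contain both $F$ (as $f\leq x\to f$) and $x$, and to be the least such filter. Put $F_{1}:=F(F\cup\{a\})$ and $F_{2}:=F(F\cup\{b\})$, so that $a\in F_{1}\setminus F$ and $b\in F_{2}\setminus F$, whence $F_{1}\neq F\neq F_{2}$. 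It then remains to show $F_{1}\cap F_{2}=F$; the inclusion $F\subseteq F_{1}\cap F_{2}$ is clear, so I take $d$ with $a\to d\in F$ and $b\to d\in F$ and aim to prove $d\in F$. The crux is the element
\[
w:=(a\to d)\to\bigl((b\to d)\to d\bigr).
\]
Using the Hilbert-algebra identities one checks that $a\leq w$ and $b\leq w$, so $w$ is a common upper bound of $a$ and $b$ and therefore lies in $F$ by the failure hypothesis. Since $a\to d\in F$ and $w\in F$, filter closure gives $(b\to d)\to d\in F$, and then $b\to d\in F$ gives $d\in F$. Thus $F=F_{1}\cap F_{2}$ with $F\neq F_{1},F_{2}$, so $F$ is reducible, completing the contrapositive.

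The main obstacle is the verification that $w$ is a common upper bound of $a$ and $b$: computing $a\to w$ and $b\to w$ requires a careful but routine sequence of applications of $a\to(b\to c)=b\to(a\to c)$ and $x\to(y\to x)=1$, and keeping the bookkeeping straight is the only delicate point, everything else being standard filter manipulation. A secondary point worth flagging is the properness caveat in the backward direction noted above.
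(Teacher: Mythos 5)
Your proof is correct, but note that the paper never proves this lemma at all: it is stated with the remark ``For a proof of the following lemma see \cite{D}'', so there is no internal argument to compare against, and what you have produced is a self-contained proof of a fact the paper imports from Diego. The easy direction (directedness plus properness implies irreducibility) is the standard upset argument and is fine. The substantive direction is your contrapositive, and your route differs from the one usually given in the literature (and from the machinery the paper itself relies on elsewhere): the classical argument notes that if every common upper bound of $a,b$ lies in $F$ then $[a)\cap[b)\subseteq F$, hence $F=F\vee([a)\cap[b))=(F\vee[a))\cap(F\vee[b))$ by \emph{distributivity} of the lattice $\Fil(H)$ (Diego's Theorem 6) --- this is exactly the computation the paper performs inside its proof of Lemma \ref{ps3}. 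You instead avoid citing distributivity: you prove the deduction-theorem description $F(F\cup\{x\})=\{d\in H: x\to d\in F\}$ and then verify $F=F(F\cup\{a\})\cap F(F\cup\{b\})$ by hand via the element $w=(a\to d)\to((b\to d)\to d)$, which amounts to proving the needed instance of distributivity from scratch. That is a perfectly valid and arguably more elementary route. The one step you left as ``routine'' does check out: by the exchange law $x\to(y\to z)=y\to(x\to z)$ one gets $a\to w=(a\to d)\to\bigl((b\to d)\to(a\to d)\bigr)=1$ by the axiom $x\to(y\to x)=1$, and $b\to w=(a\to d)\to\bigl((b\to d)\to(b\to d)\bigr)=(a\to d)\to 1=1$, so $w$ is indeed a common upper bound of $a$ and $b$, and the modus ponens chain $w,\ a\to d\in F\ \Rightarrow\ (b\to d)\to d\in F\ \Rightarrow\ d\in F$ closes the argument. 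Your caveat about properness is also well taken: for $F=H$ the directedness condition holds vacuously while $H\notin\X(H)$, so properness must be read into the backward implication as stated.
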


Let $H\in \Hil$. A set $I \subseteq H$ is an \emph{order-ideal} if
$I$ is an order-ideal w.r.t.\ the natural order. We denote by
$\Id(H)$  the set of order-ideals of $H$ w.r.t.\ the natural order.
The following lemma is \cite[Theorem 2.6]{Cel}.

\begin{lem}\label {tfp}
Let $H\in \Hil$. Let $F\in \Fil(H)$ and $I \in \Id(H)$ be such
that $F\cap I = \emptyset$. Then there exists $P\in \X(H)$ such
that $F\subseteq P$ and $P\cap I = \emptyset$.
\end{lem}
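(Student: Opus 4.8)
The plan is to obtain $P$ as a maximal implicative filter that contains $F$ and avoids $I$, via a Zorn's Lemma argument, and then to verify that maximality together with the up-directedness of $I$ forces $P$ to be irreducible. First I would consider the family
\[
\mathcal{S} = \{G \in \Fil(H) : F \subseteq G \text{ and } G \cap I = \emptyset\},
\]
partially ordered by inclusion. It is nonempty because $F$ itself belongs to it by hypothesis. I would then check that $\mathcal{S}$ is closed under unions of chains: the union of a chain of implicative filters is again an implicative filter (it contains $1$, and if $a$ and $a\ra b$ lie in the union they both lie in a common member of the chain, whence $b$ does too), it contains $F$, and it remains disjoint from $I$ since each member is. Hence every chain in $\mathcal{S}$ has an upper bound in $\mathcal{S}$, and Zorn's Lemma yields a maximal element $P \in \mathcal{S}$. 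By construction $F \subseteq P$ and $P \cap I = \emptyset$.

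It remains to show $P \in \X(H)$. Since order-ideals are nonempty, $I \neq \emptyset$, and $P \cap I = \emptyset$ gives that $P$ is a proper implicative filter. To see that $P$ is irreducible, I would argue directly from the lattice-theoretic definition: suppose $P = F_1 \cap F_2$ for implicative filters $F_1, F_2$ with $P \neq F_1$ and $P \neq F_2$. Then $P \subsetneq F_1$ and $P \subsetneq F_2$, so neither $F_1$ nor $F_2$ belongs to $\mathcal{S}$ by the maximality of $P$; since both contain $F$, the only way they can fail to be in $\mathcal{S}$ is that each meets $I$. Choose $u \in F_1 \cap I$ and $v \in F_2 \cap I$.

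Here is the key step, and the place where the hypothesis that $I$ is an order-ideal (rather than merely a downset) is essential: by up-directedness of $I$ there is $c \in I$ with $u \leq c$ and $v \leq c$. Because implicative filters are upsets, $u \leq c$ gives $c \in F_1$ and $v \leq c$ gives $c \in F_2$, so $c \in F_1 \cap F_2 = P$. But $c \in I$, contradicting $P \cap I = \emptyset$. Therefore $P = F_1$ or $P = F_2$, so $P$ is irreducible and $P \in \X(H)$, as required. I expect the only genuinely substantive point to be this last contradiction: everything hinges on converting the two separate witnesses in $I$ into a single witness via up-directedness, after which the upset property of filters closes the argument. One could alternatively verify irreducibility through the criterion of Lemma~\ref{SC}, but the direct lattice argument is shorter.
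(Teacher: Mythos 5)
Your proof is correct and complete: the Zorn's Lemma construction, the verification that unions of chains of implicative filters are implicative filters, the properness of $P$ (which follows from $I \neq \emptyset$, guaranteed by the paper's definition of order-ideal), and the key use of up-directedness of $I$ to merge the two witnesses $u \in F_1 \cap I$ and $v \in F_2 \cap I$ into a single element of $P \cap I$ are all handled correctly. Note that the paper itself gives no proof of this lemma --- it cites \cite[Theorem 2.6]{Cel} --- and your argument is precisely the standard proof of that cited result, so your write-up simply supplies, correctly, the argument the paper delegates to the literature.
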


The following known results follow from  Lemma \ref{tfp}.

\begin{cor}\label{tfpc1}\label{tfpc2}\label{tfpc3}
Let $H\in \Hil$.
\begin{enumerate}[\normalfont 1)]
\item If  $F \in \Fil(H)$ and $a \notin F$, then  there exists
$P\in \X(H)$ such that $F\subseteq P$ and $a\notin P$. \item If
$a,b\in H$ are such that $a\nleq b$, then there exists $P\in
\X(H)$ such that $a\in P$ and $b\notin P$. \item If $F\in \Fil(H)$
and $a,b\in H$, then $a\ra b \notin F$ if and only if there exists
$P\in \X(H)$ such that $F\subseteq P$, $a\in P$ and $b\notin P$.
\end{enumerate}
\end{cor}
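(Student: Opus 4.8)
The plan is to prove all three items as consequences of Lemma~\ref{tfp}, choosing in each case an appropriate implicative filter $F$ and order-ideal $I$ with empty intersection, and then extracting the irreducible filter $P$ whose existence the lemma guarantees. The one subtlety to keep in mind is that the natural order on a Hilbert algebra need not yield a meet-semilattice, so the order-ideals I use must be verified to be genuine order-ideals (nonempty, down-directed downsets); the cleanest choice is a principal downset $(a]$, which is always an order-ideal because it is a downset containing $a$ and directed by $a$ itself.

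\begin{proof}
1) Suppose $F\in\Fil(H)$ and $a\notin F$. Since $F$ is an upset for the natural order and $a\notin F$, no element of $(a]$ lies in $F$: indeed if $x\le a$ and $x\in F$ then $a\in F$, a contradiction. Hence $F\cap (a]=\emptyset$. As $(a]$ is an order-ideal, Lemma~\ref{tfp} yields $P\in\X(H)$ with $F\subseteq P$ and $P\cap (a]=\emptyset$; in particular $a\notin P$.

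2) Suppose $a\nleq b$, i.e.\ $a\ra b\ne 1$. Let $F = [a) = \{x : a\le x\}$, the principal upset generated by $a$. This is an implicative filter: it clearly contains $1$, and it is closed under modus ponens because if $a\le x$ and $a\le (x\ra y)$ then $a\le a\ra y$ (as $x\ra y \le a \ra y$ when $a \le x$, using item e) of the basic lemma), whence $a\we$-reasoning gives $a\le y$; more directly, $[a)$ is the implicative filter $F(\{a\})$ generated by $a$. Since $a\nleq b$, we have $b\notin [a)=F$. Applying item~1) to this $F$ and this $b$ gives $P\in\X(H)$ with $[a)\subseteq P$ and $b\notin P$; then $a\in[a)\subseteq P$, as required.

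3) For the nontrivial direction, assume $a\ra b\notin F$. Consider the implicative filter $G = F(F\cup\{a\})$ generated by $F$ together with $a$. I claim $b\notin G$. Using the explicit description of the generated filter and the equivalence $(x\we y)\ra z \approx x\ra(y\ra z)$ (equivalently item c)/d) of the basic lemma for the iterated implications), membership $b\in G$ would force $f_1\ra(\cdots\ra(f_n\ra(a\ra b))\cdots)=1$ for some $f_1,\dots,f_n\in F$, i.e.\ $a\ra b$ would be deducible from $F$, contradicting $a\ra b\notin F$ since $F$ is a filter. Hence $b\notin G$, and applying item~1) to $G$ and $b$ produces $P\in\X(H)$ with $G\subseteq P$ and $b\notin P$; then $F\subseteq G\subseteq P$ and $a\in G\subseteq P$. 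Conversely, if such a $P$ exists with $F\subseteq P$, $a\in P$, $b\notin P$, then $a\ra b\notin F$: otherwise $a\ra b\in F\subseteq P$ together with $a\in P$ would give $b\in P$ by the filter property, a contradiction.
\end{proof}

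The step I expect to be the main obstacle is the claim in item~3) that $b\notin F(F\cup\{a\})$. The rest is bookkeeping with principal downsets and upsets, but here one must push the membership condition for the generated filter through the currying identity $f_1\ra(\cdots\ra(f_n\ra(a\ra b)))=1$ and recognize that this exactly says $a\ra b\in F$; getting the order of the $a$ and the $f_i$'s right, and invoking the correct basic Hilbert-algebra identities, is where care is needed.
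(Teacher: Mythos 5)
Your proof is correct and takes exactly the route the paper intends: the paper offers no written proof beyond the remark that these results ``follow from Lemma~\ref{tfp}'', and your choices---the principal downset $(a]$ as the order-ideal in item 1), the principal implicative filter $[a)=F(\{a\})$ in item 2), and the generated filter $F(F\cup\{a\})$ together with the exchange/contraction manipulation of iterated implications in item 3)---are the standard way of filling in that remark. One cosmetic slip: the phrase ``$a\we$-reasoning'' in item 2) is out of place, since a Hilbert algebra need not have meets, but your fallback identification $[a)=F(\{a\})$ via the explicit description of generated filters makes that step rigorous anyway.
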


The next lemma was proved in  \cite[Section 3, Lemma 16]{CSM0}. We
give the proof here for the sake of completeness.

\begin{lem}\label{irr-tau}
Let $(H,\tau)$ be a frontal Hilbert algebra and $P \in \X(H)$.
Then for every $a, b \in H$, if $\tau(a) \in P$ and $b \not \in
P$, then $b \to a \in P$.
\end{lem}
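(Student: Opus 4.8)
The plan is to argue by contradiction: assume $\tau(a)\in P$, $b\notin P$, and $b\to a\notin P$, and derive that some element forced to lie in $P$ also lies outside it. The engine of the argument will be the irreducibility of $P$ as captured by Lemma \ref{SC}. Since both $b\notin P$ and (by assumption) $b\to a\notin P$, that lemma furnishes a single element $c\notin P$ with $b\le c$ and $b\to a\le c$. The point of producing this common upper bound $c$ is that it packages the failure of $b\to a\in P$ into one element sitting above $P$'s ``forbidden'' elements, and I will feed $c$ (rather than $b$) into condition \textbf{(i3)}.

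The key computation is then short. From $b\le c$ and the antitonicity of $\to$ in its first argument (property (e) of the basic Hilbert-algebra lemma) I get $c\to a\le b\to a$, and combining this with $b\to a\le c$ yields $c\to a\le c$. Applying antitonicity once more to $c\to a\le c$ gives $1=c\to c\le (c\to a)\to c$, so $(c\to a)\to c=1$. Now instantiating \textbf{(i3)} with the element $c$ in place of $b$ gives $\tau(a)\le ((c\to a)\to c)\to c = 1\to c = c$, where the last equality uses $1\to x=x$ (property (b)). Since $\tau(a)\in P$ and every implicative filter is an upset for the natural order, it follows that $c\in P$, contradicting $c\notin P$. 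Hence $b\to a\in P$.

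I expect the main obstacle to be resisting the more obvious but doomed strategy of extending $P$: one is tempted to use Corollary \ref{tfpc3} to pass from $b\to a\notin P$ to an irreducible filter $Q\supseteq P$ with $b\in Q$ and $a\notin Q$, and then to seek a contradiction there. This route fails precisely because it discards the crucial hypothesis $b\notin P$ (the new filter $Q$ has $b\in Q$), and indeed a maximal point where $a$ is false and $b$ is true is perfectly consistent. The decisive move is therefore \emph{not} to enlarge $P$ but to stay inside it: one uses irreducibility only to manufacture the auxiliary witness $c\notin P$ above both $b$ and $b\to a$, and then exploits that $c\to a\le c$ makes the Peirce-like term in \textbf{(i3)} collapse to $c$ itself. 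Verifying that $(c\to a)\to c=1$ and that the collapse of \textbf{(i3)} indeed yields $\tau(a)\le c$ are the only places where care is needed, and both are immediate from the antitonicity property (e) and from $1\to x=x$.
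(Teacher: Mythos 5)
Your proof is correct and follows essentially the same route as the paper's: both argue by contradiction, use Lemma \ref{SC} to produce a single witness $c\notin P$ above $b$ and $b\ra a$, show $(c\ra a)\ra c=1$, and then instantiate \textbf{(i3)} at $c$ to get $\tau(a)\leq c$, contradicting $c\notin P$ since $\tau(a)\in P$ and $P$ is an upset. The only (immaterial) difference is the derivation of $(c\ra a)\ra c=1$: you use antitonicity (in fact $c\ra a\leq c$ already says $(c\ra a)\ra c=1$ by the definition of the natural order), whereas the paper invokes the Hilbert-algebra tautology $(b\ra c)\ra(((b\ra a)\ra c)\ra((c\ra a)\ra c))=1$.
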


\begin{proof}
Let $P\in \X(H)$, $\tau(a)\in P$, and $b\notin P$. Suppose that $b
\to a \not \in P$. Then by Lemma \ref{SC} there exists $c\notin P$
such that $b\leq  c $ and $b\ra a \leq c$. Thus, $b \to c = 1$ and
$(b\ra a) \ra c = 1$. Since $(b\ra c)\ra ((b\ra a) \ra c)\ra
((c\ra a)\ra c)) = 1$, then $(c\ra a)\ra c = 1$. From the fact
that $\tau$ is a frontal operator we have $\tau(a)\leq ((c\ra
a)\ra c)\ra c$, i.e., $\tau(a)\leq c$. Taking into account that
$\tau(a)\in P$ we conclude that $c\in P$, which is a
contradiction.
\end{proof}

Let $H\in \Hil$. We consider the poset $\X(H) = (\X(H),
\subseteq)$ and the complete lattice of its upsets $\X(H)^{+}$. We
define the map $\varphi_H: H \to \X(H)^{+}$ by setting for every
$a\in H$
\begin{equation} \label{var}
\varphi_H(a): = \{P\in \X(H): a\in P\}.
\end{equation}
This map is well-defined since it is immediate that $\varphi_H(a)$
is an upset of $\X(H)$. When the algebra $H$ is clear from the
context we will use $\varphi$ instead of $\varphi_H$. Corollary
\ref{tfpc2} implies that $\varphi$ is a one-to-one map. Hence,
$\varphi$  is an order embedding from the poset $(H,\leq)$, where
$\leq$ is the natural order of $H$, to the complete lattice poset
$(\X(H)^{+}, \subseteq)$; therefore, $((\X(H)^{+}, \subseteq),
\varphi)$ is a completion of $(H,\leq)$.

The operation $\Rightarrow$ on $\X(H)^{+}$, defined by condition
(\ref{imp}) in Section~\ref{Sec:Prelim}, is such that for every
$a, b \in H$
\[
\varphi(a) \Rightarrow \varphi(b) = \varphi(a \to b).
\]
Indeed, if $P \in \varphi(a \to b)$, $P \subseteq Q \in \X(H)$ and
$a \in Q$, then, since $Q$ is an implicative filter,  we have $b
\in Q$. It follows that $\varphi(a \to b) \subseteq \varphi(a)
\Rightarrow \varphi(b)$. Conversely, if $P \not \in \varphi(a \to
b)$, using Corollary \ref{tfpc3} there is $Q \in \X(H)$ such that
$P \subseteq Q$, $a \in Q$, and $b \not \in Q$. It follows then
that $P \not \in \varphi(a) \Rightarrow \varphi(b)$. Hence, we
obtain the other inclusion. Moreover $\varphi(1) = \X(H)$ and
therefore, the map  $\varphi$ is an embedding from $H$ to the
Hilbert algebra $(\varphi[H], \Rightarrow, \X(H))$, which is a
subalgebra of the $\{\to, 1\}$-reduct of the Heyting algebra of
the upsets of $\X(H)$.

\begin{rem}
The completion $((\X(H)^{+}, \subseteq), \varphi_H)$ of the poset
$(H, \leq)$ is  a $\Delta_1$-comple\-tion in the sense of
\cite{GeJaPa}. In \cite{Go} it is proved that it is indeed the
$(\Fil(H), \Id(H))$-completion of $(H, \leq)$ and that the
operation $\Rightarrow$ is the $\pi$-extension of the operation
$\to$ of $H$ to $\X(H)^{+}$.
\end{rem}

Using that $\X(H)^{+}$ is a Heyting algebra and the fact that
$\varphi$ is an embedding from $H$ to  $(\varphi[H], \Rightarrow,
\X(H))$, it is easy to see that for every $a_1, \ldots, a_n, b \in
H$,
\[
\varphi(a_1) \cap \cdots \cap \varphi(a_n) \Rightarrow \varphi(b) = \varphi(a_1 \to (\ldots (a_n \to b)\ldots)).
\]

Let $H\in \Hil$. We consider the bounded semilattice
$(\X(H)^{+}, \cap, \X(H))$ and the subalgebra generated by
$\varphi[H]$, which is, of course,  a bounded semilattice. We denote it, as
well as its domain, by $\langle \varphi[H]\rangle$. Since $\X(H) =
\varphi(1) \in \varphi[H]$, the elements of $\langle
\varphi[H]\rangle$ are the sets of the form
\[
U = \varphi(a_1) \cap \cdots \cap \varphi(a_n)
\]
for some $a_1, \ldots, a_n \in H$.

\begin{prop}\label{closure-L(H)-under-Rightarrow}
For every Hilbert algebra $H$ the set  $\langle \varphi[H]\rangle$
is closed under the operation $\Rightarrow$ of $\X(H)^{+}$.
\end{prop}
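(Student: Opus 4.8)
The plan is to reduce $U \Rightarrow V$, for arbitrary $U, V \in \langle \varphi[H]\rangle$, to a finite intersection of elements of $\varphi[H]$, using the two identities for $\Rightarrow$ established in the discussion immediately preceding the statement together with the fact that $\Rightarrow$ distributes over finite meets in the Heyting algebra $\X(H)^{+}$. First I would write the two elements in the explicit form recorded just before the proposition: $U = \varphi(a_1) \cap \cdots \cap \varphi(a_n)$ and $V = \varphi(b_1) \cap \cdots \cap \varphi(b_m)$ for some $a_i, b_j \in H$.

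The key structural step is to distribute the implication over the meet on the right. Since $\X(H)^{+}$ is a Heyting algebra, the binary identity $x \Rightarrow (y \we z) = (x \Rightarrow y) \we (x \Rightarrow z)$ holds (here $\we$ is $\cap$), and iterating it gives $U \Rightarrow (\varphi(b_1) \cap \cdots \cap \varphi(b_m)) = \bigcap_{j=1}^{m} \bigl(U \Rightarrow \varphi(b_j)\bigr)$. Then I would apply to each factor $U \Rightarrow \varphi(b_j)$ the second displayed identity preceding the statement, namely $\varphi(a_1) \cap \cdots \cap \varphi(a_n) \Rightarrow \varphi(b) = \varphi(a_1 \to (\cdots (a_n \to b)\cdots))$. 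This rewrites each factor as a single element of $\varphi[H]$, whence $U \Rightarrow V = \bigcap_{j=1}^{m} \varphi\bigl(a_1 \to (\cdots (a_n \to b_j)\cdots)\bigr)$, which is manifestly a finite intersection of elements of $\varphi[H]$ and therefore an element of $\langle \varphi[H]\rangle$.

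Finally I would dispose of the degenerate cases in which $U$ or $V$ is an empty intersection. These are harmless because the empty intersection equals $\X(H) = \varphi(1) \in \varphi[H]$: if $V = \X(H)$ then $U \Rightarrow V = \X(H) \in \langle \varphi[H]\rangle$, and if $U = \X(H)$ then $U \Rightarrow V = V$. I do not expect a genuine obstacle here; the whole argument is a direct combination of the two pre-established identities for $\Rightarrow$ with Heyting distributivity over finite meets, and the only point deserving a word of care is to record that every element of $\langle \varphi[H]\rangle$ really does have the claimed finite-intersection form (using $\X(H) = \varphi(1) \in \varphi[H]$), which is exactly what was observed immediately before the statement.
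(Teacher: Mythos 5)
Your proposal is correct and follows essentially the same route as the paper's proof: write $U$ and $V$ as finite intersections of elements of $\varphi[H]$, distribute $\Rightarrow$ over the meet on the right using that $\X(H)^{+}$ is a Heyting algebra, and then apply the identity $\varphi(a_1) \cap \cdots \cap \varphi(a_n) \Rightarrow \varphi(b) = \varphi(a_1 \to (\ldots (a_n \to b)\ldots))$ to each factor. Your extra remark about the degenerate empty-intersection cases via $\X(H) = \varphi(1)$ is a harmless refinement that the paper handles implicitly by including $\varphi(1)$ among the generators.
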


\begin{proof}
Let $U, V \in \langle \varphi[H]\rangle$. Assume that $U =
\varphi(a_1) \cap \cdots \cap \varphi(a_n)$ and $V =  \varphi(b_1)
\cap \cdots \cap \varphi(b_m)$, where $a_1, \ldots, a_n, b_1,
\ldots, b_m \in H$. Then
\[
U \Rightarrow V = \varphi(a_1) \cap \cdots \cap \varphi(a_n) \Rightarrow \varphi(b_1) \cap \cdots \cap \varphi(b_m).
\]
Using that $\X(H)^{+}$ is a Heyting algebra, we have
\[
U \Rightarrow V = \bigcap_{1 \leq i \leq m} \varphi(a_1) \cap \cdots \cap \varphi(a_n) \Rightarrow \varphi(b_i).
\]
Now, for every $1 \leq i \leq m$ we have
\[
\varphi(a_1) \cap \cdots \cap \varphi(a_n) \Rightarrow \varphi(b_i) = \varphi(a_1 \to (\ldots (a_n \to b_i)\ldots)).
\]
It follows that $U \Rightarrow V \in  \langle \varphi[H]\rangle$.
\end{proof}

The proposition implies that the algebra
\[
\FC(H) := (\langle \varphi[H]\rangle, \cap, \Rightarrow, \X(H))
\]
is an implicative semilattice.

\begin{lem}\label{exten-hom-Hilbert-1}
Let $h: H_1 \to H_2$ be a homomorphism of Hilbert algebras.
If $a_1, \ldots, a_n,$ $ b_1, \ldots, b_m \in H$ are such that
\[
\varphi_{H_1}(a_1) \cap \cdots \cap \varphi_{H_1}(a_n) \subseteq
\varphi_{H_1}(b_1) \cap \cdots \cap \varphi_{H_1}(b_m),
\]
 then
\[
\varphi_{H_2}(h(a_1)) \cap \cdots \cap \varphi_{H_2}(h(a_n))
\subseteq \varphi_{H_2}(h(b_1)) \cap \cdots \cap
\varphi_{H_2}(h(b_m)).
\]
\end{lem}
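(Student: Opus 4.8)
The plan is to translate the set-theoretic inclusions into purely algebraic identities in the language $\{\to, 1\}$, in which the homomorphism $h$ acts transparently, and then to translate back in $H_2$. The bridge is the identity established just before Proposition \ref{closure-L(H)-under-Rightarrow}, namely that for every $a_1, \ldots, a_n, b$ in a Hilbert algebra,
\[
\varphi(a_1) \cap \cdots \cap \varphi(a_n) \Rightarrow \varphi(b) = \varphi(a_1 \to (\ldots (a_n \to b)\ldots)),
\]
used together with the injectivity of $\varphi$ and the fact, recorded in the text, that $\varphi(1) = \X(H)$.

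First I would record the key reduction: for any $a_1, \ldots, a_n, b$ in a Hilbert algebra $H$,
\[
\varphi(a_1) \cap \cdots \cap \varphi(a_n) \subseteq \varphi(b) \quad \Longleftrightarrow \quad a_1 \to (\ldots (a_n \to b)\ldots) = 1.
\]
To justify this, I would observe that from the definition (\ref{imp}) of $\Rightarrow$, for arbitrary upsets $U, V \in \X(H)^{+}$ one has $U \subseteq V$ iff $U \cap V^{c} = \emptyset$ iff $(U \cap V^{c}] = \emptyset$ iff $U \Rightarrow V = (U \cap V^{c}]^{c} = \X(H)$. Taking $U = \varphi(a_1) \cap \cdots \cap \varphi(a_n)$ and $V = \varphi(b)$ and invoking the displayed identity, the inclusion becomes $\varphi(a_1 \to (\ldots (a_n \to b)\ldots)) = \X(H) = \varphi(1)$, which by injectivity of $\varphi$ is exactly $a_1 \to (\ldots (a_n \to b)\ldots) = 1$.

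Next I would split the hypothesis along the target intersection: the inclusion $\bigcap_i \varphi_{H_1}(a_i) \subseteq \bigcap_j \varphi_{H_1}(b_j)$ holds if and only if $\bigcap_i \varphi_{H_1}(a_i) \subseteq \varphi_{H_1}(b_j)$ for every $j = 1, \ldots, m$. By the reduction above, each of these is equivalent to the identity $a_1 \to (\ldots (a_n \to b_j)\ldots) = 1$ holding in $H_1$. At this point the homomorphism step is immediate: since $h$ preserves $\to$ and $1$, applying $h$ to each such identity yields $h(a_1) \to (\ldots (h(a_n) \to h(b_j))\ldots) = 1$ in $H_2$ for every $j$. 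Reading the reduction backwards in $H_2$ gives $\bigcap_i \varphi_{H_2}(h(a_i)) \subseteq \varphi_{H_2}(h(b_j))$ for every $j$, and intersecting over $j$ produces the desired conclusion.

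The argument is essentially routine once the reduction is in place; the step I would treat as the crux is the equivalence between the lattice inclusion over $\X(H)$ and the equational identity in $H$, since it is precisely what decouples the geometric statement over $\X(H_1)$ from the one over $\X(H_2)$---two entirely different posets---and lets the homomorphism carry the information at the purely equational level, where $h$ is well behaved. Everything else is bookkeeping about finite intersections and the preservation of $\to$ and $1$ by $h$.
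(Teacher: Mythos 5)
Your proof is correct, but it follows a genuinely different route from the paper's. The paper argues pointwise with filters: given $P \in \X(H_2)$ containing all the $h(a_i)$, it notes that $h^{-1}[P]$ is an implicative filter of $H_1$ (by the cited result of Celani on preimages of implicative filters under maps satisfying $f(1)=1$ and $f(a\to b)\leq f(a)\to f(b)$), and, assuming some $h(b_j) \notin P$, extends $h^{-1}[P]$ via the separation property (Corollary \ref{tfpc1}) to some $Q \in \X(H_1)$ omitting $b_j$; this $Q$ lies in $\bigcap_i \varphi_{H_1}(a_i)$ and so contradicts the hypothesis. You instead reduce the inclusion $\bigcap_i \varphi(a_i) \subseteq \varphi(b)$ to the equation $a_1 \to (\ldots(a_n \to b)\ldots) = 1$, using the displayed identity for $\Rightarrow$, the injectivity of $\varphi$, and $\varphi(1) = \X(H)$ --- all of which are indeed established in the paper before this lemma --- and then let $h$ transport the equations from $H_1$ to $H_2$. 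Your reduction step is sound: for upsets $U, V$ one has $U \subseteq V$ iff $U \Rightarrow V = (U \cap V^{c}]^{c} = \X(H)$, and the rest is as you say. Both arguments ultimately rest on the separation Lemma \ref{tfp} (yours indirectly, since the identity for $\Rightarrow$ was itself proved via Corollary \ref{tfpc3}), but yours makes transparent the conceptual reason the lemma holds: inclusions among finite meets of elements of $\varphi[H]$ are equationally encoded in the language $\{\to,1\}$, where any homomorphism acts trivially. The paper's proof, by contrast, needs neither the injectivity of $\varphi$ nor that identity; it stays entirely within the filter machinery, exhibiting directly how $\X(H_1)$ and $\X(H_2)$ interact through $h^{-1}$ --- the same mechanism that later underlies the definition of $\tau^{\pi}$.
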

\begin{proof}
Let $a_1, \ldots, a_n, b_1, \ldots, b_m \in H$ with
$\varphi_{H_1}(a_1) \cap \cdots \cap \varphi_{H_1}(a_n) \subseteq
\varphi_{H_1}(b_1) \cap \cdots \cap \varphi_{H_1}(b_m)$. Suppose
that $P \in \varphi_{H_2}(h(a_1)) \cap \cdots \cap
\varphi_{H_2}(h(a_n))$. Then $a_1, \ldots, a_n \in h^{-1}[P]$.
Suppose that there is $b_i$ with $1 \leq i \leq m$ such that
$h(b_i) \not \in P$. Then $b_i \not \in h^{-1}[P]$. Thus, there
exists $Q \in \X(H)$ such that $b_i \not \in Q$ and $h^{-1}[P]
\subseteq Q$. It follows that  $Q \in \varphi_{H_1}(a_1) \cap
\cdots \cap \varphi_{H_1}(a_n)$, and the assumption implies that
$b_i \in Q$, which is a contradiction. Therefore, $P \in
\varphi_{H_2}(h(b_1)) \cap \cdots \cap \varphi_{H_2}(h(b_m))$.
This conludes the proof.
\end{proof}

\begin{prop}\label{unique -hom-extension}
Let $h: H_1 \to H_2$ be a homomorphism of Hilbert algebras. Then
there exists a unique homomorphism $\hat{h}: \FC(H_1) \to
\FC(H_2)$ such that $\varphi_{H_2} \circ h = \hat{h} \circ
\varphi_{H_1}$, i.e., that makes the following diagram to commute:
\[
 \xymatrix{
   H_1 \ar[rr]^{\varphi_{H_1}} \ar[d]_{h} & & L(H_1)  \ar[d]^{\hat{h}}\\
   H_2 \ar[rr]^{\varphi_{H_2}} & &  L(H_2). \
   }
\]
\end{prop}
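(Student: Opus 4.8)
The plan is to define $\hat{h}$ on a typical element $U = \varphi_{H_1}(a_1) \cap \cdots \cap \varphi_{H_1}(a_n)$ of $\FC(H_1)$ by setting $\hat{h}(U) = \varphi_{H_2}(h(a_1)) \cap \cdots \cap \varphi_{H_2}(h(a_n))$, which visibly lands in $\langle \varphi_{H_2}[H_2]\rangle = \FC(H_2)$, and then to verify that this is a well-defined homomorphism, that it makes the diagram commute, and that it is the only such map. The first and most delicate point is well-definedness, since an element of $\FC(H_1)$ may admit several representations as a finite intersection of sets of the form $\varphi_{H_1}(a)$. This is exactly what Lemma \ref{exten-hom-Hilbert-1} settles: if $\varphi_{H_1}(a_1) \cap \cdots \cap \varphi_{H_1}(a_n) = \varphi_{H_1}(b_1) \cap \cdots \cap \varphi_{H_1}(b_m)$, reading the equality as two inclusions and applying the lemma to each yields $\varphi_{H_2}(h(a_1)) \cap \cdots \cap \varphi_{H_2}(h(a_n)) = \varphi_{H_2}(h(b_1)) \cap \cdots \cap \varphi_{H_2}(h(b_m))$, so the value of $\hat{h}(U)$ is independent of the chosen representation.

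Commutativity is then immediate: taking $n = 1$, for $a \in H_1$ we get $\hat{h}(\varphi_{H_1}(a)) = \varphi_{H_2}(h(a))$, so $\hat{h} \circ \varphi_{H_1} = \varphi_{H_2} \circ h$. For the homomorphism conditions, preservation of the top element follows from $\X(H_1) = \varphi_{H_1}(1)$ and $h(1) = 1$, giving $\hat{h}(\X(H_1)) = \varphi_{H_2}(1) = \X(H_2)$; and preservation of $\cap$ is clear from the definition, since concatenating the defining tuples of $U$ and $V$ represents $U \cap V$, and its image is $\hat{h}(U) \cap \hat{h}(V)$.

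The main computational step is preservation of $\Ra$, which I would handle using the explicit formulas already established. Writing $U = \varphi_{H_1}(a_1) \cap \cdots \cap \varphi_{H_1}(a_n)$ and $V = \varphi_{H_1}(b_1) \cap \cdots \cap \varphi_{H_1}(b_m)$, the computation in Proposition \ref{closure-L(H)-under-Rightarrow} gives $U \Ra V = \bigcap_{1 \leq i \leq m} \varphi_{H_1}(a_1 \to (\ldots (a_n \to b_i)\ldots))$. Applying $\hat{h}$ and using that $h$ commutes with the iterated implication yields $\hat{h}(U \Ra V) = \bigcap_{1 \leq i \leq m} \varphi_{H_2}(h(a_1) \to (\ldots (h(a_n) \to h(b_i))\ldots))$. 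On the other side, the same Heyting-algebra computation carried out in $\X(H_2)^{+}$, together with the identity $\varphi(a_1) \cap \cdots \cap \varphi(a_n) \Ra \varphi(b) = \varphi(a_1 \to (\ldots (a_n \to b)\ldots))$ recalled before Proposition \ref{closure-L(H)-under-Rightarrow}, gives $\hat{h}(U) \Ra \hat{h}(V) = \bigcap_{1 \leq i \leq m} \varphi_{H_2}(h(a_1) \to (\ldots (h(a_n) \to h(b_i))\ldots))$. The two expressions coincide, so $\hat{h}$ preserves $\Ra$.

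Finally, uniqueness follows because $\FC(H_1)$ is generated as a semilattice by $\varphi_{H_1}[H_1]$: any homomorphism $g \colon \FC(H_1) \to \FC(H_2)$ with $g \circ \varphi_{H_1} = \varphi_{H_2} \circ h$ is forced to satisfy $g(\varphi_{H_1}(a)) = \varphi_{H_2}(h(a))$ on generators, and since $g$ preserves $\cap$ it must agree with $\hat{h}$ on every intersection $\varphi_{H_1}(a_1) \cap \cdots \cap \varphi_{H_1}(a_n)$, hence on all of $\FC(H_1)$. Thus $g = \hat{h}$. I expect the only genuine subtlety to be the well-definedness, which is precisely why Lemma \ref{exten-hom-Hilbert-1} was isolated beforehand; everything else is a direct unwinding of the definitions and of the formulas for $\Ra$.
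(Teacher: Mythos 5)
Your proposal is correct and follows essentially the same route as the paper's own proof: the same definition of $\hat{h}$ on representatives, well-definedness via Lemma \ref{exten-hom-Hilbert-1}, preservation of $\Rightarrow$ through the formula $U \Rightarrow V = \bigcap_{1 \leq i \leq m} \varphi_{H_1}(a_1 \to (\ldots (a_n \to b_i)\ldots))$ from Proposition \ref{closure-L(H)-under-Rightarrow}, and uniqueness forced by agreement on the generators $\varphi_{H_1}[H_1]$ under meets. The only difference is cosmetic (you prove existence before uniqueness, and you spell out the two-inclusion reading of the equality in the well-definedness step, which the paper leaves implicit).
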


\begin{proof}
First we show that if such a homomorphism exists, it is unique.
Suppose that $f: \FC(H_1) \to \FC(H_2)$ is a homomorphism such
that $\varphi_{H_2} \circ h = f \circ \varphi_{H_1}$. Let $U =
\varphi_{H_1}(a_1) \cap \cdots \cap \varphi_{H_1}(a_n) \in \langle
\varphi_{H_1}[H_1]\rangle$ with $a_1, \ldots, a_n \in H_1$. Then
$f(U) = f(\varphi_{H_1}(a_1)) \cap \cdots \cap
f(\varphi_{H_1}(a_n)) = \varphi_{H_2}(h(a_1)) \cap \cdots \cap
\varphi_{H_2}(h(a_n))$. This implies that  if $f_1, f_2: \FC(H_1)
\to \FC(H_2)$ are homomorphism such that $\varphi_{H_2} \circ h =
f_1 \circ \varphi_{H_1}$ and $\varphi_{H_2} \circ h = f_2 \circ
\varphi_{H_1}$, it follows that for every $U \in \langle
\varphi_{H_1}[H_1]\rangle$, $f_1(U) = f_2(U)$.

Now we prove the existence. We define $\hat{h} : \FC(H_1) \to
\FC(H_2)$ by setting for every $U \in \langle
\varphi_{H_1}[H_1]\rangle$:
\[
\hat{h}(U) = \varphi_{H_2}(h(a_1)) \cap \cdots \cap
\varphi_{H_2}(h(a_n)),
\]
where $a_1 \ldots, a_n \in H_1$ are such that $U =
\varphi_{H_1}(a_1) \cap \cdots \cap \varphi_{H_1}(a_n)$. Lemma
\ref{exten-hom-Hilbert-1} implies that  the map $\hat{h}$ is well
defined. Note that in particular $\hat{h}(\varphi_{H_1}(a)) =
\varphi_{H_2}(h(a))$. It is immediate to see that  $\varphi_{H_2}
\circ h = \hat{h} \circ \varphi_{H_1}$, that for all $U, V \in
\X(H)$ it holds that $\hat{h}(U \cap V) = \hat{h}(U) \cap
\hat{h}(V)$ and that $\hat{h}(\X(H_1)) = \X(H_2)$. It remains to
show that $\hat{h}(U \Rightarrow V) = \hat{h}(U) \Rightarrow
\hat{h}(V)$ for all $U, V \in \X(H)$. Suppose that $U =
\varphi_{H_1}(a_1) \cap \cdots \cap \varphi_{H_1}(a_n)$ and $V =
\varphi_{H_1}(b_1) \cap \cdots \cap \varphi_{H_1}(b_m)$. Reasoning
as in the proof of Proposition \ref{closure-L(H)-under-Rightarrow}
we have
\[
U \Rightarrow V = \bigcap_{1 \leq i \leq m} \varphi_{H_1}(a_1 \to (\ldots (a_n \to b_i)\ldots)).
\]
Hence, $\hat{h}(U \Rightarrow V) = \bigcap_{1 \leq i \leq m}\varphi_{H_2}(h(a_1 \to (\ldots (a_n \to b_i)\ldots)))$. It easily follows that
\[
\hat{h}(U \Rightarrow V) = \bigcap_{1 \leq i \leq m} \varphi_{H_2}(h(a_1)) \cap \cdots \cap \varphi_{H_2}(h(a_n)) \Rightarrow \varphi_{H_2}(h(b_i)).
\]
The last expression is equal to $\hat{h}(U) \Rightarrow
\hat{h}(V)$. We conclude that $\hat{h}$ is the desired
homomorphism.
\end{proof}

Using the results above the next proposition easily follows.

\begin{prop} \label{propfun}
The assignments $H\mapsto \FC(H)$ and $h\mapsto \hat{h}$ define a
functor $\f:\Hil \ra \IS$.
\end{prop}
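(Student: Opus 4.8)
The plan is to note that the bulk of the work is already done and that only the two functoriality axioms remain. Indeed, Proposition \ref{closure-L(H)-under-Rightarrow} together with the paragraph following it guarantees that $\FC(H)$ is an implicative semilattice for every $H \in \Hil$, so the object assignment $H \mapsto \FC(H)$ is well defined; and Proposition \ref{unique -hom-extension} guarantees that for each Hilbert algebra homomorphism $h : H_1 \to H_2$ the map $\hat{h} : \FC(H_1) \to \FC(H_2)$ is a well-defined homomorphism of implicative semilattices. Thus the only things left to verify are that the morphism assignment preserves identities and composition, and for both I would exploit the \emph{uniqueness} clause of Proposition \ref{unique -hom-extension} rather than argue by direct computation.

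For the preservation of identities, fix $H \in \Hil$ and consider the identity homomorphism $\Id_H : H \to H$. By Proposition \ref{unique -hom-extension}, $\widehat{\Id_H}$ is the unique homomorphism $\FC(H) \to \FC(H)$ satisfying $\varphi_H \circ \Id_H = \widehat{\Id_H} \circ \varphi_H$, that is, $\varphi_H = \widehat{\Id_H} \circ \varphi_H$. Since $\Id_{\FC(H)}$ trivially satisfies $\varphi_H = \Id_{\FC(H)} \circ \varphi_H$, uniqueness forces $\widehat{\Id_H} = \Id_{\FC(H)}$. (Alternatively, one reads this off directly from the defining formula for $\hat{h}$, as $\widehat{\Id_H}(\varphi_H(a_1) \cap \cdots \cap \varphi_H(a_n)) = \varphi_H(a_1) \cap \cdots \cap \varphi_H(a_n)$.)

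For the preservation of composition, take homomorphisms $h : H_1 \to H_2$ and $g : H_2 \to H_3$ in $\Hil$. The composite $\hat{g} \circ \hat{h} : \FC(H_1) \to \FC(H_3)$ is a homomorphism of implicative semilattices, being a composition of two such homomorphisms. Chaining the two commuting squares of Proposition \ref{unique -hom-extension}, I compute
\[
(\hat{g} \circ \hat{h}) \circ \varphi_{H_1} = \hat{g} \circ (\varphi_{H_2} \circ h) = (\varphi_{H_3} \circ g) \circ h = \varphi_{H_3} \circ (g \circ h).
\]
Hence $\hat{g} \circ \hat{h}$ satisfies the very equation that, again by Proposition \ref{unique -hom-extension}, characterizes $\widehat{g \circ h}$ uniquely, and therefore $\widehat{g \circ h} = \hat{g} \circ \hat{h}$.

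Both functoriality conditions holding, the assignments define a functor $\f : \Hil \to \IS$. I do not expect any genuine obstacle here: the substantive content has already been absorbed into the existence-and-uniqueness statement of Proposition \ref{unique -hom-extension}, and the only care required is to keep the direction of the composites straight when composing the two commuting squares.
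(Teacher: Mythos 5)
Your proof is correct and fills in exactly the argument the paper leaves implicit: the paper offers no explicit proof, stating only that the proposition ``easily follows'' from Proposition \ref{unique -hom-extension}, and your use of its uniqueness clause to verify preservation of identities and composition is the intended, standard completion of that remark.
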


Recall that if $H\in \IS$, a subset $F\subseteq H$ is said to be a
\emph{filter} if it is an upset,  $1 \in
F$, and $a\we b \in F$ whenever $a,b \in F$. It
is part of the folklore that if $H\in \IS$, then the set of
implicative filters of $H$ is equal to the set of filters of $H$.
\vspace{1pt}

Let $\U$ be the forgetful functor from $\IS$ to $\Hil$; namely,
$\U$ sends every implicative semilattice to its Hilbert algebra
reduct and the homomorphisms accordingly.

\begin{prop}\label{pu}
Let $H$ be a Hilbert algebra and let $A$ be an implicative
semilattice. Consider the $\{\to, 1\}$-fragment $\U(A)$ of $A$ and
a homomorphism $h: H \to  \mathrm{U}(A)$. Then, there exists a
unique homomorphism $\overline{h}:\FC(H) \ra A$ such that $h =
\overline{h}  \circ \varphi_H$.
\end{prop}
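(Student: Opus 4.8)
The plan is to follow the strategy of Proposition~\ref{unique -hom-extension}, but with the arbitrary implicative semilattice $A$ in place of the target $\FC(H_2)$. Every element of $\FC(H)$ has the form $U = \varphi_H(a_1) \cap \cdots \cap \varphi_H(a_n)$ for some $a_1, \ldots, a_n \in H$, and any $\overline{h}$ with $h = \overline{h} \circ \varphi_H$ must satisfy $\overline{h}(\varphi_H(a)) = h(a)$. Since such an $\overline{h}$ is required to preserve meets, it is forced that $\overline{h}(U) = h(a_1) \we \cdots \we h(a_n)$; this yields uniqueness at once and tells us exactly how to define $\overline{h}$ for the existence part.

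First I would check that this formula is well defined, and this is the step I expect to be the main obstacle, since a single set $U$ admits many representations as a finite intersection. It suffices to prove that
\[
\varphi_H(a_1) \cap \cdots \cap \varphi_H(a_n) \subseteq \varphi_H(b_1) \cap \cdots \cap \varphi_H(b_m)
\]
implies $h(a_1) \we \cdots \we h(a_n) \leq h(b_1) \we \cdots \we h(b_m)$, because equality of two representing intersections then gives equality of the associated meets by antisymmetry. Fixing $j$, the displayed inclusion yields $\varphi_H(a_1) \cap \cdots \cap \varphi_H(a_n) \subseteq \varphi_H(b_j)$, which in the Heyting algebra $\X(H)^{+}$ amounts to $\varphi_H(a_1) \cap \cdots \cap \varphi_H(a_n) \Ra \varphi_H(b_j) = \X(H)$. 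By the identity $\varphi_H(a_1) \cap \cdots \cap \varphi_H(a_n) \Ra \varphi_H(b_j) = \varphi_H(a_1 \ra (\ldots (a_n \ra b_j)\ldots))$ recorded just before Proposition~\ref{closure-L(H)-under-Rightarrow}, together with $\X(H) = \varphi_H(1)$ and the injectivity of $\varphi_H$, this is equivalent to $a_1 \ra (\ldots (a_n \ra b_j)\ldots) = 1$ in $H$. Applying the Hilbert homomorphism $h$, which preserves $\ra$ and $1$, gives $h(a_1) \ra (\ldots (h(a_n) \ra h(b_j))\ldots) = 1$ in $A$; and the implicative-semilattice equation $(x \we y)\ra z \approx x \ra (y \ra z)$, iterated, rewrites this as $(h(a_1) \we \cdots \we h(a_n)) \ra h(b_j) = 1$, i.e.\ $h(a_1) \we \cdots \we h(a_n) \leq h(b_j)$. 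Meeting over all $j$ completes the argument.

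Finally I would verify that $\overline{h}$ is a homomorphism. Preservation of meets and of the top element $\X(H) = \varphi_H(1)$ is immediate from the defining formula, and $h = \overline{h} \circ \varphi_H$ holds by construction. For the implication I would reproduce the computation of Proposition~\ref{unique -hom-extension}: with $U = \varphi_H(a_1) \cap \cdots \cap \varphi_H(a_n)$ and $V = \varphi_H(b_1) \cap \cdots \cap \varphi_H(b_m)$, one has $U \Ra V = \bigcap_{1 \leq i \leq m} \varphi_H(a_1 \ra (\ldots (a_n \ra b_i)\ldots))$, whence $\overline{h}(U \Ra V) = \bigwedge_{1 \leq i \leq m} h(a_1 \ra (\ldots (a_n \ra b_i)\ldots))$. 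Using that $h$ preserves $\ra$ together with the equations $(x \we y)\ra z \approx x \ra (y\ra z)$ and $x \ra (y \we z) \approx (x\ra y)\we(x \ra z)$, which hold in every implicative semilattice, this meet collapses to $(h(a_1) \we \cdots \we h(a_n)) \ra (h(b_1) \we \cdots \we h(b_m)) = \overline{h}(U) \ra \overline{h}(V)$, as required.
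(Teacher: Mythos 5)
Your proof is correct, but it takes a genuinely different route from the paper's. The paper proves Proposition \ref{pu} by reduction: it observes that $\varphi_{\U(A)}$ is an isomorphism from $A$ onto $\FC(\U(A))$ (because $\varphi_{\U(A)}(a) \cap \varphi_{\U(A)}(b) = \varphi_{\U(A)}(a \we b)$, so the image of $A$ is already closed under finite intersections), applies Proposition \ref{unique -hom-extension} to $h: H \to \U(A)$ to obtain $\hat{h}: \FC(H) \to \FC(\U(A))$, and sets $\overline{h} = \varphi_{\U(A)}^{-1} \circ \hat{h}$; uniqueness is likewise inherited from Proposition \ref{unique -hom-extension} using the injectivity of $\varphi_{\U(A)}$. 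You instead construct $\overline{h}$ directly by the explicit formula $\overline{h}(U) = h(a_1) \we \cdots \we h(a_n)$, and your well-definedness argument replaces the paper's filter-theoretic Lemma \ref{exten-hom-Hilbert-1} (which separates points using irreducible implicative filters) by a purely equational one: the inclusion of intersections becomes, via the Heyting order characterization $U \subseteq V \Leftrightarrow U \Ra V = \X(H)$ and the injectivity of $\varphi_H$, the equation $a_1 \ra (\ldots (a_n \ra b_j)\ldots) = 1$ in $H$, which $h$ transports to $A$ where semilattice identities finish the job. What the paper's route buys is brevity—all the computational work is done once, in Proposition \ref{unique -hom-extension}, and reused—at the cost of the (easy but only sketched) claim that $\varphi_{\U(A)}$ is onto $\FC(\U(A))$. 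What your route buys is a self-contained, explicit description of $\overline{h}$ landing in $A$ itself, whose well-definedness, once the identity recorded before Proposition \ref{closure-L(H)-under-Rightarrow} is available, needs no further appeal to the representation machinery; the price is that you essentially repeat the homomorphism computations of Proposition \ref{unique -hom-extension} with meets in $A$ in place of intersections.
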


\begin{proof}
Straightforward computations show that the map $\varphi_{\U(A)}:
\U(A) \to \FC(\U(A))$, that we abbreviate in this proof as
$\varphi_A$, is in fact an isomorphism between $A$ and
$\FC(\U(A))$, because $\varphi_{A}(a) \cap \varphi_{A}(b) =
\varphi_{A}(a \wedge b)$ for every $a, b \in A$. By Proposition
\ref{unique -hom-extension} we have that there exists a unique
homomorphism $\hat{h}: \FC(H) \to \FC(U(A))$ such that
$\hat{h}\circ \varphi_H = \varphi_{A} \circ h$. Let $\overline{h}:
L(H) \ra A$ be the map $\varphi_{A} ^{-1} \circ \hat{h}$. Then it
follows that $h = \overline{h}  \circ \varphi_H$. This proves the
existence. To prove uniqueness suppose the $f_1, f_2: \FC(H) \ra
A$ are such that  $h = f_1 \circ \varphi_H$ and $h = f_2  \circ
\varphi_H$. Then $\varphi_A \circ h = (\varphi_A \circ f_1) \circ
\varphi_H$ and $\varphi_A \circ h = (\varphi_A \circ f_2) \circ
\varphi_H$. Therefore, Proposition \ref{unique -hom-extension}
also implies that $\varphi_A \circ f_1 = \varphi_A \circ f_2$.
Since $\varphi_A$ is one-to-one, it follows that $f_ 1= f_2$.
\end{proof}

Let $\mathrm{I_{\Hil}}$ be the identity functor in $\Hil$. From
Proposition \ref{unique -hom-extension} follows that the morphisms
$\varphi_H: H \to \FC(H)$ establish a natural transformation from
$\mathrm{I_{\Hil}}$ to the functor $\U \circ \f$. Then using
Proposition \ref{pu} we obtain the following result.

\begin{thm} \label{pteo}
The functor $\f: \Hil \ra \IS$ is left adjoint to $\U$.
\end{thm}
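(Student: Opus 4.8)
The plan is to establish the adjunction $\f \dashv \U$ by producing the standard data of an adjunction: a natural transformation $\eta\colon \mathrm{I}_{\Hil} \Rightarrow \U \circ \f$ serving as the unit, together with the universal property that makes each component $\eta_H = \varphi_H$ an initial object in the appropriate comma category. Almost all of the real work has already been done in the preceding propositions, so the proof here is largely a matter of assembling them into the language of adjoint functors. The cleanest route is to verify the \emph{universal mapping property} directly, since Proposition~\ref{pu} is exactly that property in disguise.

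\begin{proof}
We first check that the family of morphisms $\varphi_H\colon H \to \U(\f(H))$ is a natural transformation $\eta\colon \mathrm{I}_{\Hil} \Rightarrow \U \circ \f$. Naturality means that for every homomorphism $h\colon H_1 \to H_2$ of Hilbert algebras the square
\[
\xymatrix{
H_1 \ar[rr]^{\varphi_{H_1}} \ar[d]_{h} & & \U(\f(H_1)) \ar[d]^{\U(\hat{h})}\\
H_2 \ar[rr]^{\varphi_{H_2}} & & \U(\f(H_2))
}
\]
commutes; but this is precisely the equation $\varphi_{H_2} \circ h = \hat{h} \circ \varphi_{H_1}$ established in Proposition~\ref{unique -hom-extension} (note that $\U$ acts as the identity on underlying maps, so $\U(\hat h) = \hat h$ as a function). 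Thus $\eta$ is a natural transformation.

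It remains to prove that $(\f(H), \varphi_H)$ is universal from $H$ to $\U$, that is: for every implicative semilattice $A$ and every Hilbert algebra homomorphism $h\colon H \to \U(A)$ there is a unique homomorphism $\overline{h}\colon \f(H) \to A$ of implicative semilattices with $\U(\overline{h}) \circ \varphi_H = h$. This is exactly the content of Proposition~\ref{pu}, so the universal property holds. By the standard characterization of adjunctions via universal arrows (see any reference on adjoint functors), the existence of such a universal arrow $\varphi_H$ for each object $H$, together with the naturality just verified, is equivalent to $\f$ being left adjoint to $\U$ with unit $\eta$. Hence $\f \dashv \U$.
\end{proof}

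I expect no genuine obstacle here, since Proposition~\ref{unique -hom-extension} supplies naturality of the unit and Proposition~\ref{pu} supplies the universal property; the only point requiring care is the bookkeeping of the forgetful functor $\U$, namely remembering that $\U$ leaves underlying set maps unchanged so that the diagrams of Propositions~\ref{unique -hom-extension} and~\ref{pu} can be read verbatim as the adjunction diagrams. If one instead prefers to exhibit the natural bijection $\mathrm{Hom}_{\IS}(\f(H), A) \cong \mathrm{Hom}_{\Hil}(H, \U(A))$ explicitly, it is given by $g \mapsto \U(g) \circ \varphi_H$ with inverse $h \mapsto \overline{h}$, and its naturality in both $H$ and $A$ follows routinely from the uniqueness clauses in the same two propositions.
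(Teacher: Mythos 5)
Your proposal is correct and follows essentially the same route as the paper: the paper likewise obtains the theorem by observing that Proposition~\ref{unique -hom-extension} makes the maps $\varphi_H$ into a natural transformation from $\mathrm{I_{\Hil}}$ to $\U \circ \f$, and that Proposition~\ref{pu} supplies exactly the universal property of each $\varphi_H$, from which the adjunction follows by the standard characterization via universal arrows. Your write-up is just a more explicit unpacking of the same argument.
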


An algebra $(H,\ra,0,1)$ of type $(2,0,0)$ is a \emph{bounded
Hilbert algebra} if $(H,\ra,1)$ is a Hilbert algebra and $0\leq a$
for every $a\in A$. We write $\Hil_0$ for the algebraic category
of bounded Hilbert algebras. An algebra $(H,\we,\ra,0,1)$ of type
$(2,2,0,0)$ is a \emph{bounded implicative semilattice} if
$(H,\we,\ra,1)$ is an implicative semilattice and $0\leq a$ for
every $a\in H$. We write $\IS_0$ for the algebraic category of
bounded implicative semilattices. Note that  if $H$ is a bounded
Hilbert algebra, then $\varphi_{H}(0) = \emptyset$ and therefore
the bottom element of the Heyting algebra $\X(H)^{+}$ belongs to
the implicative semilattice $\langle \varphi_{H}[H]\rangle$ and
hence it is a bounded implicative semilattice. We define the
functors $(\ )^{\IS}: \Hil_0 \ra \IS_0$ and $\U$ similarly to
those of Theorem \ref{pteo}. Straightforward modifications of
propositions \ref{unique -hom-extension} and \ref{pu} and their
proofs show the following result.

\begin{cor} \label{pteob}
The functor $(\ )^{\IS}: \Hil_0 \ra \IS_0$ is left adjoint to $\U$.
\end{cor}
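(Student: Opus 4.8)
The plan is to follow the unbounded case verbatim, checking at each step that the extra nullary constant $0$ is handled correctly; no new ideas are needed beyond the observation, already recorded in the text, that $\varphi_H(0) = \emptyset$ is the bottom element of $\X(H)^{+}$ and lies in $\langle \varphi_H[H]\rangle$. First I would confirm that $(\ )^{\IS}$ is well defined on $\Hil_0$: for $H \in \Hil_0$ the algebra $\FC(H)$ is an implicative semilattice by Proposition \ref{closure-L(H)-under-Rightarrow}, and since its least element $\emptyset = \varphi_H(0)$ belongs to $\langle \varphi_H[H]\rangle$, it is in fact a bounded implicative semilattice, so $\FC(H) \in \IS_0$. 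Moreover $\varphi_H \colon H \to \U(\FC(H))$ is a bounded Hilbert algebra homomorphism, since it is already a Hilbert homomorphism and sends $0$ to the bottom $\emptyset$.

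Next I would adapt Proposition \ref{unique -hom-extension}. Given a homomorphism $h \colon H_1 \to H_2$ of bounded Hilbert algebras, the map $\hat{h}$ constructed there is already a homomorphism of implicative semilattices satisfying $\hat{h} \circ \varphi_{H_1} = \varphi_{H_2} \circ h$; the only additional point is that it preserves the bottom element. This is immediate, because $\hat{h}(\emptyset) = \hat{h}(\varphi_{H_1}(0)) = \varphi_{H_2}(h(0)) = \varphi_{H_2}(0) = \emptyset$, using that $h(0) = 0$. Hence $\hat{h}$ is a morphism of $\IS_0$, and uniqueness is unchanged. This shows that $(\ )^{\IS}$ is a functor $\Hil_0 \to \IS_0$ and that the maps $\varphi_H$ form a natural transformation from the identity functor on $\Hil_0$ to $\U \circ (\ )^{\IS}$.

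Then I would adapt Proposition \ref{pu} to obtain the universal property. Let $H \in \Hil_0$, let $A \in \IS_0$, and let $h \colon H \to \U(A)$ be a homomorphism of bounded Hilbert algebras. As in the unbounded proof, $\varphi_A \colon \U(A) \to \FC(\U(A))$ is an isomorphism of implicative semilattices, and here it is in addition an isomorphism of \emph{bounded} implicative semilattices, since it maps the bottom $0_A$ to $\varphi_A(0_A) = \emptyset$. Composing the extension $\hat{h}$ of the previous step with $\varphi_A^{-1}$ yields $\overline{h} = \varphi_A^{-1} \circ \hat{h} \colon \FC(H) \to A$, which is a homomorphism of bounded implicative semilattices satisfying $h = \overline{h} \circ \varphi_H$; uniqueness follows exactly as before from the injectivity of $\varphi_A$. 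The adjunction then follows from the natural transformation and this universal property, just as Theorem \ref{pteo} follows in the unbounded case.

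The steps are all routine; the only thing requiring genuine attention, rather than direct transcription, is the verification that preservation of $0$ threads consistently through the constructions—specifically that $\varphi_H(0)$ really is the bottom of $\langle \varphi_H[H]\rangle$ and that $\varphi_A$ is an isomorphism of bounded implicative semilattices—so that every homomorphism produced lands in the category $\IS_0$ rather than merely in $\IS$. No essential obstacle arises beyond this bookkeeping.
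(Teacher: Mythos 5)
Your proposal is correct and follows exactly the route the paper intends: the paper's own ``proof'' consists precisely of the remark that $\varphi_H(0)=\emptyset$ is the bottom element of $\langle\varphi_H[H]\rangle$, so that $\FC(H)\in\IS_0$, together with the assertion that straightforward modifications of Propositions \ref{unique -hom-extension} and \ref{pu} (checking that $0$ is preserved throughout) yield the adjunction as in Theorem \ref{pteo}. Your write-up simply fills in that bookkeeping, with no divergence and no gaps.
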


\section{An adjunction between $\FHil$ and $\FIS$}\label{FHil}

A frontal operator in a Hilbert algebra resembles a modal operator
$\Box$ in a Boolean algebra or in a distributive lattice in may
respects. Let $(H,\tau)$ be a frontal Hilbert algebra. We can
extend $\tau$ to the algebra $\X(H)^{+}$ is a similar way that in
a normal modal algebra we extend $\Box$ to the powerset algebra of
the ultrafilters or in a distributive lattice with a normal $\Box$
we extend it to the distributive lattice of the upsets of the
poset of its prime filters. We do it in the next definition.

\begin{defn}
Let $(H,\tau)\in \FHil$. We define the map $\tp:\X(H)^+ \ra
\X(H)^+$ in the following way:
\[
P\in \tp(U) \Longleftrightarrow\;(\forall Q\in \X(H))(
\tau^{-1}[P]\subseteq Q \Longrightarrow Q\in U).
\]
Note that the map is well-defined since from the definition
follows that $\tp(U)$ is an upset.
\end{defn}

The restriction of $\tp$ to $\varphi[H]$ is in fact (modulo
isomorphism) $\tau$ as shown in the next lemma.

\begin{lem} \label{laux}
Let $(H,\tau)$ be a frontal Hilbert algebra. Then for every $a \in
H$
\[
\varphi(\tau(a)) = \tp(\varphi(a)).
\]
\end{lem}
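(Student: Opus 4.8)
The plan is to prove the set equality $\varphi(\tau(a)) = \tp(\varphi(a))$ by establishing both inclusions, unwinding the definition of $\tp$ and using the characterization of irreducible filters together with Lemma~\ref{irr-tau}. First I would fix $a \in H$ and recall that by definition $P \in \tp(\varphi(a))$ means: for every $Q \in \X(H)$ with $\tau^{-1}[P] \subseteq Q$ we have $Q \in \varphi(a)$, i.e.\ $a \in Q$. So the right-hand side consists of those irreducible filters $P$ such that every irreducible filter $Q$ containing $\tau^{-1}[P]$ also contains $a$.

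For the inclusion $\varphi(\tau(a)) \subseteq \tp(\varphi(a))$, suppose $\tau(a) \in P$; I must show $a \in Q$ for every $Q \in \X(H)$ with $\tau^{-1}[P] \subseteq Q$. The natural move is to observe that $\tau(a) \in P$ forces $a \in \tau^{-1}[P]$ directly, since $\tau(a) = \tau(a)$ gives $a \in \tau^{-1}[P]$; hence $a \in \tau^{-1}[P] \subseteq Q$, and we are done. This direction should be essentially immediate once the definition of $\tau^{-1}[P]$ is spelled out.

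For the reverse inclusion $\tp(\varphi(a)) \subseteq \varphi(\tau(a))$, I would argue by contraposition: assume $\tau(a) \notin P$ and produce a witness $Q \in \X(H)$ with $\tau^{-1}[P] \subseteq Q$ but $a \notin Q$, so that $P \notin \tp(\varphi(a))$. The idea is to build such a $Q$ using the separation results, Corollary~\ref{tfpc1} and Lemma~\ref{tfp}, by separating the filter $\tau^{-1}[P]$ from an appropriate order-ideal containing $a$, for instance the downset $(a]$, after checking that $\tau^{-1}[P] \cap (a] = \emptyset$. Here Lemma~\ref{irr-tau} is the crucial ingredient: it controls the interaction of $\tau$ with membership in irreducible filters and is exactly what is needed to guarantee that $\tau^{-1}[P]$ does not meet the ideal generated by $a$, equivalently that no element below $a$ lies in $\tau^{-1}[P]$ when $\tau(a) \notin P$. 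I would also need to verify that $\tau^{-1}[P]$ is itself an implicative filter, which follows from the equivalence recorded after Lemma~\ref{tfp} (condition \textbf{(i1)} gives that $\tau$ satisfies the pullback-of-filters property, since $\tau(1)=1$ and $\tau(x \to y) \le \tau(x) \to \tau(y)$).

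The main obstacle I expect is precisely this reverse inclusion: arranging the correct filter–ideal pair so that Lemma~\ref{tfp} yields an irreducible $Q \supseteq \tau^{-1}[P]$ omitting $a$, and confirming the disjointness hypothesis via Lemma~\ref{irr-tau}. The delicate point is that $\tau$ need not be a lattice or semilattice homomorphism, so one cannot manipulate $\tau^{-1}[P]$ as freely as the preimage of a genuine homomorphism; the only structural facts available are \textbf{(i1)}--\textbf{(i3)} and the specialized behaviour of $\tau$ on irreducible filters captured by Lemma~\ref{irr-tau}. Assembling these into the required disjointness is where the real work lies, whereas the forward inclusion and the routine verification that $\tau^{-1}[P]$ is a filter are straightforward.
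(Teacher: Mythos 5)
Your overall strategy coincides with the paper's proof: the forward inclusion is immediate because $\tau(a)\in P$ puts $a$ into $\tau^{-1}[P]$, and the reverse inclusion goes by observing that $\tau^{-1}[P]$ is an implicative filter (via Celani's criterion, using $\tau(1)=1$ and \textbf{(i1)}) not containing $a$, and then invoking a separation result to get an irreducible $Q\supseteq\tau^{-1}[P]$ with $a\notin Q$. That is exactly what the paper does.

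However, your assessment of where the difficulty lies is mistaken: Lemma \ref{irr-tau} is not needed at all, and the ``disjointness'' you plan to establish with it is trivial. Once $\tau^{-1}[P]$ is known to be an implicative filter, it is in particular an upset, so $\tau^{-1}[P]\cap(a]\neq\emptyset$ would force $a\in\tau^{-1}[P]$, i.e.\ $\tau(a)\in P$, contradicting the hypothesis outright; equivalently, since $b\leq a$ gives $\tau(b)\leq\tau(a)$ (monotonicity of $\tau$ follows from \textbf{(i1)} and $\tau(1)=1$), no element below $a$ can lie in $\tau^{-1}[P]$. In fact you do not even need to pass through the order-ideal $(a]$ and Lemma \ref{tfp}: Corollary \ref{tfpc1}(1) applies directly to the filter $\tau^{-1}[P]$ and the single element $a\notin\tau^{-1}[P]$, which is the paper's route. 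Had you tried to carry out the disjointness argument literally via Lemma \ref{irr-tau} (whose hypothesis is $\tau(a)\in P$, the opposite of your situation), you would have hit a dead end; that lemma's actual role in the paper is in Proposition \ref{pp}, where one proves that $\tp$ satisfies the frontal condition $\tp(U)\subseteq V\cup(V\Rightarrow U)$ on $\X(H)^{+}$. So your proof goes through, but only after deleting the spurious appeal to Lemma \ref{irr-tau} and noticing that the step you flagged as ``the real work'' is immediate.
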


\begin{proof}
Let $P\in \varphi(\tau(a))$, i.e., $\tau(a) \in P$. Let $Q\in
\X(H)$ such that $\tau^{-1}[P] \subseteq Q$. Then $a \in Q$. Thus
it follows that $\varphi(\tau(a)) \subseteq \tp(\varphi(a))$. To
prove the other inclusion, let $P \in \tp(\varphi(a))$ and assume
that $\tau(a) \not \in P$. Thus, $a \not \in \tau^{-1}[P]$. Since
it holds that $\tau(1) = 1$ and $\tau(a\ra b)\leq \tau(a) \ra
\tau(b)$ for every $a,b\in H$, it follows that $\tau^{-1}[P]$ is
an implicative filter. Therefore, there is $Q \in \X(H)$ such that
$\tau^{-1}[P] \subseteq Q$ and $a \not \in Q$, which is a
contradiction with the fact that $P \in \tp(\varphi(a))$.
\end{proof}

In what follows we will prove that if $(H,\tau)\in \FHil$, then
$(\X(H)^+,\tp)$ is a frontal Heyting algebra (in particular, the
appropriate reducts are a frontal implicative semilattice and a
frontal Hilbert algebra).

\begin{prop}\label{pp}
For every frontal Hilbert algebra $(H,\tau)$, the algebra
$(\X(H)^+,\tp)$ is a frontal Heyting algebra and $\varphi$ is an
embedding of frontal Hilbert algebras from $(H,\tau)$ to
$(\X(H)^+,\tp)$.
\end{prop}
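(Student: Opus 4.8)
The plan is to establish the two assertions in turn: first that $(\X(H)^+, \tp)$ is a frontal Heyting algebra, and then that $\varphi$ is an embedding of frontal Hilbert algebras. We already know from the preliminaries that $\X(H)^+$ is a Heyting algebra under $\cap$, $\cup$, $\Rightarrow$ (with bottom $\emptyset$ and top $\X(H)$), so the work is entirely about the operator $\tp$. I must verify the three frontal-operator axioms \textbf{(f1)}, \textbf{(f2)}, \textbf{(f3)} for $\tp$ on the Heyting algebra $\X(H)^+$, using the definition $P \in \tp(U) \Longleftrightarrow (\forall Q \in \X(H))(\tau^{-1}[P] \subseteq Q \Rightarrow Q \in U)$.

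For \textbf{(f2)}, i.e. $U \subseteq \tp(U)$, I would take $P \in U$ and any $Q$ with $\tau^{-1}[P] \subseteq Q$; since $a \leq \tau(a)$ gives $\tau^{-1}[P] \subseteq P$ is not quite what is needed, the key observation is that $P \subseteq \tau^{-1}[P]$ (because $a \in P$ and $a \leq \tau(a)$ force $\tau(a) \in P$, so $a \in \tau^{-1}[P]$), whence $\tau^{-1}[P] \subseteq Q$ yields $P \subseteq Q$ and thus $Q \in U$ as $U$ is an upset. For \textbf{(f1)}, the preservation of finite meets $\tp(U \cap V) = \tp(U) \cap \tp(V)$, the inclusion $\tp(U \cap V) \subseteq \tp(U) \cap \tp(V)$ is immediate by monotonicity, and the reverse inclusion follows by unwinding the universal quantifier: if $\tau^{-1}[P] \subseteq Q$ forces $Q \in U$ and also forces $Q \in V$, then it forces $Q \in U \cap V$. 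I would also record $\tp(\X(H)) = \X(H)$, which is needed so that $\tp$ restricts correctly.

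The main obstacle will be axiom \textbf{(f3)}, $\tp(U) \subseteq V \cup (V \Rightarrow U)$, since this is where the Heyting structure and the frontal axiom \textbf{(i3)} interact most delicately. I would argue contrapositively: suppose $P \in \tp(U)$ but $P \notin V$; I must show $P \in V \Rightarrow U$, i.e. that every $Q \supseteq P$ with $Q \in V$ satisfies $Q \in U$. The plan is to exploit Lemma \ref{irr-tau}, which is exactly the irreducible-filter-level shadow of \textbf{(i3)}: it says that for $P \in \X(H)$, if $\tau(a) \in P$ and $b \notin P$ then $b \to a \in P$. The delicate point is translating the semantic hypotheses $P \in \tp(U)$ and $P \notin V$ into membership statements of the form $\tau(a) \in P$ and $b \notin P$ to which the lemma applies; this requires choosing suitable $a, b \in H$ (or reducing to the case $U = \varphi(a)$, $V = \varphi(b)$ via the description of $\X(H)^+$ elements as intersections) and then checking the inclusion $Q \in U$ goes through.

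Finally, for the second assertion, that $\varphi$ is a frontal-algebra embedding, almost everything is already in place: the earlier discussion shows $\varphi$ is a $\{\to, 1\}$-embedding of $H$ into $\X(H)^+$, and Lemma \ref{laux} gives precisely $\varphi(\tau(a)) = \tp(\varphi(a))$, so $\varphi$ commutes with the frontal operators. Injectivity of $\varphi$ was noted to follow from Corollary \ref{tfpc2}. Thus this part is a short assembly of previously established facts, and I would simply cite Lemma \ref{laux} together with the embedding property of $\varphi$ to conclude.
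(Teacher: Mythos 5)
Your treatment of \textbf{(f1)}, \textbf{(f2)} and of the embedding claim is correct and is essentially the paper's argument (the paper proves \textbf{(f2)} contrapositively, you prove it directly; both rest on the same observation $P \subseteq \tau^{-1}[P]$, and the embedding part is exactly the assembly of Lemma \ref{laux} with the earlier facts about $\varphi$). But for \textbf{(f3)}, which you yourself single out as the main obstacle, you stop precisely where the real work begins: you never say how to produce the elements $a, b \in H$ to which Lemma \ref{irr-tau} is to be applied. The paper's key idea is that $a$ and $b$ are extracted from comparing the \emph{filters} $P$, $Q$ and $\tau^{-1}[P]$, not from decomposing $U$ and $V$. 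Concretely: suppose toward a contradiction that $P \in \tp(U)$, $P \notin V$, and that there is $Q \in \X(H)$ with $P \subseteq Q$, $Q \in V$ and $Q \notin U$. Then (i) since $P \in \tp(U)$ and $Q \notin U$, the defining condition of $\tp$ forces $\tau^{-1}[P] \nsubseteq Q$, which yields an $a \in H$ with $\tau(a) \in P$ and $a \notin Q$; and (ii) since $Q \in V$, $P \notin V$ and $V$ is an upset, we must have $Q \nsubseteq P$, which yields a $b \in Q$ with $b \notin P$. Now Lemma \ref{irr-tau} gives $b \ra a \in P \subseteq Q$, and, $Q$ being an implicative filter containing $b$, modus ponens gives $a \in Q$, contradicting $a \notin Q$. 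Without step (i) and step (ii) the proof does not exist; identifying Lemma \ref{irr-tau} as relevant is not yet a proof of \textbf{(f3)}.

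Moreover, the one concrete fallback you propose for this step --- ``reducing to the case $U = \varphi(a)$, $V = \varphi(b)$ via the description of $\X(H)^+$ elements as intersections'' --- is not available. The proposition concerns the full Heyting algebra $\X(H)^{+}$ of \emph{all} upsets of $\X(H)$; only the elements of the subalgebra $\FC(H) = \langle \varphi[H]\rangle$ are finite intersections of sets of the form $\varphi(a)$, and a general $U \in \X(H)^{+}$ need not be of that form, nor does the inequality $\tp(U) \subseteq V \cup (V \Ra U)$ for such special $U, V$ transfer to arbitrary upsets by any routine argument. Indeed, the paper needs \textbf{(f3)} on all of $\X(H)^{+}$ first, and only afterwards restricts $\tp$ to $\FC(H)$ (Corollary \ref{paf1}); running your reduction would invert this order and prove only a weaker statement. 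So the proposal has a genuine gap at the central point of the proposition, and the route suggested for closing it would fail.
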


\begin{proof}
First of all note that for every $P \in \X(H)^+$ we have that
$P\subseteq \tau^{-1}[P]$. This holds because $a\leq \tau(a)$ for
every $a\in H$.

Let $U\in \X(H)^+$. We show that $U\subseteq \tp(U)$. To this end,
let $P\notin \tp(U)$. Thus, there exists $Q\in \X(H)$ such that
$\tau^{-1}[P]\subseteq Q$ and $Q\notin U$. Since $P\subseteq
\tau^{-1}[P]$, then $P\subseteq Q$. But $Q\notin U$, so $P\notin
U$.

Straightforward computations based on the definition of $\tp$ show
that for all $U,V\in \X(H)^+$, $\tp(U\cap V) = \tp(U) \cap \tp(V)$.

 Finally we will see that $\tp(U) \subseteq V
\cup (V\Ra U)$, for every $U,V\in \X(H)^+$. Suppose that there
exists $P\in \tp(U)$ such that $P\notin V \cup (V\Ra U)$. Hence,
$P\notin V$ and there exists $Q\in \X(H)$ such that $P\subseteq
Q$, $Q\in V$ and $Q\notin U$. Since $P\in \tp(U)$ and $Q\notin U$
then $\tau^{-1}[P]\nsubseteq Q$, which implies that there exists
$a\in H$ such that $\tau(a)\in P$ and $a\notin Q$. Also notice
that $Q\nsubseteq P$ because $P\notin V$, $Q\in V$ and $P\subseteq
Q$. Hence, there exists $b\in H$ such that $b\in Q$ and $b\notin
P$. Since $\tau(a)\in P$ and $b\notin P$ then it follows from
Lemma \ref{irr-tau} that $b\ra a \in P$, so $b\ra a \in Q$. Since
$b\in Q$ then $a \in Q$, which is a contradiction. We conclude
that $\tp(U) \subseteq V \cup (V\Ra U)$.

Therefore we have shown that $(\X(H)^+,\tp)$ is a frontal Heyting
algebra.
\end{proof}

Let $(H,\tau)\in \FHil$ and $U\in \FC(H)$. Then there exist
$a_1,\ldots,a_n\in H$ such that $U = \varphi(a_1)\cap\cdots
\varphi(a_n)$. Since $\tp$ is a frontal operator on the Heyting
algebra $\X(H)^+$,  $\tp(U) = \varphi(\tau(a_1))\cap\cdots
\cap \varphi(\tau(a_n)) \in \FC(H)$. It follows from Proposition
\ref{pp} that the restriction of $\tp$ to $\FC(H)$ is a function
$\tp:\FC(H) \ra \FC(H)$ which is a frontal operator on $\FC(H)$.
Taking into account Proposition \ref{propfun} we obtain the
following result.

\begin{cor}\label{paf1}
Let $(H,\tau)\in \FHil$. Then $(\FC(H),\tp)\in \FIS$.
\end{cor}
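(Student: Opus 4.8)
The plan is to verify the two requirements in the definition of a frontal implicative semilattice: that $\FC(H)$ is an implicative semilattice, and that the restriction of $\tp$ to $\FC(H)$ is a frontal operator of its Hilbert algebra reduct. The first requirement is already in hand, since Proposition \ref{closure-L(H)-under-Rightarrow} and the line immediately following it establish that $\FC(H) = (\langle \varphi[H]\rangle, \cap, \Rightarrow, \X(H))$ is an implicative semilattice, in fact a subalgebra of the $\{\we, \to, 1\}$-reduct of the Heyting algebra $\X(H)^+$. So the whole argument concerns $\tp$.

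First I would check that $\tp$ maps $\FC(H)$ into itself. Take $U \in \FC(H)$ and write $U = \varphi(a_1) \cap \cdots \cap \varphi(a_n)$ for some $a_1, \ldots, a_n \in H$. Since $\tp$ preserves finite meets on $\X(H)^+$ by Proposition \ref{pp}, we get $\tp(U) = \tp(\varphi(a_1)) \cap \cdots \cap \tp(\varphi(a_n))$, and by Lemma \ref{laux} each $\tp(\varphi(a_i)) = \varphi(\tau(a_i))$. Hence $\tp(U) = \varphi(\tau(a_1)) \cap \cdots \cap \varphi(\tau(a_n)) \in \langle \varphi[H]\rangle = \FC(H)$. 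There is no well-definedness issue here: $\tp(U)$ is fixed by $U$ alone through the definition of $\tp$, so the displayed value does not depend on the chosen representation of $U$. Thus $\tp$ restricts to a map $\tp: \FC(H) \to \FC(H)$.

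It then remains to show this restriction is a frontal operator on the implicative semilattice $\FC(H)$. By Lemma \ref{inf} it suffices to verify conditions $\mathbf{(i2)}$, $\mathbf{(i3)}$ and $\tp(U \cap V) = \tp(U) \cap \tp(V)$ for all $U, V \in \FC(H)$. All three hold because $\tp$ is already a frontal operator on the Heyting algebra $\X(H)^+$ (Proposition \ref{pp}): meet-preservation is immediate from the same proposition; and since $\FC(H)$ is closed under $\cap$ and $\Rightarrow$, the operations and the order of $\FC(H)$ are the restrictions of those of $\X(H)^+$, so the inequalities $U \subseteq \tp(U)$ and $\tp(U) \subseteq ((V \Rightarrow U) \Rightarrow V) \Rightarrow V$, valid for all elements of $\X(H)^+$, hold in particular for elements of $\FC(H)$. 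Together with Proposition \ref{propfun}, which identifies $\FC(H)$ as the implicative semilattice produced by the functor from $H$, this closes the argument.

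The point deserving care — rather than a genuine obstacle — is the closure step: it is essential that $\tp$ preserves finite meets, as this is exactly what reduces $\tp(U)$ to a meet of generators $\varphi(\tau(a_i))$ and thereby returns it to $\langle \varphi[H]\rangle$. Once closure under $\tp$, $\cap$ and $\Rightarrow$ is known, the remaining frontal-operator conditions are inherited from $\X(H)^+$ for free, so no separate computation on $\FC(H)$ is needed.
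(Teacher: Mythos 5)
Your proof is correct and follows essentially the same route as the paper: closure of $\FC(H)$ under $\tp$ via meet-preservation (Proposition \ref{pp}) together with Lemma \ref{laux}, and then inheritance of the frontal-operator conditions from the Heyting algebra $\X(H)^{+}$, combined with Proposition \ref{propfun}. Your write-up merely makes explicit some steps (well-definedness, the appeal to Lemma \ref{inf}) that the paper leaves implicit.
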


If $h:(H_1,\tau_1)\ra (H_2,\tau_2)$ is a morphism in $\FHil$, then
$h:H_1\ra H_2$ is a morphism in $\Hil$. Tehrefore, it follows from
Proposition \ref{propfun}  that $h^{\IS}:\FC(H_1)\ra \FC(H_2)$ is a
morphism in $\IS$. In the sequel we prove that
$h^{\IS}:(\FC(H_1),\tup)\ra (\FC(H_2),\tdp)$ is also a morphism in
$\FIS$.

\begin{lem} \label{paf2}
Let $h:(H_1,\tau_1)\ra (H_2,\tau_2)$ be a morphism in $\FHil$.
Then the function $\hIS:(\FC(H_1),\tup)\ra (\FC(H_2),\tdp)$ is a
morphism in $\FIS$.
\end{lem}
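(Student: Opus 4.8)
The plan is to observe that almost all of the work has already been done: by Proposition~\ref{propfun} the underlying map $\hIS:\FC(H_1)\ra \FC(H_2)$ is a morphism in $\IS$, so it already preserves $\we$, $\ra$ and $1$. Hence the entire content of the lemma reduces to the single additional identity
\[
\hIS(\tup(U)) = \tdp(\hIS(U)) \qquad \text{for every } U \in \FC(H_1),
\]
that is, to checking that $\hIS$ commutes with the two frontal operators. I would state this reduction first and then prove the displayed equality by a chain of equalities on the generators of $\FC(H_1)$.

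For the computation, fix $U \in \FC(H_1)$ and write $U = \varphi_{H_1}(a_1)\cap\cdots\cap\varphi_{H_1}(a_n)$ with $a_1,\ldots,a_n\in H_1$. By the computation preceding Corollary~\ref{paf1} one has $\tup(U) = \varphi_{H_1}(\tau_1(a_1))\cap\cdots\cap\varphi_{H_1}(\tau_1(a_n))$. Applying $\hIS$ and using that it preserves finite meets together with the identity $\hIS(\varphi_{H_1}(c)) = \varphi_{H_2}(h(c))$ recorded in the proof of Proposition~\ref{unique -hom-extension}, I obtain $\hIS(\tup(U)) = \varphi_{H_2}(h(\tau_1(a_1)))\cap\cdots\cap\varphi_{H_2}(h(\tau_1(a_n)))$. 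Since $h$ is a morphism in $\FHil$, it commutes with the frontal operators, so $h(\tau_1(a_i)) = \tau_2(h(a_i))$, turning each factor into $\varphi_{H_2}(\tau_2(h(a_i)))$. By Lemma~\ref{laux} applied in $H_2$, each such factor equals $\tdp(\varphi_{H_2}(h(a_i)))$; and since $\tdp$ preserves finite meets (Proposition~\ref{pp}), I may pull it outside the intersection to get $\tdp(\varphi_{H_2}(h(a_1))\cap\cdots\cap\varphi_{H_2}(h(a_n))) = \tdp(\hIS(U))$, which is exactly the required identity.

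The argument is routine once the ingredients are lined up, so there is no genuine obstacle; the only point worth a remark is that the intermediate expressions depend a priori on the chosen representation $U = \bigcap_i \varphi_{H_1}(a_i)$. This causes no difficulty, however, because $\hIS$ and $\tup$ are already established to be genuine, representation-independent functions, so the two endpoints $\hIS(\tup(U))$ and $\tdp(\hIS(U))$ of the chain are well defined and the asserted equality between them is all that must be verified.
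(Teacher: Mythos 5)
Your proposal is correct and follows essentially the same route as the paper: both reduce the lemma to the single identity $\hIS(\tup(U)) = \tdp(\hIS(U))$, write $U = \bigcap_{i=1}^{n}\varphi_{H_1}(a_i)$, and run the same chain of equalities using $\tup(U) = \bigcap_{i=1}^{n}\varphi_{H_1}(\tau_1(a_i))$, the identity $\hIS\circ\varphi_{H_1} = \varphi_{H_2}\circ h$, the fact that $h$ commutes with the frontal operators, and the meet-preservation of $\tdp$ together with Lemma~\ref{laux}. Your closing remark on representation-independence is a harmless extra observation; the paper handles this implicitly by treating $\hIS$ and $\tup$ as already well-defined maps.
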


\begin{proof}
We only need to prove that $\hIS(\tup(U)) = \tdp(\hIS(U))$ for
every $U\in \FC(H_1)$. Let $U\in \FC(H_1)$, so there exist
$a_1,\ldots,a_n\in H_1$ such that $U = \bigcap_{i=1}^{n}
\varphi_{H_1}(a_i)$. Taking into account that $\hIS$ is a morphism
in $\IS$ and that $\hIS(\varphi_{H_1}(a)) = \varphi_{H_2}(h(a))$,
we obtain that
\[
\begin{array}
[c]{lllll}
\hIS(\tup(U))& = & \hIS(\bigcap_{i=1}^{n} \varphi_{H_1}(\tau_{1}(a_i))) &  & \\
&        =    & \bigcap_{i=1}^{n} \hIS(\varphi_{H_1}(\tau_{1}(a_i)))    &  & \\
&        =    & \bigcap_{i=1}^{n} \varphi_{H_2}(h(\tau_{1}(a_i)))   &  & \\
&        =    & \bigcap_{i=1}^{n} \varphi_{H_2}(\tau_{2}(h(a_i)))   &  & \\
&        =    & \tdp(\bigcap_{i=1}^{n} \varphi_{H_2}(h(a_i)))     &  & \\
&        =    & \tdp(\bigcap_{i=1}^{n} \hIS(\varphi_{H_1}(a_i)))     &  & \\
&        =    & \tdp(\hIS(\bigcap_{i=1}^{n} \varphi_{H_1}(a_i)))       &  & \\
&        =    & \tdp(\hIS(U)).& &
\end{array}
\]
Therefore we obtain that $\hIS(\tup(U)) =
\tdp(\fIS(U))$, which was our aim.
\end{proof}

Then we obtain the next proposition.

\begin{prop}\label{ps5}
The functor $\f:\Hil \ra \IS$ can be extended to a functor
$\fro: \FHil \ra \FIS$.
\end{prop}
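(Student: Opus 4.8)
The plan is to build $\fro$ directly out of the two results just established and then to read off its functoriality from that of $\f$. On objects I set $(H,\tau)^{\FIS} := (\FC(H),\tp)$; this is well defined as an object of $\FIS$ precisely by Corollary \ref{paf1} (which in turn rests on Proposition \ref{pp}, guaranteeing that $\tp$ restricts to a frontal operator on $\FC(H)$). On morphisms I let the underlying map of $h^{\FIS}$ be the map $\hIS$ already produced by the functor $\f$. Lemma \ref{paf2} tells us that this map is in fact a morphism of $\FIS$, i.e.\ that it commutes with the frontal operators $\tup$ and $\tdp$, so $\fro$ is well defined on morphisms and genuinely takes values in $\FIS$.

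It then remains only to verify the two functor axioms, and here the key observation is that the forgetful functor from $\FIS$ to $\IS$ is faithful: a morphism of $\FIS$ is completely determined by its underlying $\IS$-morphism. Since $h^{\FIS}$ and $\hIS$ have the same underlying map by construction, preservation of identities and of composition for $\fro$ reduce to the corresponding equalities for $\f$. As $\f$ is already a functor by Proposition \ref{propfun}, we obtain at once that $\mathrm{id}_{(H,\tau)}^{\FIS} = \mathrm{id}_{(\FC(H),\tp)}$ and that $(g \circ h)^{\FIS} = g^{\FIS} \circ h^{\FIS}$ for composable morphisms $h, g$ of $\FHil$.

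Finally, $\fro$ extends $\f$ in the expected sense: composing $\fro$ with the forgetful functor $\FIS \ra \IS$ gives the same result as composing the forgetful functor $\FHil \ra \Hil$ with $\f$. Indeed, the $\IS$-reduct of $(\FC(H),\tp)$ is exactly $\FC(H) = H^{\IS}$, and the underlying map of $h^{\FIS}$ is $\hIS$, so the two composites agree on objects and on morphisms. I do not expect any real obstacle in carrying this out, since Corollary \ref{paf1} and Lemma \ref{paf2} have already done all the substantive work; the only mildly delicate point is to state clearly that functoriality transfers from $\f$ because the forgetful functor $\FIS \ra \IS$ is faithful, which is what lets us avoid redoing any computation involving $\tup$ and $\tdp$.
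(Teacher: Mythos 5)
Your proposal is correct and is essentially the paper's own argument: the paper states Proposition \ref{ps5} as an immediate consequence of Corollary \ref{paf1} (objects) and Lemma \ref{paf2} (morphisms), with functoriality inherited from $\f$ via Proposition \ref{propfun}, exactly as you spell out. Your explicit remark that the forgetful functor $\FIS \ra \IS$ is faithful, so that identity and composition laws transfer without further computation, is a clean way of making precise the step the paper leaves implicit.
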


We also write $\U$ for the forgetful functor from $\FIS$ to
$\FHil$. The  next proposition follows from
Proposition \ref{pteo}, Proposition \ref{ps5}, and the fact that if
$(H,\tau)\in \FIS$, then $\varphi:(H,\tau) \ra (\FC(H),\tp)$ is an
embedding.

\begin{thm} \label{HiltoIS}
The functor $\fro: \FHil \ra \FIS$ is left adjoint to $\U$.
\end{thm}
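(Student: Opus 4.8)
The plan is to establish the adjunction by verifying the universal property directly, exactly as was done for the unfrontaled adjunction in Theorem \ref{pteo}, but now carrying the frontal operator along at every step. The key structural fact already in hand is that for a frontal implicative semilattice $(A,\sigma)$, the canonical map $\varphi_{\U(A)}$ is an isomorphism of frontal implicative semilattices onto $(\FC(\U(A)), \tp)$. Indeed, Proposition \ref{pu} already shows $\varphi_A$ is an isomorphism of implicative semilattices between $A$ and $\FC(\U(A))$, and Lemma \ref{laux} shows that $\varphi_A(\sigma(a)) = \tp(\varphi_A(a))$ for every $a$, so $\varphi_A$ is compatible with the frontal operators as well. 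This is the statement flagged at the end of the excerpt, and it is the crucial new ingredient.

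Granting this, I would prove the universal property of $\fro$ as follows. Let $(H,\tau) \in \FHil$ and let $(A,\sigma) \in \FIS$, and suppose $h:(H,\tau) \to \U(A,\sigma)$ is a morphism in $\FHil$. By Theorem \ref{pteo} (more precisely Proposition \ref{pu}) there is a unique $\IS$-homomorphism $\overline{h}:\FC(H) \to A$ with $h = \overline{h}\circ\varphi_H$. The only thing left to check is that $\overline{h}$ respects the frontal operators, that is $\overline{h}(\tp(U)) = \sigma(\overline{h}(U))$ for every $U \in \FC(H)$. Writing $U = \bigcap_{i=1}^n \varphi_H(a_i)$ and using that $\tp(U) = \bigcap_{i=1}^n \varphi_H(\tau(a_i))$ (established just before Corollary \ref{paf1}), one computes
\[
\overline{h}(\tp(U)) = \overline{h}\Bigl(\bigcap_{i=1}^n \varphi_H(\tau(a_i))\Bigr) = \bigwedge_{i=1}^n \overline{h}(\varphi_H(\tau(a_i))) = \bigwedge_{i=1}^n h(\tau(a_i)),
\]
and since $h$ is a $\FHil$-morphism this equals $\bigwedge_{i=1}^n \sigma(h(a_i))$. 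As $\sigma$ commutes with finite meets by Lemma \ref{inf}, this is $\sigma\bigl(\bigwedge_{i=1}^n h(a_i)\bigr) = \sigma(\overline{h}(U))$, which is exactly what is wanted. Uniqueness of $\overline{h}$ as a $\FIS$-morphism is immediate since any such morphism is in particular an $\IS$-morphism satisfying $h = \overline{h}\circ\varphi_H$, and there is only one of those.

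**Alternatively**, and perhaps more cleanly for the write-up, I would phrase the whole argument by composing the already-established pieces: the natural transformation $\varphi$ from the identity functor on $\FHil$ to $\U\circ\fro$ (whose components are $\FHil$-embeddings by Proposition \ref{pp} restricted to $\FC(H)$, together with Corollary \ref{paf1}), factor $h$ through $\varphi_H$ at the level of $\Hil$ via Proposition \ref{pu}, then invoke the frontal isomorphism $\varphi_A:(A,\sigma) \to (\FC(\U A),\tp)$ to transport the resulting $\IS$-homomorphism $\hat{h}:\FC(H) \to \FC(\U A)$ back to $A$, and finally observe via Lemma \ref{paf2} that $\hat{h}$ is automatically a $\FIS$-morphism. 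This mirrors the proof of Proposition \ref{pu} line for line, with the single addition that every map in sight now commutes with the frontal operators.

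**The main obstacle** I anticipate is not any single hard step but rather the bookkeeping needed to be sure that the frontal operator $\tp$ on $\FC(\U A)$ genuinely corresponds to $\sigma$ on $A$ under $\varphi_A$ — i.e. nailing down the flagged isomorphism claim in full. The subtlety is that $\varphi_A$ was verified to be an $\IS$-isomorphism in Proposition \ref{pu} by the identity $\varphi_A(a)\cap\varphi_A(b)=\varphi_A(a\wedge b)$, but the compatibility $\varphi_A\circ\sigma = \tp\circ\varphi_A$ relies on Lemma \ref{laux}, which is stated for an abstract frontal Hilbert algebra; one must apply it to the Hilbert-algebra reduct $\U(A,\sigma)$ with its frontal operator $\sigma$ and confirm that the $\tp$ appearing there is the same operator restricting to $\FC(\U A)$. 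Once that identification is made explicit, the remaining verifications are the routine meet-preservation computations displayed above, and the theorem follows.
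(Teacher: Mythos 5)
Your proposal is correct and takes essentially the same approach as the paper: the paper's proof is a one-line appeal to Theorem \ref{pteo} (the underlying adjunction), Proposition \ref{ps5} (functoriality of $\fro$), and the compatibility of $\varphi$ with the frontal operators, which is precisely the argument you spell out (your ``alternative'' phrasing matches it line for line). The paper leaves implicit the verification that the induced $\IS$-morphism $\overline{h}$ also preserves the frontal operators; your meet-preservation computation, using the form $\tp(U)=\bigcap_{i=1}^n\varphi_H(\tau(a_i))$ together with Lemma \ref{inf} and Lemma \ref{laux} applied to the reduct $\U(A)$, fills that gap correctly.
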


Let $\FHil_0$ be the algebraic category of frontal bounded Hilbert
algebras and $\FIS_0$ the algebraic category of frontal bounded
implicative semilattices. We define the functor $\U$ from $\FIS_0$
to $\FHil_0$ similarly to that of Theorem \ref{HiltoIS}.

The following two corollaries are consequence of Corollary
\ref{pteob} together with similar ideas to those used to obtain
Proposition \ref{ps5} and Theorem \ref{HiltoIS}.

\begin{cor} \label{cor0}
The functor $\fro: \FHil \ra \FIS$ can be extended to a functor
$\fro: \FHil_0 \ra \FIS_0$.
\end{cor}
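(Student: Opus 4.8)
The plan is to verify that the functor $\fro$ constructed in Proposition \ref{ps5} restricts to the bounded categories without any new construction, the whole corollary reducing to combining Corollary \ref{paf1} with the boundedness observation made just before Corollary \ref{pteob}. I therefore expect no serious obstacle, only careful bookkeeping of the distinguished constant $0$ on objects and morphisms.

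First I would treat the action on objects. Given $(H,\tau)\in\FHil_0$, Corollary \ref{paf1} already yields $(\FC(H),\tp)\in\FIS$, so I only need to account for boundedness. Here I would invoke the observation preceding Corollary \ref{pteob}: since $0$ is the least element of $H$ and every $P\in\X(H)$ is a proper filter, $\varphi_H(0)=\emptyset$, and this empty upset is the bottom element of the Heyting algebra $\X(H)^+$. As $\varphi_H(0)\in\langle\varphi_H[H]\rangle=\FC(H)$, the implicative semilattice $\FC(H)$ has a least element, so setting $0:=\varphi_H(0)$ makes $(\FC(H),\tp)$ an object of $\FIS_0$.

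Next I would handle morphisms. For $h:(H_1,\tau_1)\ra(H_2,\tau_2)$ in $\FHil_0$, forgetting the constant $0$ gives a morphism in $\FHil$, whence Lemma \ref{paf2} furnishes a morphism $\hIS:(\FC(H_1),\tup)\ra(\FC(H_2),\tdp)$ in $\FIS$. It then remains only to check preservation of $0$. Using $h(0)=0$ together with the identity $\hIS(\varphi_{H_1}(a))=\varphi_{H_2}(h(a))$, I would compute $\hIS(\varphi_{H_1}(0))=\varphi_{H_2}(h(0))=\varphi_{H_2}(0)$; that is, $\hIS$ sends the bottom $\emptyset$ of $\FC(H_1)$ to the bottom $\emptyset$ of $\FC(H_2)$. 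Hence $\hIS$ is a morphism in $\FIS_0$.

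Finally, preservation of identities and of composition is inherited verbatim from $\fro:\FHil\ra\FIS$, because the underlying set-maps are unchanged. The only delicate point is the treatment of the new constant, and as shown above it is settled immediately by $h(0)=0$ and $\varphi_H(0)=\emptyset$. Thus the assignments $(H,\tau)\mapsto(\FC(H),\tp)$ and $h\mapsto\hIS$ define the desired functor $\fro:\FHil_0\ra\FIS_0$, with no genuine obstacle to overcome.
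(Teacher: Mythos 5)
Your proposal is correct and follows essentially the same route the paper intends: the paper states this corollary as a consequence of the boundedness observation preceding Corollary \ref{pteob} (namely $\varphi_H(0)=\emptyset$ is the bottom of $\X(H)^{+}$ and lies in $\FC(H)$) combined with the ideas of Proposition \ref{ps5}, i.e.\ Corollary \ref{paf1} and Lemma \ref{paf2}, which is exactly what you do. You merely make explicit the details the paper leaves implicit (preservation of $0$ by $\hIS$ via $h(0)=0$), and these check out.
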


\begin{cor} \label{cor1}
The functor $\fro: \FHil_0 \ra \FIS_0$ is left adjoint to $\U$.
\end{cor}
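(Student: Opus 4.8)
The plan is to run the argument for Theorem~\ref{HiltoIS} in the bounded setting, using Corollary~\ref{pteob} in place of Theorem~\ref{pteo} and Corollary~\ref{cor0} in place of Proposition~\ref{ps5}. By Corollary~\ref{cor0} the functor $\fro:\FHil_0 \ra \FIS_0$ is already available, so the task reduces to exhibiting, for each $(H,\tau)\in\FHil_0$, a unit morphism with the appropriate universal property.

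First I would check that $\varphi_H:(H,\tau)\ra(\FC(H),\tp)$ is a morphism in $\FHil_0$. Proposition~\ref{pp} gives that $\varphi_H$ is an embedding of frontal Hilbert algebras, and the paragraph preceding Corollary~\ref{paf1} shows that $\tp$ restricts to $\FC(H)$; moreover, since $H$ is bounded, $\varphi_H(0)=\emptyset$, which is the bottom element of $\FC(H)$, so $\varphi_H$ preserves $0$ as well. Hence $\varphi_H$ is the desired unit.

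Next I would verify the universal property. Fix $(A,\sigma)\in\FIS_0$ and a morphism $h:(H,\tau)\ra\U(A,\sigma)$ in $\FHil_0$. Forgetting the meet of $A$ and the frontal operators, $h$ is in particular a homomorphism of bounded Hilbert algebras from $H$ into the bounded Hilbert reduct of $A$, so Corollary~\ref{pteob} produces a unique homomorphism $\overline{h}:\FC(H)\ra A$ of bounded implicative semilattices with $h=\overline{h}\circ\varphi_H$. It then remains only to show that $\overline{h}$ commutes with the frontal operators, i.e.\ that $\overline{h}\circ\tp=\sigma\circ\overline{h}$, so that $\overline{h}$ is in fact an $\FIS_0$-morphism.

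This last identity is the one substantive point, and it is established by exactly the computation of Lemma~\ref{paf2}, now with $\overline{h}$ in place of $\hIS$ and $\sigma$ in place of $\tdp$. Writing a typical element of $\FC(H)$ as $U=\bigcap_{i=1}^{n}\varphi_H(a_i)$, one has $\tp(U)=\bigcap_{i=1}^{n}\varphi_H(\tau(a_i))$ by Lemma~\ref{laux} together with the preservation of meets by $\tp$; then applying that $\overline{h}$ preserves meets and satisfies $\overline{h}\circ\varphi_H=h$, that $h$ preserves the frontal operator (being a $\FHil_0$-morphism), and that $\sigma$ preserves meets by Lemma~\ref{inf}, one chains these equalities to obtain $\overline{h}(\tp(U))=\sigma(\overline{h}(U))$. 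Finally, uniqueness of $\overline{h}$ in $\FIS_0$ is inherited from its uniqueness in $\IS_0$ granted by Corollary~\ref{pteob}, since any $\FIS_0$-morphism satisfying $h=\overline{h}\circ\varphi_H$ is a fortiori such an $\IS_0$-morphism. The only place where any genuine work is needed is the preservation of $\tp$ by $\overline{h}$; everything else is the assembly of the bounded adjunction of Corollary~\ref{pteob} with the bounded functor extension of Corollary~\ref{cor0}, exactly parallel to how Theorem~\ref{HiltoIS} was obtained from Theorem~\ref{pteo} and Proposition~\ref{ps5}.
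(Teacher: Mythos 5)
Your proposal is correct and takes essentially the same route as the paper, which disposes of this corollary in one sentence by citing Corollary~\ref{pteob} together with ``similar ideas'' to those behind Proposition~\ref{ps5} and Theorem~\ref{HiltoIS}. Your write-up simply makes that intended argument explicit: the bounded adjunction of Corollary~\ref{pteob} supplies the unit and the universal property, Corollary~\ref{cor0} supplies the functor, and the Lemma~\ref{paf2}-style computation (using Lemma~\ref{laux}, Lemma~\ref{inf}, and meet-preservation of $\overline{h}$) is exactly the one substantive verification the paper leaves to the reader.
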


\section{An adjunction between $\gaHil$ and $\gaIS$}\label{gaHil}

In \cite[Example 3.1]{Caici} Caicedo and Cignoli introduced and
studied some implicit compatible operations of Heyting algebras.
One of them was called the $\gamma$ operation, which is a frontal
operator that satisfies the extra conditions that we proceed to
list.

A frontal operator $\tau$ on a Heyting algebra $H$ will be called
a $\gamma$-\emph{operator} if it satisfies :
\begin{enumerate}
\item $\neg \tau(0) = 0$ \item $\tau(a) \leq a \vee \tau(0)$, for every
$a\in H$.
\end{enumerate}

In \cite[Proposition 2.4]{CSSM} it is shown that a unary map
$\gamma$ on a  Heyting algebra $H$ is a $\gamma$-operator if and
only if it satisfies  the following conditions
we find in \cite[Example 3.1]{Caici}:
\begin{enumerate}
\item $\neg \gamma(0) = 0$ \item $\gamma(0)\leq a \vee \neg a$,
\item $\gamma(a) = a \vee \gamma(0)$.
\end{enumerate}
for every $a\in H$.

Note that the condition $\gamma(a) = a \vee \gamma(0)$ can be
replaced by  $\gamma(a) \leq a \vee \gamma(0)$. If a
$\gamma$-operator $\tau$ exists, it is unique since it is
characterized by the condition
\[
\tau(a) =\; \text{min}\;\{b: \neg b \vee a\leq b\},
\]
for every $a \in H$, as it was proved in \cite{CSSM}. For this
reason, when a $\gamma$-operator exists, it is denoted by
$\gamma$. The operator $\gamma$ exists in every finite Heyting
algebra (see \cite{Caici}), however there are Heyting algebras
(necessarily infinite) where there is no $\gamma$-operator.

In \cite{CSM0} the notion of $\gamma$-operator was generalized to
the framework of bounded Hilbert algebras. As in Heyting algebras,
if  $H\in \Hil_0$ and $a\in H$ we define $\neg a: = a\ra 0$. Let
us say that a frontal operator $\tau$ on a bounded Hilbert algebra
$H$ is a $\gamma$-\emph{operator} if it satisfies for every $a,b
\in H$ the following conditions:
\begin{description}
\item[(g4)] $\neg \tau(a)\leq \tau(a)$. \item[(g5)] $\tau(a) \leq
(a\ra b) \ra ((\neg b \ra b) \ra b))$.
\end{description}

In \cite[Section 3, Proposition 7]{CSM0} it is proved that a
unary map $\tau$ on a bounded Hilbert algebra $H$ is a
$\gamma$-operator if and only if for every $a \in H$
\[
\tau(a) =\;\text{min}\;\{b\in H: \neg b\leq b\;\text{and}\;a\leq
b\}.
\]
Thus, there is at most one $\gamma$-operator on a bounded Hilbert
algebra. But $\gamma$-operators may not exist. In \cite[Section 3,
Example 14]{CSM0}, in contrast with the case of Heyting algebras,
we find examples of finite bounded Hilbert algebras that lack the
$\gamma$-operator. When a $\gamma$-operator exists, it will be
denoted, as we did for Heyting algebras, by $\gamma$.

Note that if $H\in \Hil_0$ and $b\in H$ then $\neg b\leq b$ if and
only if $\neg b = 0$. Then condition $\textbf{(g4)}$ can be
replaced by  $\neg\gamma(a) = 0$.

\begin{lem}
Let $H\in \Hil_0$ and $f:H\ra H$ a function which is monotone
w.r.t. the natural order. Then $\neg f(a) = 0$ for every $a\in H$
if and only if $\neg f(0) = 0$.
\end{lem}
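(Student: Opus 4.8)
The plan is to prove the two implications separately, with essentially all of the content lying in the backward direction.

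For the forward implication there is nothing to do beyond specializing the hypothesis. If $\neg f(a) = 0$ holds for every $a \in H$, then in particular it holds for $a = 0$, giving $\neg f(0) = 0$ at once.

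For the backward implication I would assume $\neg f(0) = 0$ and fix an arbitrary $a \in H$. The key observation is that $0$ is the bottom element of $H$, so $0 \leq a$; monotonicity of $f$ with respect to the natural order then yields $f(0) \leq f(a)$. Next I would invoke the antitonicity of implication in its first argument, namely the second half of the basic lemma on Hilbert algebras (part e)): from $f(0) \leq f(a)$ we get $f(a) \ra 0 \leq f(0) \ra 0$, that is, $\neg f(a) \leq \neg f(0)$. Combining this with the hypothesis $\neg f(0) = 0$ gives $\neg f(a) \leq 0$, and since $0$ is the least element we also have $0 \leq \neg f(a)$; antisymmetry of the natural order then forces $\neg f(a) = 0$. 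As $a$ was arbitrary, the claim follows.

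I do not expect any genuine obstacle here. The only step that requires a named input is the antitonicity of the map $x \mapsto x \ra 0$, which is exactly Lemma e); everything else is the routine interplay between the monotonicity of $f$ and the double role of $0$ as both the bottom element and the target of the negation. The lemma is really just packaging the remark made just above it, that $\neg b \leq b$ is equivalent to $\neg b = 0$ in a bounded Hilbert algebra, in a form convenient for later replacing condition \textbf{(g4)} by $\neg \gamma(a) = 0$.
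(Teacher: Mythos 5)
Your proposal is correct and matches the paper's own proof: the paper also handles only the nontrivial backward direction, using $0\leq a$, monotonicity of $f$, and antitonicity of $(\ )\ra 0$ to get $\neg f(a)\leq \neg f(0)=0$. Your extra remarks (explicitly stating the trivial forward direction and citing part e) of the basic lemma) are fine but add nothing essentially different.
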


\begin{proof}
Suppose that $\neg f(0) = 0$ and let $a\in H$. Since $0\leq a$
then $f(0)\leq f(a)$, so $\neg f(a) \leq \neg f(0) = 0$. Hence,
$\neg f(a) = 0$.
\end{proof}

The next corollary easily follows.

\begin{cor} \label{corze}
Let $H\in \Hil_0$. A frontal operator $\tau$ on $H$  is a
$\gamma$-operator if and only if $\neg \tau(0) = 0$ and condition
$\mathbf{(g5)}$ holds.
\end{cor}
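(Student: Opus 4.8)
The plan is to reduce the statement to a direct application of the Lemma proved immediately above it. By definition, a frontal operator $\tau$ on $H \in \Hil_0$ is a $\gamma$-operator precisely when it satisfies both $\mathbf{(g4)}$ and $\mathbf{(g5)}$. As observed just before the corollary, for any $b \in H$ we have $\neg b \leq b$ if and only if $\neg b = 0$; applying this with $b = \tau(a)$ shows that condition $\mathbf{(g4)}$ (namely $\neg\tau(a) \leq \tau(a)$ for every $a$) is equivalent to the condition $\neg\tau(a) = 0$ for every $a \in H$. Hence being a $\gamma$-operator is equivalent to the conjunction of ``$\neg\tau(a) = 0$ for all $a$'' together with $\mathbf{(g5)}$.

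Next I would check that $\tau$ is monotone with respect to the natural order, which is exactly the hypothesis needed to invoke the Lemma with $f = \tau$. Since $a \leq \tau(a)$ by $\mathbf{(i2)}$ and $1$ is the top element, we get $\tau(1) = 1$. Now if $a \leq b$, then $a \ra b = 1$, and $\mathbf{(i1)}$ gives $1 = \tau(1) = \tau(a \ra b) \leq \tau(a) \ra \tau(b)$, so $\tau(a) \leq \tau(b)$. Thus $\tau$ is monotone.

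With monotonicity in hand, the preceding Lemma applied to $f = \tau$ yields that $\neg\tau(a) = 0$ for every $a \in H$ if and only if $\neg\tau(0) = 0$. Substituting this equivalence into the reformulation of the $\gamma$-operator condition obtained in the first step shows that $\tau$ is a $\gamma$-operator if and only if $\neg\tau(0) = 0$ and $\mathbf{(g5)}$ holds, which is exactly the claim. Since each step is a short equivalence, there is no substantial obstacle; the only point meriting a moment's care is the verification that a frontal operator is monotone, and this follows quickly from $\mathbf{(i1)}$ and $\mathbf{(i2)}$.
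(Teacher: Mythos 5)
Your proof is correct and follows essentially the same route as the paper: the corollary is obtained by replacing $\mathbf{(g4)}$ with $\neg\tau(a)=0$ via the remark preceding it, and then applying the monotonicity lemma with $f=\tau$. The only detail the paper leaves implicit is the monotonicity of a frontal operator, which you rightly verify from $\mathbf{(i1)}$ and $\mathbf{(i2)}$ via $\tau(1)=1$.
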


We write $\gaHil$ for the algebraic category whose objects are
algebras $(H,\gamma)$, where $H\in \Hil_0$ and $\gamma$ is a
$\gamma$-operator. In a similar way we define the category
$\gaIS$.

Let $H\in \Hil_0$. For every $a\in H$ we define the set
\[
\gamma_{a} := \{b\in H:\neg b \leq b\;\text{and}\;a\leq b\}.
\]

\begin{prop}
Let $H \in \IS_0$. For every $a\in H$ the set $\gamma_{a}$ is a
filter. Moreover, if $H$ is finite then there exists the minimum
of $\gamma_a$ for every $a\in H$, i.e., $H$ has a
$\gamma$-operator.
\end{prop}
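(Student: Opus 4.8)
The plan is to prove the two claims separately. For the first claim, that $\gamma_a$ is a filter of $H\in\IS_0$, I would verify the three defining conditions of a filter in an implicative semilattice: that $1\in\gamma_a$, that $\gamma_a$ is an upset, and that $\gamma_a$ is closed under $\wedge$. The membership $1\in\gamma_a$ is immediate since $\neg 1 = 1\ra 0 = 0\leq 1$ (using $b\leq 1$ for all $b$) and $a\leq 1$. For the upset condition, suppose $b\in\gamma_a$ and $b\leq c$; then $a\leq b\leq c$ gives $a\leq c$, and from $\neg b\leq b\leq c$ together with the antitonicity of negation (namely $b\leq c$ implies $\neg c\leq \neg b$, which follows from part e) of the basic Hilbert lemma) we get $\neg c\leq \neg b\leq b\leq c$, so $c\in\gamma_a$.

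\textbf{The main work} is closure under meet. Suppose $b,c\in\gamma_a$, so $\neg b\leq b$, $\neg c\leq c$, $a\leq b$, and $a\leq c$. Then $a\leq b\we c$ is clear from the infimum property. For the negation condition I must show $\neg(b\we c)\leq b\we c$. Here I would use the implicative semilattice identity $x\ra(y\we z) = (x\ra y)\we(x\ra z)$ with $y\we z = 0$, which gives $\neg(b\we c) = (b\we c)\ra 0$; the cleaner route is the residuation identity $(x\we y)\ra z = x\ra(y\ra z)$, so that $\neg(b\we c) = b\ra(c\ra 0) = b\ra\neg c$. Now, since $\neg c\leq c$, part e) of the Hilbert lemma gives $b\ra\neg c\leq b\ra c$, but I actually want to bound this by $b\we c$. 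The key observation is that $\neg b\leq b$ means $\neg b = 0$, equivalently $b\we\neg b = 0$, which is the statement that $b$ is a \emph{regular} element in the weak sense that $\neg b=0$; since $\neg$ of the meet equals $b\ra\neg c$ and $\neg c = 0$, we in fact get $\neg(b\we c) = b\ra 0 = \neg b = 0\leq b\we c$. This makes the meet-closure routine once one passes through the equivalence $\neg x\leq x \Longleftrightarrow \neg x = 0$ recorded just before the statement.

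\textbf{For the moreover clause}, assuming $H$ finite, I would show $\min\gamma_a$ exists by taking the meet of all elements of $\gamma_a$. Since $\gamma_a$ is a filter it is nonempty (it contains $1$) and closed under finite meets; because $H$ is finite, the meet $m := \bigwedge\gamma_a$ of the whole (finite) set $\gamma_a$ is a finite meet, hence lies in $\gamma_a$ by the just-proved closure, and is by construction the least element of $\gamma_a$. By the characterization of the $\gamma$-operator recalled from \cite[Section 3, Proposition 7]{CSM0}, setting $\gamma(a) := \min\gamma_a = m$ for each $a\in H$ defines the $\gamma$-operator on $H$, so $H$ has a $\gamma$-operator.

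\textbf{The one point to watch} is that the characterization from \cite{CSM0} is stated for bounded Hilbert algebras, so I should note that any $H\in\IS_0$ is in particular in $\Hil_0$ via its reduct, and that the set $\gamma_a$ is defined purely in the $\{\ra,0,1\}$-signature; thus the cited characterization applies verbatim and yields that $a\mapsto\min\gamma_a$ is the $\gamma$-operator. I do not expect any genuine obstacle here beyond being careful that ``filter'' and ``implicative filter'' coincide in the $\IS$ setting, a fact already recorded in the excerpt.
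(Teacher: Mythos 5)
Your proof is correct and takes essentially the same route as the paper's: the heart of both arguments is reducing $b,c\in\gamma_a$ to $\neg b=\neg c=0$ and then computing $(b\we c)\ra 0 = b\ra(c\ra 0)=b\ra 0=0$ via the residuation identity. You merely spell out the parts the paper dismisses as immediate (that $1\in\gamma_a$, the upset property) or leaves implicit (the finite case via the meet of the whole filter and the minimum characterization from \cite{CSM0}), which is harmless.
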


\begin{proof}
Let $a\in H$. It is immediate that $1\in \gamma_a$ and that
$\gamma_a$ is an upset. In what follows we will show that if
$b,c\in \gamma_a$ then $b\we c\in \gamma_a$. Let $b,c\in
\gamma_a$, i.e., $\neg b\leq b$, $\neg c \leq c$, $a\leq b$ and
$a\leq c$. Thus, $\neg b = \neg c = 0$ and $a\leq b\we c$. Then
$(b\we c)\ra 0 = b\ra (c\ra 0) = b\ra 0 = 0$. Hence, $b\we c\in
\gamma_a$.
\end{proof}

The following is \cite[Section 4, Lemma 15]{CSM0}.

\begin{lem}\label{gr}
Let $(H,\gamma) \in \gaHil$. Then $\varphi(\gamma(a)) = \varphi(a)
\cup (\X(H))_M$ for every $a\in H$.
\end{lem}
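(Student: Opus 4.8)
The plan is to prove the set equality $\varphi(\gamma(a)) = \varphi(a) \cup (\X(H))_M$ by establishing the two inclusions directly at the level of irreducible implicative filters, working from the characterization $\gamma(a) = \min\{b \in H : \neg b \le b \text{ and } a \le b\}$ recalled above, together with the observation that in a bounded Hilbert algebra $\neg b \le b$ is equivalent to $\neg b = 0$. In particular $\gamma(a)$ is itself a member of the set of which it is the minimum, so $\neg \gamma(a) = 0$. Throughout I would freely use Lemma \ref{SC}, Lemma \ref{irr-tau}, and Corollary \ref{tfpc1}. Two auxiliary facts organize the argument: (F1) every $M \in (\X(H))_M$ contains each $c$ with $\neg c = 0$; and (F2) if $\gamma(0) \in P$ then $P \in (\X(H))_M$.

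Fact (F1) I would prove by a maximality argument: if $c \notin M$, then $F(M \cup \{c\})$ cannot be a proper filter, since by Corollary \ref{tfpc1} it would extend to an irreducible filter $R \supseteq M$ with $c \in R$, whence $R \supsetneq M$, contradicting the maximality of $M$; thus $F(M \cup \{c\}) = H$, and the explicit description of generated implicative filters recalled in the preliminaries yields $\neg c \in M$, so $\neg c = 0$ forces $0 \in M$, which is impossible. For the inclusion $\supseteq$ of the lemma, $\varphi(a) \subseteq \varphi(\gamma(a))$ is immediate from $a \le \gamma(a)$, while $(\X(H))_M \subseteq \varphi(\gamma(a))$ follows from (F1) applied to $c = \gamma(a)$, using $\neg \gamma(a) = 0$.

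Fact (F2) I would obtain from Lemma \ref{irr-tau} with the operator $\gamma$ and the element $0$: from $\gamma(0) \in P$ we get $\neg b \in P$ for every $b \notin P$, and then any irreducible $R \supsetneq P$ would contain some $b \in R \setminus P$ together with $\neg b \in P \subseteq R$, forcing $0 \in R$, a contradiction; hence $P$ is maximal. The inclusion $\subseteq$ is the heart of the matter: I must show that if $\gamma(a) \in P$ and $a \notin P$ then $P \in (\X(H))_M$. Assuming $P$ is not maximal, (F2) gives $\gamma(0) \notin P$, so $a$ and $\gamma(0)$ both lie outside the irreducible filter $P$; Lemma \ref{SC} then produces $c \notin P$ with $a \le c$ and $\gamma(0) \le c$. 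The decisive step is that $\gamma(0) \le c$ forces $\neg c \le \neg \gamma(0) = 0$, i.e. $c$ is dense, so by the minimality of $\gamma(a)$ among dense elements above $a$ we get $\gamma(a) \le c$; since $\gamma(a) \in P$ and $P$ is an upset, $c \in P$, contradicting $c \notin P$. I expect this last inclusion --- manufacturing, via Lemma \ref{SC}, a single dense element $c$ above both $a$ and $\gamma(0)$ and then invoking the minimality of $\gamma(a)$ --- to be the main obstacle, since it is exactly the point where the special $\gamma$-axioms (encoded in $\neg \gamma(0) = 0$ and the min-characterization) enter and where an appeal to Lemma \ref{irr-tau} alone would be insufficient.
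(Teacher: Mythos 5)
Your proof is correct, but there is nothing in the paper to compare it against: the paper does not prove this lemma, it simply cites \cite[Section 4, Lemma 15]{CSM0}, so your proposal in effect supplies a self-contained argument from the tools the paper does develop in its preliminaries. The steps all check out. Fact (F1) is the standard fact that maximal irreducible filters contain every dense element; your extraction of $\neg c \in M$ from $0 \in F(M \cup \{c\})$ is justified by the deduction-theorem computation on the explicit description of $F(X)$ (using exchange $a \ra (b \ra c) = b \ra (a \ra c)$ and contraction $a \ra (a \ra b) = a \ra b$ to collect the occurrences of $c$, then modus ponens in $M$), though you should note that invoking Corollary \ref{tfpc1} for the proper filter $F(M \cup \{c\})$ requires exhibiting an element outside it --- take $0$, since proper implicative filters of a bounded Hilbert algebra omit $0$. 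Fact (F2) is a legitimate application of Lemma \ref{irr-tau} with $a = 0$, valid because a $\gamma$-operator is by definition a frontal operator. The decisive step of the inclusion $\varphi(\gamma(a)) \subseteq \varphi(a) \cup (\X(H))_M$ --- using Lemma \ref{SC} to produce a single $c \notin P$ above both $a$ and $\gamma(0)$, observing that $\neg c \leq \neg\gamma(0) = 0$ makes $c$ dense, and then invoking the characterization $\gamma(a) = \min\{b : \neg b \leq b \text{ and } a \leq b\}$ (which the paper recalls from \cite{CSM0}) to force $\gamma(a) \leq c$ and hence $c \in P$ --- is exactly where the $\gamma$-specific axioms must enter, and you use them correctly. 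The net effect of your route is to replace an external citation by a proof that uses only Lemma \ref{SC}, Lemma \ref{irr-tau}, Corollary \ref{tfpc1}, and the min-characterization, which makes the paper's Section \ref{gaHil} self-contained.
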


Let $(P,\leq)$ be a poset. Note that if $U\in P^+$,  then $U \cup
X_M\in P^+$.

\begin{lem} \label{grb}
Let $(H,\gamma) \in \gaHil$. Then $(\X(H)^+, \gap) \in \gaIS$.
Moreover, $\gap$ takes the form $\gamma^{\pi}(U) = U \cup
(\X(H))_M$.
\end{lem}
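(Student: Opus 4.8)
The plan is to first pin down $\gap$ explicitly as $\gap(U) = U \cup (\X(H))_M$ and then read off the $\gamma$-operator conditions from this formula. Since a $\gamma$-operator is in particular a frontal operator, we have $(H,\gamma) \in \FHil$, so Proposition \ref{pp} already tells us that $\gap$ is a frontal operator on the Heyting algebra $\X(H)^+$; in particular $U \subseteq \gap(U)$ for every $U$ and $\gap$ preserves finite meets. So the work is to identify $\gap$ and to verify the two extra conditions distinguishing a $\gamma$-operator.

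The key computation, and the step I expect to carry the proof, is an explicit description of $\gamma^{-1}[P]$ for $P \in \X(H)$. Combining Lemma \ref{laux} (with $\tau = \gamma$) and Lemma \ref{gr} gives, for every $a \in H$, that $\gamma(a) \in P$ if and only if $P \in \varphi(a) \cup (\X(H))_M$, i.e.\ if and only if $a \in P$ or $P \in (\X(H))_M$. Hence $\gamma^{-1}[P] = H$ when $P$ is maximal in $\X(H)$, and $\gamma^{-1}[P] = P$ otherwise. Feeding this into the definition of $\gap$ yields the formula at once: if $P$ is maximal, then $\gamma^{-1}[P] = H$ is contained in no irreducible (hence proper) filter $Q$, so the defining clause for membership in $\gap(U)$ holds vacuously and $P \in \gap(U)$ for \emph{every} $U$; if $P$ is not maximal, then $\gamma^{-1}[P] = P$, and taking $Q = P$ shows $P \in \gap(U) \Rightarrow P \in U$, while conversely $P \in U$ forces $P \in \gap(U)$ because $U \subseteq \gap(U)$. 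Therefore $\gap(U) = U \cup (\X(H))_M$.

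To conclude that $\gap$ is a $\gamma$-operator I would invoke the Heyting-algebra characterization recalled from \cite{CSSM} (legitimate since $\X(H)^+$ is a Heyting algebra): a frontal operator $\gamma$ is a $\gamma$-operator iff $\neg\gamma(0) = 0$, $\gamma(0) \leq a \vee \neg a$, and $\gamma(a) = a \vee \gamma(0)$. Here the bottom of $\X(H)^+$ is $\emptyset$, join is union, negation is $\neg U = (U]^{c}$, and $\gap(\emptyset) = (\X(H))_M = \varphi(\gamma(0))$ (take $U = \emptyset$ in the formula and use $\varphi(0) = \emptyset$). The third condition is exactly the formula $\gap(U) = U \cup \gap(\emptyset)$. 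For the first, since $\varphi$ is a Hilbert-algebra embedding with $\varphi(0) = \emptyset$ and $\neg\gamma(0) = 0$ in $H$, we get $\neg(\X(H))_M = \varphi(\gamma(0)) \Ra \varphi(0) = \varphi(\gamma(0) \ra 0) = \varphi(\neg\gamma(0)) = \varphi(0) = \emptyset$. For the second, given $U$ and $P \in (\X(H))_M$, if $P \notin U$ then no $R \in U$ satisfies $P \subseteq R$ (maximality would force $P = R \in U$), so $P \in (U]^{c} = \neg U$; hence $(\X(H))_M \subseteq U \cup \neg U$.

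The main obstacle is really just the dichotomy for $\gamma^{-1}[P]$ in the second paragraph; once the formula $\gap(U) = U \cup (\X(H))_M$ is in hand, the verification of the $\gamma$-operator conditions is routine bookkeeping. As an alternative to citing \cite{CSSM}, one could instead apply Corollary \ref{corze} to the bounded Hilbert reduct of $\X(H)^+$, checking $\neg\gap(\emptyset) = \emptyset$ and condition $\mathbf{(g5)}$ directly; I would avoid this route, however, since $\mathbf{(g5)}$ is more cumbersome to compute in $\X(H)^+$ than the three Heyting conditions above.
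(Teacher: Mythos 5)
Your proof is correct, and it reorganizes the argument in a genuinely different (and arguably cleaner) way than the paper, even though both rest on the same two pillars: Lemma \ref{gr} and the Heyting-algebra characterization of $\gamma$-operators. The paper works in the opposite order: it first proves $\gap(\emptyset) = (\X(H))_M$ by a two-inclusion argument (using Corollary \ref{corze}, monotonicity of $\gamma$, and Lemma \ref{gr}), then shows $\neg\gap(\emptyset) = \emptyset$, then establishes $\gap(U) \subseteq U \cup \gap(\emptyset)$ by a separate contradiction argument, and only \emph{at the end} assembles these into the formula $\gap(U) = U \cup (\X(H))_M$. You instead frontload a single structural computation --- the dichotomy $\gamma^{-1}[P] = H$ if $P \in (\X(H))_M$ and $\gamma^{-1}[P] = P$ otherwise, which follows from Lemma \ref{gr} and the definition of $\varphi$ --- and this makes the formula for $\gap$ drop out of its definition immediately (using that irreducible filters are proper, so no $Q \in \X(H)$ contains $H$), after which the three $\gamma$-conditions are routine. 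In effect, each of the paper's three subclaims re-derives a piece of your dichotomy in situ, so your version localizes all the filter-theoretic work in one place. Your verification of $\neg\gap(\emptyset) = \emptyset$ via $\neg\varphi(\gamma(0)) = \varphi(\gamma(0)) \Ra \varphi(0) = \varphi(\neg\gamma(0)) = \emptyset$ is also tidier than the paper's route, which implicitly needs the separate fact that $((\X(H))_M] = \X(H)$. Two small remarks: your citation of Lemma \ref{laux} is superfluous --- the equivalence $\gamma(a) \in P \Leftrightarrow P \in \varphi(\gamma(a))$ is just the definition of $\varphi$, so Lemma \ref{gr} alone carries the dichotomy; and, like the paper, you silently use that the Heyting-algebra notion of $\gamma$-operator agrees with the bounded-Hilbert-algebra notion defining $\gaIS$ (they coincide because both are characterized by the same minimum), so this is not a gap relative to the paper's own standard of rigor.
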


\begin{proof}
We already know that $\gap$ is a frontal operator. To prove that
it is a $\gamma$-operator we first show that $\gap(\emptyset) =
(\X(H))_M$. Let $P\in (\X(H))_M$. By Corollary \ref{corze} we have
that $\neg \gamma(0)\notin P$, so there exists $Q\in \X(H)$ such
that $P\subseteq Q$ and $\gamma(0)\in Q$. Thus, $P = Q$. Since
$\gamma(0)\in Q$, then $\gamma(0)\in P$. Therefore, since $\gamma$
is monotone, for every $a \in H$, $\gamma(a) \in P$, i.e.,
$\gamma^{-1}[P] = H$. Then $P\in \gap(\emptyset)$. Thus,
$(\X(H))_M \subseteq \gap(\emptyset)$. Conversely, assume that
$P\in \gap(\emptyset)$ and that $P\notin (\X(H))_M$. It follows
from Lemma \ref{gr} that $\varphi(\gamma(0)) = (\X(H))_M$, so
$\gamma(0)\notin P$, i.e., $0\notin \gamma^{-1}[P]$. Hence, there
exists $Q\in \X(H)$ such that $\gamma^{-1}[P]\subseteq Q$, which
contradicts the fact that $P\in \gap(\emptyset)$. Then
$\gap(\emptyset) \subseteq (\X(H))_M$. Hence, $\gap(\emptyset) =
(\X(H))_M$.

Next we see that $\neg \gap(\emptyset) = \emptyset$. Since
$(\X(H))_M = \gap(\emptyset)$ then $\neg \gap(\emptyset) = \neg
(\X(H))_M$. But $\neg (\X(H)_M) = ((\X(H))_M]^c$. Since
$\varphi(0) = \emptyset$, it follows from Lemma \ref{gr} that
$((\X(H))_M] = \X(H)$. Then $\neg \gap(\emptyset) = \emptyset$.

Now we show that $\gap(U) \subseteq U \cup \gap(\emptyset)$ for
every $U\in \X(H)^+$. Let $U\in \X(H)^+$ and $P\in \gap(U)$.
Suppose that $P\notin U$ and $P\notin \gap(\emptyset)$.  Then
there exists $Q\in \X(H)$ such that $\tau^{-1}[P]\subseteq Q$,
which implies that $Q\in U$, because $P\in \gap(U)$. The fact that
$P\in \gap(U)$ together with $P\notin U$ implies that
$\tau^{-1}[P]\nsubseteq P$. Thus, there exists $a\in H$ such that
$\gamma(a)\in P$ and $a\notin P$. It follows from Lemma \ref{gr}
that $\varphi(\gamma(a)) = \varphi(a) \cup (\X(H))_M$. Since
$\gamma(a)\in P$ and $a\notin P$ then $P\in (\X(H))_M$. But
$P\subseteq Q$, so $P = Q$. However, $P\notin U$ and $Q\in U$,
which is a contradiction. Hence, $\gap(U) \subseteq U \cup
\gap(\emptyset)$. Then $\gap$ is the function $\gamma$ on
$\X(H)^+$, and therefore $(\X(H)^+, \gap) \in \gaIS$.

Finally we  see that $\gap(U) = U\cup (\X(H))_M$ for every
$U\in \X(H)^+$. Let $U\in \X(H)^+$. Since $\gap(U) \subseteq U
\cup \gap(\emptyset)$ and $\gap(\emptyset) = (\X(H))_M$ then
$\gap(U) \subseteq U \cup (\X(H))_M$. On the other hand,
$U\subseteq \gap(U)$ and $(\X(H))_M = \gap(\emptyset) \subseteq
\gap(U)$ because $\emptyset \subseteq U$. Hence, $U \cup (\X(H))_M
\subseteq \gap(U)$. Therefore, we obtain that $\gap(U) = U\cup
(\X(H))_M$.
\end{proof}

In particular, we have that if $(H,\gamma) \in \gaHil$, then
$(\FC(H),\gap)\in \gaIS$. This follows from the facts that, being
$\gap$ a frontal operator, $\FC(H)$ is closed under $\gap$ and
that in a Heyting algebra the existence of $\gamma$ is equivalent
to the existence of a frontal operator $\tau$ which satisfies that
$\neg \tau(0) = 0$ and $\tau(a) \leq a \vee \tau(0)$ for every
$a$.

The following proposition follows from Corollary \ref{cor0} and
Lemma \ref{grb}.

\begin{prop}\label{propgr}
The functor $\fro:\FHil_0 \ra \FIS_0$ can be restricted to a
functor from $\fro: \gaHil \ra \gaIS$.
\end{prop}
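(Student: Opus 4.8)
The plan is to observe that, once $\gamma$ is identified with the frontal operator $\tau$, the category $\gaHil$ is a full subcategory of $\FHil_0$ and, likewise, $\gaIS$ is a full subcategory of $\FIS_0$. Indeed, an object of $\gaHil$ is just an object $(H,\tau)$ of $\FHil_0$ whose frontal operator happens to be a $\gamma$-operator, and a $\gaHil$-morphism between two such objects is exactly a bounded Hilbert algebra homomorphism preserving the (unique) $\gamma$-operator, that is, exactly an $\FHil_0$-morphism between them; the same remarks apply verbatim to $\gaIS \subseteq \FIS_0$. Since $\fro:\FHil_0 \ra \FIS_0$ is already a functor by Corollary \ref{cor0}, it therefore suffices to check that $\fro$ carries objects of $\gaHil$ into $\gaIS$; the morphism clause and functoriality will then follow formally from this fullness.

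For the object assignment, let $(H,\gamma)\in \gaHil$. As $\gamma$ is in particular a frontal operator, $(H,\gamma)\in \FHil_0$ and $\fro(H,\gamma) = (\FC(H),\gap)$. Lemma \ref{grb} shows that on the whole Heyting algebra $\X(H)^+$ the extension $\gap$ is the $\gamma$-operator, given explicitly by $\gap(U) = U\cup (\X(H))_M$, and we already know that $\FC(H)$ is closed under the frontal operator $\gap$. Hence the restriction of $\gap$ to $\FC(H)$ is a frontal operator still satisfying $\neg\gap(\emptyset)=\emptyset$ and $\gap(U)\leq U\vee \gap(\emptyset)$, so by the characterization recalled just before the proposition it is the $\gamma$-operator of $\FC(H)$. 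Thus $(\FC(H),\gap)\in \gaIS$, which is precisely the remark stated immediately before the statement.

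Finally, for morphisms, let $h:(H_1,\gamma_1)\ra (H_2,\gamma_2)$ be a morphism in $\gaHil$. Viewing it as an $\FHil_0$-morphism, Corollary \ref{cor0} yields that $\hIS:(\FC(H_1),\gamma_{1}^{\pi})\ra (\FC(H_2),\gamma_{2}^{\pi})$ is a morphism in $\FIS_0$; in particular $\hIS$ commutes with the two frontal operators, $\hIS\circ\gamma_{1}^{\pi} = \gamma_{2}^{\pi}\circ \hIS$. By the object part these frontal operators are the respective $\gamma$-operators, so $\hIS$ is a homomorphism of $\gamma$-algebras, i.e.\ a morphism in $\gaIS$. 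Preservation of identities and of composition is inherited directly from $\fro$. The only point of genuine substance is the object assignment, and that rests entirely on Lemma \ref{grb} together with the closure of $\FC(H)$ under $\gap$, both already available; the remainder is the formal observation that $\gaHil$ and $\gaIS$ sit as full subcategories of $\FHil_0$ and $\FIS_0$.
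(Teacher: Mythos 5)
Your proof is correct and takes essentially the same route as the paper: the paper's entire justification is Corollary \ref{cor0} together with Lemma \ref{grb} and the remark immediately preceding the proposition that $(\FC(H),\gap)\in \gaIS$, with the morphism clause left implicit precisely because $\gaHil$ and $\gaIS$ sit as full subcategories of $\FHil_0$ and $\FIS_0$. Your write-up only makes that formal categorical bookkeeping explicit.
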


We also write $\U$ for the forgetful functor from $\gaIS$ to
$\gaHil$. The following result follows from Corollary \ref{cor1}
and Proposition \ref{propgr}.

\begin{cor}
The functor $\fro: \gaHil \ra \gaIS$ is left adjoint to $\U$.
\end{cor}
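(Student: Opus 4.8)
The plan is to deduce the adjunction by restricting the adjunction $\fro \dashv \U$ between $\FHil_0$ and $\FIS_0$ provided by Corollary \ref{cor1} to the full subcategories $\gaHil$ and $\gaIS$. Both $\gaHil$ and $\gaIS$ are full subcategories of $\FHil_0$ and $\FIS_0$ respectively: their objects are exactly those frontal bounded algebras whose frontal operator is a $\gamma$-operator, and all algebra homomorphisms between such objects are again morphisms. The key structural observations are that both functors restrict to these subcategories and that the unit of the adjunction does too; once these are in place, the universal property is inherited automatically.

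First I would check that $\U$ carries $\gaIS$ into $\gaHil$. If $(A, \gamma_A) \in \gaIS$, then $\gamma_A$ satisfies $\neg \gamma_A(0) = 0$ and condition $\mathbf{(g5)}$, both of which are expressed purely in the language $\{\to, 0\}$ and therefore survive forgetting the meet operation. By Corollary \ref{corze}, $\gamma_A$ is then a $\gamma$-operator on the bounded Hilbert algebra $\U(A)$, so $\U(A, \gamma_A) \in \gaHil$. On the other side, Proposition \ref{propgr} already tells us that $\fro$ restricts to a functor $\gaHil \to \gaIS$, sending $(H, \gamma)$ to $(\FC(H), \gap)$. Thus both functors restrict as required.

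Next I would verify that the unit restricts. For $(H, \gamma) \in \gaHil$ the component $\varphi_H \colon (H, \gamma) \to \U(\FC(H), \gap)$ is a homomorphism of frontal bounded Hilbert algebras, inherited from the larger adjunction. Its source $(H, \gamma)$ lies in $\gaHil$, and by the previous paragraph together with Proposition \ref{propgr} so does its target $\U(\FC(H), \gap)$. Because $\gaHil$ is full, $\varphi_H$ is already a morphism in $\gaHil$.

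Finally I would invoke the universal property. Given $(H, \gamma) \in \gaHil$, an object $(A, \gamma_A) \in \gaIS$, and a morphism $h \colon (H, \gamma) \to \U(A, \gamma_A)$ in $\gaHil$---equivalently, in $\FHil_0$ by fullness---Corollary \ref{cor1} yields a unique homomorphism $\overline{h} \colon (\FC(H), \gap) \to (A, \gamma_A)$ in $\FIS_0$ satisfying $h = \U(\overline{h}) \circ \varphi_H$. Since both $(\FC(H), \gap)$ and $(A, \gamma_A)$ lie in $\gaIS$ and $\gaIS$ is full in $\FIS_0$, this $\overline{h}$ is automatically a morphism of $\gaIS$, and its uniqueness there is inherited from its uniqueness in $\FIS_0$. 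This is precisely the universal property exhibiting $\fro$ as left adjoint to $\U$ between $\gaHil$ and $\gaIS$. I do not anticipate a genuine obstacle: the only points requiring care are the closure of $\U$ under passage to $\gaHil$, handled by the syntactic form of $\mathbf{(g4)}$ and $\mathbf{(g5)}$, and the fullness of the two subcategories, after which the argument is the standard restriction of an adjunction to full subcategories preserved by both functors.
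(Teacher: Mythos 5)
Your proposal is correct and follows essentially the same route as the paper: the paper derives this corollary directly from Corollary \ref{cor1} (the adjunction between $\FHil_0$ and $\FIS_0$) and Proposition \ref{propgr} (the restriction of $\fro$ to $\gaHil \ra \gaIS$), which is exactly the restriction-of-an-adjunction argument you spell out. Your additional checks---that $\U$ restricts because the $\gamma$-operator conditions live in the $\{\to,0\}$-reduct, and that fullness of the subcategories transfers the universal property---are precisely the details the paper leaves implicit.
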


\section{An adjunction between $\SHil$ and $\SIS$}\label{SHil}

In \cite{Kuz} Kuznetsov introduced a new operation on Heyting
algebras as an attempt to build an intuitionistic version of the
provability logic of G\"{o}del-L\"{o}b, which formalizes the
concept of provability in Peano Arithmetic. This unary operation,
which we shall call successor, was also studied by Caicedo and
Cignoli in \cite{Caici} and by Esakia in \cite{Esakia}. A unary
operation $S$ on a Heyting algebra is a \emph{successor} operation
if it satisfies for very $a \in H$ the following conditions:
\begin{enumerate}
\item $a\leq S(a)$, \item $S(a)\leq b\vee (b\ra a)$, \item
$S(a)\rightarrow a = a$.
\end{enumerate}
 The conditions $a\leq S(a)$ and
$S(a)\ra a \leq S(a)$ can be replaced by $S(a)\ra a = a$, as it
was showed in \cite{CSSM}. In \cite[Proposition 2.3]{CSSM} it was
proved  that in fact a unary map $S$ on a Heyting algebra $H$ is a
successor operation if and only if it is frontal operator that
satisfies $S(a)\ra a = a$ for every $a \in H$.

Moreover, in \cite{CSSM} it was also proved that a unary
operation $S$ on a Heyting algebra $H$ is a successor operation if
and only if for every $a \in H$ it holds that
\[
S(a) =\;\text{min}\{b:b\ra a\leq a\}.
\]
Therefore, if a successor operation exist on a Heyting algebra,
then it is unique. The successor exists in all finite Heyting
algebras (see \cite{Caici}), but there are examples of Heyting
algebras where there is no successor operation.
\vspace{3pt}

Let $(P,\leq)$ be a poset. We know that $P^+$ is a complete
Heyting algebra. In what follows we will see that the existence of
a successor function in $P^+$ can be easily described in terms of
a certain condition on $(P,\leq)$. This result provides examples
of complete Heyting algebras without a successor function.

Recall that a  poset  $(P,\leq)$  satisfies the ascending chain
condition (ACC) if every strictly ascending sequence of elements
eventually terminates. Equivalently, given any sequence $x_{1}
\leq x_{2} \leq x_{3}\leq \ldots$ there exists a natural number
$n$ such that $x_n = x_m$ for every $m\geq n$. It is known that
the (ACC) is equivalent to the following condition: every nonempty
subset of $P$ has a maximal element. Notice that straightforward
computations show that the (ACC) is also equivalent to the
following condition, which will be called (P): for every downset
$V$ of $(P,\leq)$, if $x\in V$,  then there exists $y\in V_M$ such
that $x\leq y$. Moreover, the condition (P) is equivalent to the
following one: for every downset $V$ of $(P,\leq)$ it holds that
$V = (V_M]$.

Let $(P,\leq)$ be a poset. Consider the co-derivative frontal
operator  $\tau:P^+ \ra P^+$, that, as we saw in Corollary
\ref{pmots}, satisfies that $\tau(U) = U \cup (U^c)_M$ for every $U
\in P^{+}$. It follows  that  for every $U \in P^{+}$,  $\tau(U)
\Ra U = ((U^c)_M]^c$. Therefore, $\tau$ is a successor function on
$P^+$ if and only if $(P,\leq)$ satisfies the (ACC). This property
can be also obtained from results in \cite{Kuz79}.

It naturally arises the following question: is the (ACC) satisfied
when $P^+$ has successor function? Next theorem answers it in the
positive.

\begin{thm}\label{succ-ACC}
Let $(P, \leq)$ be a poset. Then $P^{+}$ has a successor if and
only if $(P, \leq)$ satisfies $\mathrm{(ACC)}$.
\end{thm}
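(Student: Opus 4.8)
The ``if'' direction is already contained in the discussion preceding the statement: if $(P,\le)$ satisfies $\mathrm{(ACC)}$, then the co-derivative operator $\tau$ on $P^{+}$ satisfies $\tau(U)\Ra U = U$ for every $U$ (since $\tau(U)\Ra U = ((U^{c})_M]^{c}$, and $\mathrm{(ACC)}$ is equivalent to condition $(P)$, i.e.\ $(V_M] = V$ for every downset $V$), so $\tau$ is a successor and $P^{+}$ has one. So I would concentrate on the converse. The plan is to single out the special role played by the co-derivative $\tau$ among all frontal operators on $P^{+}$: I claim that $\tau$ is the \emph{largest} frontal operator on $P^{+}$, so that any successor $S$, being in particular a frontal operator, satisfies $S(U)\subseteq \tau(U)$ for every $U$. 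Feeding this into the successor identity $S(U)\Ra U = U$ will force $\tau(U)\Ra U = U$ as well, which is exactly condition $(P)$, and hence $\mathrm{(ACC)}$.

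The key step is the claim that every frontal operator $F$ on $P^{+}$ satisfies $F(U)\subseteq \tau(U) = U\cup (U^{c})_M$. To prove it I would take a point $x\in F(U)$ with $x\notin U$ and show that $x$ is maximal in $U^{c}$. Arguing by contradiction, if there were $y\in U^{c}$ with $x<y$, I would apply condition \textbf{(f3)} to the upset $V = [y)$, obtaining $F(U)\subseteq V\cup (V\Ra U)$. A direct check shows $x\notin V$ (because $y\not\le x$) and $x\notin V\Ra U$ (because $y\ge x$ and $y\in V$ but $y\notin U$), contradicting $x\in F(U)$. Hence $x\in (U^{c})_M$ and $F(U)\subseteq \tau(U)$.

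Now, given a successor $S$ on $P^{+}$, recall from the cited characterization that $S$ is a frontal operator satisfying $S(U)\Ra U = U$. By the claim, $S(U)\subseteq \tau(U)$, and since $\Ra$ is antitone in its first argument, $\tau(U)\Ra U\subseteq S(U)\Ra U = U$; as the reverse inclusion $U\subseteq \tau(U)\Ra U$ always holds, I obtain $\tau(U)\Ra U = U$ for every $U\in P^{+}$. Using $\tau(U)\Ra U = ((U^{c})_M]^{c}$ this reads $((U^{c})_M] = U^{c}$ for every upset $U$; letting $U^{c}$ range over all downsets $V$ of $(P,\le)$ yields $(V_M] = V$, which is precisely condition $(P)$, equivalent to $\mathrm{(ACC)}$. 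I expect the main obstacle to be the key claim that $\tau$ dominates every frontal operator; once the witness upset $V = [y)$ is identified the verification is routine, but recognizing that a failure of maximality of $x$ in $U^{c}$ is exactly what \textbf{(f3)} detects is the crux of the argument.
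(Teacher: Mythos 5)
Your proof is correct, and for the nontrivial direction it takes a genuinely different route from the paper's. The paper also reduces everything to comparing an arbitrary successor $S$ with the co-derivative $\tau$, but it organizes the comparison the other way around: its Claim 1 shows cheaply that $S$ dominates every frontal operator on $P^{+}$ (apply \textbf{(f3)} with $b = S(U)$ and use $S(U)\Rightarrow U = U$), and the real work goes into the reverse inclusion $S(U)\subseteq\tau(U)$, obtained by writing each upset as $U=\bigcap_{x\in U^{c}}(x]^{c}$, proving that $\tau$ preserves arbitrary intersections, that $S$ is monotone, and that $S$ and $\tau$ agree on the special upsets $(x]^{c}$; this yields $S=\tau$, hence $\tau$ is a successor and $\mathrm{(ACC)}$ follows. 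Your key claim --- that the co-derivative dominates \emph{every} frontal operator on $P^{+}$, proved pointwise with the witness $V=[y)$ --- establishes exactly the inclusion $S(U)\subseteq\tau(U)$ that costs the paper its Claims 2--4, and it does so unconditionally (no successor needs to exist), which is a nice standalone fact: $\tau$ is the greatest frontal operator on $P^{+}$. You then bypass the paper's Claim 1 altogether, since antitonicity of $\Rightarrow$ in its first argument transfers $S(U)\Rightarrow U = U$ to $\tau(U)\Rightarrow U = U$ directly, and $U\subseteq \tau(U)\Rightarrow U$ is automatic. Your argument is thus more local (no infinite intersections, no decomposition into principal downsets, no meet-preservation) and uses only \textbf{(f3)} for $S$ together with the successor equation; what the paper's longer route buys is the explicit identification $S=\tau$, which your argument recovers only a posteriori, e.g.\ from the uniqueness of the successor on a Heyting algebra. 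The three verifications in your claim ($x\notin V$ since $y\not\leq x$, $x\notin V\Rightarrow U$ witnessed by $y$ itself, and the strict inequality $x<y$ coming from non-maximality) are all sound.
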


\begin{proof}
It follows from the discussion above that if  $(P,\leq)$ satisfies
the (ACC), then $P^+$ has successor $S$ and this takes the form
$S(U) = U\cup (U^c)_M$ for every $U \in P^{+}$.

Conversely, suppose that $P^+$ has successor $S$. Let us consider
the co-derivative frontal operator $\tau:P^+ \ra P^+$. To see that
$(P,\leq)$ has the (ACC) it is enough to see that $\tau = S$. To
this end we prove first the following claims.

\textsc{Claim 1}. If $f$ is a frontal operator on $P^+$, then
$f(U) \subseteq S(U)$ for every $U\in P^+$. In order to prove it,
let $U\in P^+$. Then, since $f$ is a frontal operator we have
$f(U) \subseteq S(U) \cup (S(U) \Ra U) = S(U) \cup U = S(U)$.

\textsc{Claim 2}. If $\{U_{i}\}_{i\in I}\subseteq P^+$ then
$S(\bigcap_{i\in I} U_i) \subseteq \bigcap_{i\in I} S(U_{i})$.
This holds  because $S$ is a monotone map.

\textsc{Claim 3}. In $P^+$ the operator $\tau$ preserves arbitrary
meets. In order to show it, let $\{U_{i}\}_{i\in I} \subseteq
P^+$. The monotonicity of $\tau$ implies that $\tau (\bigcap
_{i\in I} U_{i}) \subseteq \bigcap_{i\in I} \tau(U_{i})$.
Conversely, let $x\in \bigcap_{i\in I} \tau(U_{i})$. If $x\in
\bigcap _{i\in I} U_{i}$, then $x \in \tau(\bigcap _{i\in I}
U_{i})$. On the contrary, if $x\notin \bigcap _{i\in I} U_{i}$,
let us see that $x \in  ((\bigcap _{i\in I} U_{i})^{c})_M$.  Tho
this end suppose that $x \leq y$ with $y \not \in \bigcap _{i\in
I} U_{i}$. Thus  there exists $j \in I$ such that  $y\notin
U_{j}$, and being $U_j$ an upset we have $x \not \in U_j$. Hence,
since $x \in \tau(U_j)$ then $x \in (U_{j}^{c})_M$. Therefore, $x
= y$. We conclude that $\tau (\bigcap _{i\in I} U_{i}) =
\bigcap_{i\in I} \tau(U_{i})$.

\textsc{Claim 4}: For every $x\in P$, $S((x]^{c}) =
\tau((x]^{c}) = (x]^{c} \cup \{x\}$. In order to prove it, first
note that $\tau((x]^{c}) \Ra (x]^c = (x]^c$. On the other hand, by
Claim 1 we know that $\tau((x]^{c}) \subseteq S((x]^{c})$. Using
that $\tau$ is a frontal operator on the Heyting algebra $P^+$ and
the fact that $\tau((x]^{c}) \Ra (x]^c = (x]^c$ we will prove that
$S((x]^{c}) \subseteq \tau((x]^{c})$ as follows:
\[
\begin{array}
[c]{lllll}
S((x]^{c})& \subseteq    & \tau((x]^{c}) \cup (\tau((x]^{c}) \Ra (x]^{c})&  & \\
        & =              & \tau((x]^{c}) \cup (x]^c   &  & \\
        & =              & \tau((x]^{c}).& &
\end{array}
\]
Thus, $S((x]^{c}) = \tau((x]^{c})$.

Now we will use the previous claims to show that for every $U\in
P^+$ we have that $\tau(U) = S(U)$. To this end we first note that
since obviously for every downset $V$ it holds that $V=
\bigcup_{x\in V} (x]$. Thus, for very upset $U$ we have that $U =
\bigcap_{x \in U^{c}}(x]^{c}$. Let $U\in P^+$. Then
\[
\begin{array}
[c]{lllll}
\tau(U)& \subseteq    & S(U)&  & \\
        & =           & S(\bigcap_{x\in U^c}(x]^c)   &  & \\
        & \subseteq   & \bigcap_{x\in U^c}S((x]^c)   &  & \\
        & =           & \bigcap_{x\in U^c}\tau((x]^c)   &  & \\
        & =           & \tau(\bigcap_{x\in U^c}(x]^c)   &  & \\
        & =           & \tau(U).& &
\end{array}
\]
Therefore, $\tau (U) = S(U)$, which was our aim.
\end{proof}

Theorem \ref{succ-ACC} implies that not every Heyting algebra has
a successor. Indeed, if $(P,\leq)$ is a poset that does not
satisfy the (ACC), then the Heyting algebra $P^{+}$ has no
successor. This can be used to give an example of a Heyting
algebra with successor $(H,S)$ such that the Heyting algebra
$\X(H)^+$  of the upsets of the partial order of the irreducible
implicative filters of  $H$ (which are the prime filters of $H$)
does not have successor.  For instance, if $\mathbb{N}^{0}$ is the
set of natural numbers with its inverse order, and $\oplus$ is the
ordinal sum of posets (see \cite{BD}, p. 39), then
$\mathbb{N}\oplus\mathbb{N}^{0}$ is a Heyting algebra with
successor. However, $(\X(\mathbb{N}\oplus\mathbb{N}^{0}))^{+}$ is
not a Heyting algebra with successor because the (ACC) is not
satisfied in the poset $\X(\mathbb{N}\oplus\mathbb{N}^{0})$. The
fact that $(\X(\mathbb{N}\oplus\mathbb{N}^{0}))^{+}$ is not a
Heyting algebra with successor was also mentioned in \cite{CSMam}.
\vspace{3pt}

In \cite{CSM0} the successor operation defined on Heyting algebras
was generalized to the setting of Hilbert algebras.

Let $H\in \Hil$. A unary function $S:H\ra H$ is a \emph{successor}
operation if for every $a, b \in H$ the following conditions are
satisfied:
\begin{description}
\item[(S1)] $S(a) \leq ((b\ra a)\ra b)\ra b$, \item[(S2)] $S(a)\ra
a \leq S(a)$.
\end{description}
for every $a, b \in H$.

It follows from \cite[Section 3, Corollary 4]{CSM0} and
\cite[Section 3, Proposition 5]{CSM0} that a unary function $S$ on
a Hilbert algebra $H$ is a successor operation if and only if it
is a frontal operator that satisfies the equality
\[
S(a)\ra a = a
\]
for every $a\in H$. Moreover, it was also proved in \cite{CSM0}
that a unary map $S$ on a Hilbert algebra is a successor operation
if and only if for every $a \in H$,
\[
S(a) =\;\text{min}\; \{b\in H:b\ra a \leq b\}.
\]
Thus, if there exist a successor operation on a  Hilbert algebra,
then it is unique. There are examples of finite Hilbert algebras
where no successor operation exists, see for instance
\cite[Section 3, Example 14]{CSM0}.

\begin{defn}
We say that an algebra $(H,S)$ is a \emph{Hilbert algebra with
successor} if $H$ is a Hilbert algebra and $S$ is a successor
operation.
\end{defn}

In a similar way we define implicative semilattices with
successor. We write $\SHil$ for the algebraic category of Hilbert
algebras with successor and $\SIS$ for the algebraic category of
implicative semilattices with successor.

Let $H\in \IS$. For every $a\in H$ we define the set
\[
S_{a} = \{b\in H:b\ra a\leq b\}.
\]

\begin{prop}\label{Sfil}
Let $H\in \IS$. For every $a\in H$ the set $S_{a}$ is a filter.
Moreover, if $H$ is finite then there exists the minimum of $S_a$
for every $a\in H$, i.e.,  there exists a successor function.
\end{prop}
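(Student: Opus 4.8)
The plan is to verify that $S_a$ satisfies the three defining clauses of a filter of an implicative semilattice---it contains $1$, it is an upset, and it is closed under $\we$---and then, in the finite case, to read off the minimum from meet-closedness and invoke the characterization of the successor recalled above.

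Two of the clauses are routine and I would dispatch them first. Since $1 \ra a = a \leq 1$, we have $1 \in S_a$. To see that $S_a$ is an upset, suppose $b \in S_a$ and $b \leq c$; using that $\ra$ is antitone in its first argument (item e) of the Lemma on Hilbert algebras) together with the hypothesis $b \ra a \leq b$, we obtain $c \ra a \leq b \ra a \leq b \leq c$, so $c \in S_a$.

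The one step that requires genuine work---and the place I expect the main obstacle---is closure under $\we$, which I would prove by residuation. Let $b, c \in S_a$ and put $d = (b \we c) \ra a$; the goal is $d \leq b \we c$. Residuation gives $d \we b \we c \leq a$. Reading this as $(d \we c) \we b \leq a$ yields $d \we c \leq b \ra a \leq b$, hence $d \we c \leq d \we b$; by the symmetric computation $d \we b \leq d \we c$, so that $d \we b = d \we c =: e$. Since $e = d \we c \leq c$, we have $e = e \we c = d \we b \we c \leq a$; therefore $d \we b = e \leq a$ gives $d \leq b \ra a \leq b$, and likewise $d \leq c$, whence $d \leq b \we c$ and $b \we c \in S_a$. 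The subtle point is that one cannot bound $(b \we c) \ra a$ by $b$ or by $c$ separately in an obvious way; the symmetric identity $d \we b = d \we c$ is exactly what allows the product inequality $d \we b \we c \leq a$ to be fed back through residuation to the two wanted bounds.

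For the final assertion, if $H$ is finite then $S_a$ is a nonempty finite subset closed under $\we$, so $m := \bigwedge S_a$ lies in $S_a$ and is its least element. By construction $m = \min\{b \in H : b \ra a \leq b\}$ for every $a \in H$, so by the characterization of the successor operation recalled before the definition of $\SHil$, the assignment $a \mapsto m$ is a successor operation on $H$.
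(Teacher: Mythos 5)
Your proposal is correct and follows essentially the same route as the paper: verify directly that $1\in S_a$, that $S_a$ is an upset via antitonicity of $\ra$ in its first argument, and that $S_a$ is closed under $\we$ by residuation, with the finite case read off from meet-closedness and the minimum characterization of the successor. Your meet-closure step merely reorganizes the paper's computation (the paper derives $b\we ((b\we c)\ra a)\leq c\ra a\leq c$ and then $b\we((b\we c)\ra a)\leq a$ directly, without needing your intermediate identity $d\we b=d\we c$), so the two arguments are the same in substance.
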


\begin{proof}
Suppose that $a\in H$. It is immediate that $1\in S_a$. We will
prove that $S_a$ is an upset. Let $b\leq c$ and $b\in S_a$. Thus,
$b\ra a \leq b$ and $c \ra a \leq b\ra a$. Since $b\leq c$ then
$c\ra a \leq c$, i.e., $c\in S_a$. Thus, $S_a$ is an upset.

We proceed to show that if $b,c\in S_a$, then $b\we c\in S_a$. Let
$b,c\in S_a$, i.e., $b\ra a \leq b$ and $c\ra a\leq c$. Note that
$b\we ((b\we c) \ra a) \leq c\ra a \leq c$, so $b \we ((b\we c)\ra
a)\leq b\we c$. Besides we have that $b\we ((b\we c)\ra a) \leq
(b\we c)\ra a$. Thus, $b\we ((b\we c)\ra a) \leq a$ and hence
$(b\we c)\ra a \leq b\ra a$. Since $b\ra a\leq b$, we obtain that
$(b\we c)\ra a \leq b$. Using the analogous argument we also
obtain that $(b\we c)\ra a \leq c$. Hence, $(b\we c)\ra a \leq b
\we c$ and therefore $b\we c \in S_a$.
\end{proof}

The following lemma is \cite[Section 4, Lemma 8]{CSM0}.

\begin{lem} \label{lmot}
If $(H,S)\in \SHil$, then $\varphi(S(a)) = \varphi(a) \cup
(\varphi(a)^{c})_M$ for every $a\in H$.
\end{lem}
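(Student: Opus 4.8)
The plan is to establish the two set inclusions separately, exploiting that a successor operation $S$ is in particular a frontal operator (so Lemma \ref{irr-tau} applies verbatim with $\tau = S$) and that it satisfies the defining identity $S(a)\to a = a$ for every $a\in H$, together with the separation result in Corollary \ref{tfpc3}. Throughout I would keep in mind that $a\leq S(a)$ by \textbf{(i2)}, so monotonicity of $\varphi$ already yields $\varphi(a)\subseteq \varphi(S(a))$; what remains is to identify $\varphi(S(a))\setminus\varphi(a)$ with the maximal elements $(\varphi(a)^c)_M$ of the complement inside the poset $(\X(H),\subseteq)$. This mirrors the pattern of Corollary \ref{pmots} and Lemma \ref{gr}.

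For the inclusion $\varphi(a)\cup(\varphi(a)^c)_M\subseteq \varphi(S(a))$, the first summand is handled by the remark just made, so I would take $P\in(\varphi(a)^c)_M$ and argue by contradiction, assuming $S(a)\notin P$. Since $S(a)\to a = a$ and $a\notin P$, we get $S(a)\to a\notin P$, and part (3) of Corollary \ref{tfpc3} (applied to the filter $P$, with ``$a$'' there being $S(a)$ and ``$b$'' being $a$) produces $Q\in\X(H)$ with $P\subseteq Q$, $S(a)\in Q$, and $a\notin Q$. Because $S(a)\in Q\setminus P$, this $Q$ strictly contains $P$ while still avoiding $a$, contradicting the maximality of $P$ in $\varphi(a)^c$. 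Hence $S(a)\in P$, i.e.\ $P\in\varphi(S(a))$.

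For the reverse inclusion $\varphi(S(a))\subseteq \varphi(a)\cup(\varphi(a)^c)_M$, I would take $P$ with $S(a)\in P$; if $a\in P$ there is nothing to prove, so assume $a\notin P$ and show $P$ is maximal in $\varphi(a)^c$. Given any $Q\in\X(H)$ with $P\subseteq Q$ and $a\notin Q$, suppose toward a contradiction that there is $b\in Q\setminus P$. Applying Lemma \ref{irr-tau} to the irreducible filter $P$ (with $S(a)\in P$ and $b\notin P$) gives $b\to a\in P\subseteq Q$; since $b\in Q$ and $Q$ is a filter, $a\in Q$, contradicting $a\notin Q$. Therefore $Q\subseteq P$, so $Q=P$ and $P\in(\varphi(a)^c)_M$.

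The step I expect to be the crux is the $\supseteq$ direction: the key insight is that the successor identity $S(a)\to a = a$ is precisely what turns the hypothesis $a\notin P$ into $S(a)\to a\notin P$, which is the exact form needed to invoke the separation of Corollary \ref{tfpc3} and manufacture a properly larger irreducible filter witnessing a failure of maximality. The reverse inclusion, by contrast, is a routine application of Lemma \ref{irr-tau}. A minor point to state carefully is that $(\cdot)_M$ and the complement $\varphi(a)^c$ are both taken in the poset $(\X(H),\subseteq)$, so that ``$P$ maximal in $\varphi(a)^c$'' means exactly the absence of any $Q\in\X(H)$ with $P\subsetneq Q$ and $a\notin Q$.
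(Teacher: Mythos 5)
Your proof is correct. There is, however, no in-paper argument to compare it with: the paper does not prove Lemma \ref{lmot} but quotes it from \cite[Section 4, Lemma 8]{CSM0}. Your argument is a valid, self-contained derivation from results the paper does state, and it matches the style of the proofs the paper gives elsewhere: the inclusion $\varphi(S(a))\subseteq\varphi(a)\cup(\varphi(a)^{c})_M$ is exactly the use of Lemma \ref{irr-tau} that the authors make in proving Proposition \ref{pp}, and the inclusion $(\varphi(a)^{c})_M\subseteq\varphi(S(a))$ correctly combines the successor identity $S(a)\to a=a$ with Corollary \ref{tfpc3}(3), applied with $F=P$ --- which is legitimate, since irreducible implicative filters are in particular implicative filters. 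One small simplification is available in that latter direction: the contradiction hypothesis $S(a)\notin P$ is not needed. From $a\notin P$ alone you get $S(a)\to a=a\notin P$, Corollary \ref{tfpc3}(3) then yields $Q\in\X(H)$ with $P\subseteq Q$, $S(a)\in Q$ and $a\notin Q$, and the maximality of $P$ in $\varphi(a)^{c}$ forces $Q=P$, so $S(a)\in P$ directly.
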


The next lemma will be used later.

\begin{lem} \label{ps1}
Let $(P,\leq)$ be a poset and $U_1,\ldots,U_n\in P^+$. Then
\[
\bigcap_{i=1}^{n}(U_i \cup (U_{i}^{c})_M) = (\bigcap_{i=1}^{n}
U_i) \cup (\bigcup_{i=1}^{n} U_{i}^{c})_M.
\]
\end{lem}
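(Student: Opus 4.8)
The plan is to prove the set equality $\bigcap_{i=1}^{n}(U_i \cup (U_{i}^{c})_M) = (\bigcap_{i=1}^{n} U_i) \cup (\bigcup_{i=1}^{n} U_{i}^{c})_M$ by a double inclusion argument, working elementwise with a point $x \in P$. Throughout I will use the fact, established in the proof of Corollary \ref{pmots}, that $x \in (V)_M$ (the set of maximal elements of a set $V$) means $x \in V$ and whenever $x \leq y$ with $y \in V$ we have $y = x$; and that for an upset $U$, its complement $U^c$ is a downset. The key structural observation is that $U_i \cup (U_i^c)_M = \tau(U_i)$ for the co-derivative operator $\tau$ on $P^+$, so the lemma is essentially asserting that $\tau$ preserves finite meets in the concrete pointwise form — but I would prove it directly rather than invoke that, since the right-hand side packages the maximal-element sets in a single union $(\bigcup_{i=1}^n U_i^c)_M$ rather than as $(\bigcap_i U_i)^c$.

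For the inclusion $\subseteq$, I would take $x$ in the left-hand side, so that for each $i$ we have $x \in U_i$ or $x \in (U_i^c)_M$. The plan is to split on whether $x \in \bigcap_i U_i$. If so, $x$ lies in the first part of the right-hand side and we are done. If not, there is some index $j$ with $x \notin U_j$; for that $j$ the disjunction forces $x \in (U_j^c)_M$, in particular $x \in U_j^c \subseteq \bigcup_i U_i^c$. To conclude $x \in (\bigcup_i U_i^c)_M$ I must verify maximality: suppose $x \leq y$ with $y \in \bigcup_i U_i^c$, say $y \in U_k^c$. Since $U_k$ is an upset and $x \leq y$ with $y \notin U_k$, we get $x \notin U_k$; hence for index $k$ the original disjunction gives $x \in (U_k^c)_M$, and now $x \leq y$ with $y \in U_k^c$ yields $y = x$ by maximality. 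This establishes $x \in (\bigcup_i U_i^c)_M$.

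For the reverse inclusion $\supseteq$, the easy case is $x \in \bigcap_i U_i$: then trivially $x \in U_i \cup (U_i^c)_M$ for each $i$. The substantive case is $x \in (\bigcup_i U_i^c)_M$. Here $x \in U_j^c$ for some $j$, and $x$ is maximal in $\bigcup_i U_i^c$. I need to show that for \emph{every} $i$ either $x \in U_i$ or $x \in (U_i^c)_M$. Fix $i$; if $x \in U_i$ we are done, so suppose $x \in U_i^c$. Then $x \in \bigcup_i U_i^c$, and maximality of $x$ in that union immediately gives maximality of $x$ in the smaller set $U_i^c$ (any $y \in U_i^c$ with $x \leq y$ lies in the bigger union, forcing $y = x$), so $x \in (U_i^c)_M$.

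I expect the main subtlety — though it is not really an obstacle — to be keeping careful track of which index witnesses membership versus which index is being tested for the disjunction, since the maximality condition must be transferred between the single set $U_i^c$ and the union $\bigcup_i U_i^c$ in both directions. The direction that genuinely uses the upset hypothesis is the forward inclusion, where from $x \notin U_j$ and $x \leq y$ one must deduce $x \notin U_k$ for an arbitrary index $k$ with $y \in U_k^c$; this is exactly where $U_k$ being an upset is invoked. Once that monotonicity point is isolated, the remainder is a routine case analysis.
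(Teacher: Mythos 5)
Your proof is correct, but it takes a genuinely different route from the paper. You argue elementwise by double inclusion: for $\subseteq$ you transfer maximality from a single $U_j^c$ up to the union $\bigcup_i U_i^c$ (using that each $U_k$ is an upset to propagate $x \notin U_k$ along $x \leq y$, which is indeed the only place the upset hypothesis is needed), and for $\supseteq$ you restrict maximality from the union down to each $U_i^c$, which is immediate. Both transfers are carried out correctly. The paper instead dispatches the lemma in two lines by citing machinery it has already built: by Corollary \ref{pmots} the co-derivative operator $\tau$ of the Heyting algebra $P^+$ has the form $\tau(U) = U \cup (U^c)_M$, and since $\tau$ is a frontal operator it preserves finite meets (condition \textbf{(f1)}), so $\bigcap_i \tau(U_i) = \tau\bigl(\bigcap_i U_i\bigr)$, and De Morgan's law $\bigl(\bigcap_i U_i\bigr)^c = \bigcup_i U_i^c$ gives the stated right-hand side. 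You anticipated this yourself in your opening remarks and deliberately chose the direct path. The trade-off: the paper's argument is shorter and reuses Esakia's result that the co-derivative operator is frontal, whereas yours is self-contained, needs nothing beyond the definitions of upset and maximal element, and isolates exactly which hypothesis (the sets being upsets) does the work and in which direction of the inclusion — information the paper's high-level proof conceals.
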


\begin{proof}
Consider a poset $(P,\leq)$. It follows from Corollary \ref{pmots}
that the co-derivative frontal operator $\tau$ of the Heyting
algebra $P^{+}$ is such that $\tau(U) = U \cup (U^{c})_M$, for
every $U \in P^{+}$. Since $\tau$ is a frontal operator on the
Heyting algebra $P^+$, then $\tau$ preserves finite meets.
Therefore, we obtain the desired result.
\end{proof}

The following lemma is a generalization of \cite[Lemma 7]{D}. To
prove it we introduce the following definition. Let $H\in \Hil$
and $a\in H$. We define the set
\[
\sigma(a) := \{F\in \Fil(H):a\in F\}.
\]

\begin{lem} \label{ps3}
Let $H$ be a Hilbert algebra and $a_1,\ldots,a_n\in H$. Then
\[
\bigcup_{i=1}^{n} \varphi(a_i)^c = ((\bigcup_{i=1}^{n}
\varphi(a_i)^c)_M].
\]
\end{lem}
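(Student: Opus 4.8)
The plan is to prove the nontrivial inclusion, since the reverse one is automatic. Write $W := \bigcup_{i=1}^{n} \varphi(a_i)^c$. Each $\varphi(a_i)$ is an upset of $\X(H)$, so each $\varphi(a_i)^c$ is a downset, and hence so is their union $W$. As $W_M \subseteq W$ and $W$ is a downset, we get $(W_M] \subseteq (W] = W$. Thus it suffices to show $W \subseteq (W_M]$, i.e.\ that every $P \in W$ lies below some maximal element of $W$. I would first reformulate membership: since $\varphi(a_i)^c = \{Q \in \X(H) : a_i \notin Q\}$, we have $P \in W$ if and only if $\{a_1, \ldots, a_n\} \not\subseteq P$.

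Fix $P \in W$ and consider the family $\mathcal{H}$ of all implicative filters $F$ of $H$ with $P \subseteq F$ and $\{a_1, \ldots, a_n\} \not\subseteq F$. This family is nonempty, as $P \in \mathcal{H}$. To apply Zorn's Lemma I would verify that the union of a chain $\{F_k\}$ in $\mathcal{H}$ again lies in $\mathcal{H}$. The union of a chain of implicative filters is an implicative filter containing $P$, so the only thing to check is that it still omits some $a_i$. This is the main obstacle, and it is exactly where the finiteness of the list $a_1, \ldots, a_n$ is used: setting $J_k := \{i : a_i \notin F_k\} \subseteq \{1, \ldots, n\}$, the inclusion chain among the $F_k$ gives a chain of nonempty subsets of a finite set, so there are only finitely many distinct $J_k$ and $\{i : a_i \notin \bigcup_k F_k\} = \bigcap_k J_k$ equals the smallest $J_k$, which is nonempty. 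Hence $\bigcup_k F_k \in \mathcal{H}$, and Zorn's Lemma yields a maximal element $Q$ of $\mathcal{H}$.

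It remains to show $Q \in W_M$. Since $Q \in \mathcal{H}$, there is $i_0$ with $a_{i_0} \notin Q$; by Corollary \ref{tfpc1} there exists $P' \in \X(H)$ with $Q \subseteq P'$ and $a_{i_0} \notin P'$. Then $P \subseteq Q \subseteq P'$ and $\{a_1, \ldots, a_n\} \not\subseteq P'$, so $P' \in \mathcal{H}$; maximality of $Q$ forces $P' = Q$, whence $Q$ is irreducible, so $Q \in W$. Finally, $Q$ is maximal in $W$: if $Q \subseteq Q'$ with $Q' \in \X(H)$ and $Q' \in W$, then $Q'$ is a filter containing $P$ with $\{a_1, \ldots, a_n\} \not\subseteq Q'$, so $Q' \in \mathcal{H}$, and maximality of $Q$ again gives $Q' = Q$. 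Therefore $Q \in W_M$ and $P \subseteq Q$, so $P \in (W_M]$. I expect the delicate point to be precisely the chain condition for Zorn's Lemma described above; note that working with a single index $a_{i_0}$ would produce a filter maximal among those omitting $a_{i_0}$ but not necessarily maximal in the larger set $W$, so it is essential to run the argument over filters omitting the whole finite set at once.
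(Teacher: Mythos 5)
Your proof is correct, and its skeleton coincides with the paper's: both arguments fix $P$ in the union and apply Zorn's Lemma to exactly the same family of implicative filters (those containing $P$ and omitting at least one $a_i$; the paper calls it $\Sigma$, you call it $\mathcal{H}$), then show that the resulting maximal element $Q$ is irreducible and maximal in the union. Where you genuinely diverge is in proving irreducibility of $Q$. The paper verifies the criterion of Lemma \ref{SC} directly: assuming $a, b \notin Q$ have all their common upper bounds inside $Q$, it uses the distributivity of the lattice $\Fil(H)$ to write $Q = Q \vee ([a) \cap [b)) = (Q \vee [a)) \cap (Q \vee [b))$, and maximality of $Q$ in $\Sigma$ forces every $a_i$ into both factors, hence into $Q$, a contradiction. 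You instead invoke Corollary \ref{tfpc1} to produce an irreducible $P' \supseteq Q$ with $a_{i_0} \notin P'$, observe that $P' \in \mathcal{H}$, and conclude $Q = P'$ by maximality. This is a shorter route that trades the distributivity of $\Fil(H)$ for the separation theorem (Lemma \ref{tfp}) behind Corollary \ref{tfpc1}; both tools are established in the paper's preliminaries, so either choice is legitimate. Two further points in your favour: you spell out the chain condition for Zorn's Lemma (your pigeonhole argument on the index sets $J_k$ is precisely the ``straightforward computation'' the paper glosses over, and it is indeed where the finiteness of the list enters), and your closing remark is apt --- a filter maximal among those omitting a single fixed $a_{i_0}$ need not be maximal in $W$, which is exactly why the family must be defined by the simultaneous condition on the whole finite set.
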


\begin{proof}
Assume that  $P \in \bigcup_{i=1}^{n} \varphi(a_i)^c$. Consider the set
\[
\Sigma = \{Q\in \Fil(H):P\subseteq Q\;\text{and}\;Q\in
\bigcup_{i=1}^{n} \sigma(a_i)^c\}.
\]
Since $P\in \Sigma$ then $\Sigma \neq \emptyset$. Straightforward
computations show that if $\{Q_i\}_{i\in I}$ is a chain of
elements in $\Sigma$, then $\bigcup_{i \in I}Q_i \in \Sigma$, so
by Zorn's lemma there exists a maximal element in $\Sigma$, which
will be denoted by $Q$. In what follows we see that $Q\in \X(H)$
by using Lemma \ref{SC}. Let $a,b\notin Q$. We prove that there
exists $c\notin Q$ such that $a\leq c$ and $b\leq c$. Assume the
contrary, i.e., that for every $c\in H$ it holds that if $a\leq c$
and $b\leq c$, then $c\in Q$. Thus, $[a)\cap [b) \subseteq Q$,
i.e., $Q = Q \vee ([a)\cap [b))$. Since the lattice of implicative
filters of $H$ is distributive (\cite[Theorem 6]{D}) then $Q =
(Q\vee [a))\cap (Q\vee [b))$ (note that $Q\vee [a) = F(Q \cup
\{a\})$ and $Q\vee [b) = F(Q \cup \{b\})$). Since $P\subseteq
Q\subset Q\vee [a)$ and $P\subseteq Q\subset Q\vee [b)$ then it
follows from the maximality of $Q$ that $Q\vee [a), Q\vee [b)
\notin \Sigma$. In particular, we obtain that $a_1,\ldots,a_n \in
(Q\vee [a))\cap (Q\vee [b)) = Q$, which is a contradiction. Hence,
$Q\in \X(H)$ and, moreover, $Q\in \bigcup_{i=1}^{n}
\varphi(a_i)^c$. We prove that $Q$ is maximal in this set. To this
end, assume that $Q'\in \X(H)$ is such that $Q\subseteq Q'$ and
$Q'\in \bigcup_{i=1}^{n} \varphi(a_i)^c$. In particular, $Q'\in
\bigcup_{i=1}^{n} \sigma(a_i)^c$, so $Q'\in \Sigma$. Thus, $Q =
Q'$. Therefore, $P\subseteq Q$ with $Q\in (\bigcup_{i=1}^{n}
\varphi(a_i)^c)_M$, and hence $P \in((\bigcup_{i=1}^{n}
\varphi(a_i)^c)_M]$. We conclude that $\bigcup_{i=1}^{n}
\varphi(a_i)^c \subseteq ((\bigcup_{i=1}^{n} \varphi(a_i)^c)_M]$.
The converse inclusion is immediate.
\end{proof}

It is immediate to see that for every poset $(P,\leq)$ and $U\in
P^+$,  $U \cup (U^{c})_M \in P^+$.

\begin{lem} \label{ps4}
Let $(H,S) \in \SHil$. Then $(\X(H)^+, \Sp) \in \SIS$. Moreover,
$\Sp$ takes the form $\Sp(U) = U \cup (U^c)_M$.
\end{lem}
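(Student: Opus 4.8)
The plan is to reduce everything to the elements of $\FC(H)=\langle\varphi[H]\rangle$, that is, to the sets $U=\varphi(a_1)\cap\cdots\cap\varphi(a_n)$, and to lean on two facts already in hand. Because a successor operation is precisely a frontal operator satisfying $S(a)\to a=a$, the map $S$ is in particular frontal; hence Proposition~\ref{pp} (with $\tau=S$) shows that $\Sp$ is a frontal operator on $\X(H)^{+}$ and preserves finite meets, and Lemma~\ref{laux} gives $\Sp(\varphi(a))=\varphi(S(a))$ for every $a\in H$. Granting these, the proof splits into (a) establishing the closed form for $\Sp(U)$ and (b) verifying the successor equation $\Sp(U)\Ra U=U$.

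For (a), take $U=\varphi(a_1)\cap\cdots\cap\varphi(a_n)$. Meet-preservation of $\Sp$ together with $\Sp(\varphi(a_i))=\varphi(S(a_i))$ gives $\Sp(U)=\bigcap_{i=1}^{n}\varphi(S(a_i))$, which already shows that $\FC(H)$ is closed under $\Sp$. Rewriting each factor by Lemma~\ref{lmot} as $\varphi(S(a_i))=\varphi(a_i)\cup(\varphi(a_i)^{c})_{M}$ and distributing with Lemma~\ref{ps1}, I obtain
\[
\Sp(U)=\bigcap_{i=1}^{n}\bigl(\varphi(a_i)\cup(\varphi(a_i)^{c})_{M}\bigr)=\Bigl(\bigcap_{i=1}^{n}\varphi(a_i)\Bigr)\cup\Bigl(\bigcup_{i=1}^{n}\varphi(a_i)^{c}\Bigr)_{M}=U\cup(U^{c})_{M},
\]
since $U^{c}=\bigcup_{i=1}^{n}\varphi(a_i)^{c}$. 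This is the asserted formula.

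For (b), I would feed the closed form into the implication formula~(\ref{imp}). As $\Sp(U)\cap U^{c}=(U\cup(U^{c})_{M})\cap U^{c}=(U^{c})_{M}$, this yields $\Sp(U)\Ra U=(\Sp(U)\cap U^{c}]^{c}=((U^{c})_{M}]^{c}$. Everything thus comes down to the identity $((U^{c})_{M}]=U^{c}$, which is exactly Lemma~\ref{ps3} applied to $U^{c}=\bigcup_{i=1}^{n}\varphi(a_i)^{c}$. Consequently $\Sp(U)\Ra U=(U^{c})^{c}=U$, and by the characterization of successor operations as frontal operators with $S(a)\to a=a$ we conclude that $\Sp$ is a successor. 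Together with the closure and meet-preservation already noted, this gives $(\FC(H),\Sp)\in\SIS$.

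I expect the genuine obstacle to be the identity $((U^{c})_{M}]=U^{c}$, which asserts that every element of the downset $U^{c}$ lies below a maximal element of $U^{c}$. For an arbitrary upset this can fail --- it is the ascending-chain phenomenon of Theorem~\ref{succ-ACC}, witnessed by $\mathbb{N}\oplus\mathbb{N}^{0}$ --- so it is essential that the relevant $U$ be \emph{finite} meets of the $\varphi(a_i)$, making $U^{c}$ a finite union of complements $\varphi(a_i)^{c}$. Lemma~\ref{ps3}, the Hilbert-algebra generalization of Diego's lemma obtained through a Zorn's-lemma construction of a maximal irreducible filter above a prescribed one, is tailored precisely to this case; this is the one step where the argument uses more than formal manipulation of $\Sp$. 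Once the closed form, meet-preservation, frontality, and the equation $\Sp(U)\Ra U=U$ are assembled, membership in $\SIS$ is immediate.
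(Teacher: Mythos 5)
Your proof is correct and follows essentially the same route as the paper's: frontality and meet-preservation of $\Sp$ via Proposition \ref{pp} and Lemma \ref{laux}, the closed form $\Sp(U)=U\cup(U^{c})_M$ via Lemmas \ref{lmot} and \ref{ps1}, and the successor equation $\Sp(U)\Ra U=U$ via Lemma \ref{ps3}. Your closing observation is also on target: like the paper's own proof, your argument establishes the result for $\FC(H)$ rather than for all of $\X(H)^{+}$ as the lemma's statement literally reads, and indeed by Theorem \ref{succ-ACC} the literal statement cannot hold in general (e.g.\ for $H=\mathbb{N}\oplus\mathbb{N}^{0}$), so the restriction to finite intersections of the $\varphi(a_i)$ is exactly where Lemma \ref{ps3} does its work.
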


\begin{proof}
Suppose that $U\in \FC(H)$; so there exist $a_1,\ldots,a_n\in H$ such that
$U = \bigcap_{i=1}^{n} \varphi(a_i)$. The frontal operator $\Sp$
on $\X(H)^{+}$ applied to $U$ gives that
\[
\Sp(U) = \bigcap_{i = 1}^{n} \varphi(S(a_i)).
\]
Then Lemma \ref{lmot} implies
\[
\Sp(U) = \bigcap_{i = 1}^{n} (\varphi(a_i) \cup (\varphi(a_i)^{c})_M).
\]
Now using Lemma \ref{ps1} we obtain that
\[
\Sp(U) = \bigcap_{i = 1}^{n} \varphi(a_i) \cup \bigcap_{i = 1}^{n} (\varphi(a_i)^{c})_M.
\]
Therefore, $\Sp(U) = U \cup (U^{c})_M$. Since $\Sp$ is a frontal
operator, to show that it is a successor we only need to prove
that $\Sp(U)\Ra U = U$. This holds if and only if $(U^{c}_M]^c =
U$. But it follows from Lemma \ref{ps3} that $(U^{c}_M]^c = U$.
Therefore, the implicative semilattice $\FC(H)$ has successor
$\Sp$ and  $\Sp(U) = U \cup (U^{c})_M$.
\end{proof}

The following proposition follows from Proposition \ref{ps5} and
Lemma \ref{ps4}.

\begin{prop}\label{propSuc}
The functor $\f:\Hil \ra \IS$ can be extended to a functor from
$\fro: \SHil \ra \SIS$.
\end{prop}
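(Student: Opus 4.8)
The plan is to exploit the fact, recorded earlier from \cite{CSM0}, that a successor operation on a Hilbert algebra is in particular a frontal operator (it is a frontal operator satisfying $S(a)\ra a = a$). Consequently $\SHil$ is identified with the full subcategory of $\FHil$ whose objects $(H,\tau)$ have $\tau$ a successor operation, and likewise $\SIS$ sits inside $\FIS$; moreover the morphisms coincide, since preserving the successor is the same as preserving the associated frontal operator. The whole task then reduces to checking that the functor $\fro:\FHil\ra\FIS$ of Proposition \ref{ps5} restricts and corestricts to these subcategories, after which functoriality is inherited for free.

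First I would treat objects. Let $(H,S)\in\SHil$. Since $S$ is in particular a frontal operator, $(H,S)\in\FHil$, so Proposition \ref{ps5} applies and gives $\fro(H,S)=(\FC(H),\Sp)\in\FIS$, where $\Sp$ is the restriction to $\FC(H)$ of the operator $\tp$ attached to $\tau=S$. The content of Lemma \ref{ps4} is exactly that this $\Sp$ is not merely a frontal operator but a successor operation on $\FC(H)$, described by $\Sp(U)=U\cup(U^{c})_M$ and satisfying $\Sp(U)\Ra U=U$. Hence $(\FC(H),\Sp)\in\SIS$, so $\fro$ sends objects of $\SHil$ to objects of $\SIS$.

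Next I would treat morphisms. Let $h:(H_1,S_1)\ra(H_2,S_2)$ be a morphism in $\SHil$. As preserving the successor is the same as preserving the frontal operator, $h$ is also a morphism $(H_1,S_1)\ra(H_2,S_2)$ in $\FHil$, so Proposition \ref{ps5} yields that $\fro(h)=\hIS$ is a morphism in $\FIS$; that is, $\hIS:\FC(H_1)\ra\FC(H_2)$ is a homomorphism of implicative semilattices commuting with the frontal operators, $\hIS\circ S_1^{\pi}=S_2^{\pi}\circ\hIS$. By the object step $S_1^{\pi}$ and $S_2^{\pi}$ are precisely the successor operations on $\FC(H_1)$ and $\FC(H_2)$, so this same equality says that $\hIS$ is a morphism in $\SIS$.

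Finally, preservation of identities and of composition is inherited directly from $\fro:\FHil\ra\FIS$, because $\SHil$ and $\SIS$ are subcategories of $\FHil$ and $\FIS$ sharing the same identities and composition; thus the corestriction of $\fro$ to $\SHil$ is again a functor, now valued in $\SIS$. I do not expect a genuine obstacle: the argument rests on the single structural observation that successor operations are frontal operators, after which the object-level claim is exactly Lemma \ref{ps4} and the morphism-level claim is exactly the $\FIS$-morphism property supplied by Proposition \ref{ps5}. The only point meriting a careful (but already completed) check is that the operator produced by $\fro$ on $\FC(H)$ really is the successor rather than some other frontal operator, which is why Lemma \ref{ps4}---computing $\Sp(U)=U\cup(U^{c})_M$ and verifying $\Sp(U)\Ra U=U$---is the crux of the whole extension.
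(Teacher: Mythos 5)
Your proposal is correct and follows exactly the paper's own route: the paper derives Proposition~\ref{propSuc} precisely by combining Proposition~\ref{ps5} (which handles morphisms and functoriality at the frontal level) with Lemma~\ref{ps4} (which guarantees that the induced operator $\Sp$ on $\FC(H)$ is a genuine successor, $\Sp(U)=U\cup(U^{c})_M$ with $\Sp(U)\Ra U=U$). You have simply spelled out the object/morphism bookkeeping that the paper leaves implicit, including the correct observation that successor-preserving homomorphisms coincide with frontal-operator-preserving ones.
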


We also write $\U$ for the forgetful functor from $\SIS$ to
$\SHil$. The following corollary follows from Theorem
\ref{HiltoIS} and Proposition \ref{propSuc}.

\begin{cor}
The functor $\fro: \SHil \ra \SIS$ is left adjoint to $\U$.
\end{cor}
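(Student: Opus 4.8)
The plan is to observe that $\SHil$ and $\SIS$ are \emph{full} subcategories of $\FHil$ and $\FIS$, and that the adjunction of Theorem \ref{HiltoIS} restricts along these inclusions. Recall that a successor operation on a Hilbert algebra (resp.\ implicative semilattice) is exactly a frontal operator $S$ satisfying $S(a)\ra a = a$ for every $a$; hence every object of $\SHil$ is an object of $\FHil$, every object of $\SIS$ is an object of $\FIS$, and in both cases a morphism is simply an algebra homomorphism commuting with the operator. Since the successor coincides with the underlying frontal operator, the $\SIS$-morphisms between two objects of $\SIS$ are precisely the $\FIS$-morphisms between them, so the inclusion $\SIS \hookrightarrow \FIS$ is full, and likewise for $\SHil \hookrightarrow \FHil$. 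Proposition \ref{propSuc} shows that $\fro$ carries $\SHil$ into $\SIS$, while the forgetful functor $\U$ carries $\SIS$ into $\SHil$, because the Hilbert reduct of an implicative semilattice with successor retains the same successor; thus both $\fro$ and $\U$ restrict to functors between the two subcategories.

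With this in place, first I would check that the unit of the adjunction survives the restriction. For $(H,S)\in \SHil$, Lemma \ref{ps4} gives $(\FC(H),\Sp)\in \SIS$, so its reduct $\U(\FC(H),\Sp)$ lies in $\SHil$; as $\varphi_H$ is the component at $(H,S)$ of the unit of the adjunction of Theorem \ref{HiltoIS}, it is then a morphism $\varphi_H:(H,S)\ra \U(\FC(H),\Sp)$ of $\SHil$. Next I would verify the universal property. Fix $(A,S_A)\in \SIS$ together with a morphism $h:(H,S)\ra \U(A,S_A)$ of $\SHil$. Regarding $(A,S_A)$ as an object of $\FIS$ and $h$ as a morphism of $\FHil$, Theorem \ref{HiltoIS} produces a unique $\FIS$-morphism $\overline{h}:(\FC(H),\Sp)\ra (A,S_A)$ with $h=\U(\overline{h})\circ \varphi_H$. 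Since both $(\FC(H),\Sp)$ and $(A,S_A)$ are objects of $\SIS$ and the inclusion $\SIS\hookrightarrow\FIS$ is full, $\overline{h}$ is already a morphism of $\SIS$, and its uniqueness as such follows from its uniqueness in $\FIS$. This yields the required universal property and hence the adjunction $\fro\dashv\U$ between $\SHil$ and $\SIS$.

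The proof is essentially formal, so no step is genuinely hard; the only point meriting attention---and the nearest thing to an obstacle---is the verification that the mediating map $\overline{h}$ supplied by the frontal adjunction is automatically a homomorphism of algebras with successor. This is immediate once one notes that on the objects in play the successor is the very frontal operator with which $\overline{h}$ already commutes, so the fullness of $\SIS\hookrightarrow\FIS$ does all the work and no separate computation with $\Sp$ or $S_A$ is needed.
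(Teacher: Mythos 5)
Your proof is correct and follows essentially the same route as the paper: the paper derives this corollary directly from Theorem \ref{HiltoIS} together with Proposition \ref{propSuc} (which rests on Lemma \ref{ps4}), exactly the two ingredients you use. Your write-up merely makes explicit the routine categorical fact that an adjunction restricts along full subcategories closed under the two functors, which the paper leaves implicit.
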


\section{An adjunction between $\GabHil$ and
$\GabIS$}\label{GabHil}

In \cite[Example 5.3]{Caici} Caicedo and Cignoli studied an
example of implicit compatible operation of Heyting algebras,
which was considered by Gabbay in \cite{Gabbay1}. When it exists,
it is also a case of a frontal operator.

Let $H$ be a Heyting algebra. A unary map $G$ on $H$ is a
\emph{Gabbay function} ($G$-function for short) if the following
conditions are satisfied for every $a, b \in H$:
\begin{enumerate}
\item $G(a)\leq b \vee (b\ra a)$, \item $a\ra b\leq G(a)\ra G(b)$,
\item $a\leq G(a)$, \item $G(a)\leq \neg \neg a$, \item  $G(a)\ra
a \leq \neg \neg a \ra a$.
\end{enumerate}
In \cite{CSSM} it was proved that a unary map $G$ on a Heyting
algebra is a $G$-function if and only if for every $a \in H$,
\[
G(a) = \;\text{min}\;\{b\in H:(b\ra a) \we \neg \neg a \leq b\}.
\]
Thus, if there is a $G$-function on a Heyting algebra,  then it is
unique. In every finite Heyting algebra there exists the function
$G$ (see \cite{Caici}). However, there are examples of Heyting
algebras where no Gabbay function exists. In \cite[Proposition
2.7]{CSSM} it was also proved that a $G$-function of a Heyting
algebra is a frontal operator.

In \cite{CSM0} the notion of Gabbay function was generalized to
the framework of bounded Hilbert algebras. Let $H\in \Hil_0$. We
say that function $G:H\ra H$ is a $G$-\emph{function} if the
inequalities $\mathbf{(i2)}$, $\mathbf{(i3)}$ hold as well as the
following additional ones:
\begin{description}
\item[(G4)] $Ga\leq \neg \neg a$, \item[(G5)] $Ga \ra a \leq \neg
\neg a\ra a$.
\end{description}
Let $H\in \Hil_0$. It follows from \cite[Section 3, Corollary
19]{CSM0} that a unary map $G$ is a $G$-function if and only if it
is a  frontal operator that satisfies the additional conditions
$\textbf{(G4)}$ and $\textbf{(G5)}$. It is interesting to note
that in \cite[Section 3, Proposition 10]{CSM0} it was proved that
a $G$-function is also characterized by the fact that for every $a
\in H$
\[
G(a) = \;\text{min}\; \{b\in H: b\ra a\leq \neg \neg a \ra b\}.
\]
There are examples of finite Hilbert algebras where no
$G$-function exists, see for instance \cite[Section 3, Example
15]{CSM0}.

We write $\GabHil$ for the algebraic category whose objects are
algebras $(H,G)$, where $H\in \Hil_0$ and $G$ is a $G$-function.
In a similar way we define $\GabIS$.\vspace{10pt}

Let $H\in \Hil_0$. For every $a\in H$ we define $G_{a} := \{b\in
H:b\ra a\leq \neg \neg a \ra b\}$. If $H\in \IS_0$ then $G_a =
\{b\in H:\neg \neg a \we (b\ra a)\leq b\}$.

\begin{prop}
Let $H\in \IS_0$. For every $a\in H$ the set $G_{a}$ is a filter.
Moreover, if $H$ is finite, then there exists the minimum of $G_a$
for every $a\in H$, i.e., there exists a $G$-function.
\end{prop}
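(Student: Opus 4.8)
The plan is to follow the pattern of Proposition \ref{Sfil} and of the analogous result for $\gamma_a$: first I would show that $G_a$ is a filter (a nonempty upset closed under $\we$), and then deduce the finite case from the fact that a nonempty finite filter has a least element, together with the characterization $G(a)=\min\{b:\neg\neg a\we (b\ra a)\leq b\}$ of the $G$-function recalled above for the implicative semilattice setting.

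I would dispatch the two easy conditions first. Membership $1\in G_a$ is immediate, since $\neg\neg a\we (1\ra a)\leq 1$ holds trivially. For the upset property, suppose $b\in G_a$ and $b\leq c$; using that $\ra$ is antitone in its first argument gives $c\ra a\leq b\ra a$, hence $\neg\neg a\we (c\ra a)\leq \neg\neg a\we (b\ra a)\leq b\leq c$, so $c\in G_a$.

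The main work, which I expect to be the only delicate point, is closure under $\we$. Let $b,c\in G_a$ and set $e:=\neg\neg a\we ((b\we c)\ra a)$; the goal is $e\leq b\we c$, and I would obtain it by showing $e\leq b$ and $e\leq c$. The key identity is $(b\we c)\ra a=b\ra (c\ra a)=c\ra (b\ra a)$, valid in every implicative semilattice. From it, modus ponens gives $b\we ((b\we c)\ra a)\leq c\ra a$, whence $b\we e\leq \neg\neg a\we (c\ra a)\leq c$ because $c\in G_a$; since also $b\we e\leq b$, we get $b\we e\leq b\we c$. As moreover $b\we e\leq (b\we c)\ra a$, it follows that $b\we e\leq (b\we c)\we ((b\we c)\ra a)\leq a$, so residuation yields $e\leq b\ra a$; then $e=\neg\neg a\we e\leq \neg\neg a\we (b\ra a)\leq b$, using $e\leq \neg\neg a$ and $b\in G_a$. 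The symmetric computation, now invoking $c\in G_a$, gives $e\leq c$. Hence $e\leq b\we c$ and therefore $b\we c\in G_a$.

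Finally, when $H$ is finite the filter $G_a$ is a nonempty finite subset closed under $\we$, so the meet of all its elements again lies in $G_a$ and is its minimum. Setting $G(a):=\min G_a$ for every $a\in H$ then defines a $G$-function, by the characterization of $G$-functions recalled above. The closure under $\we$ is the substantive step; the finiteness conclusion and the two easy filter conditions are routine.
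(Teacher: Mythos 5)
Your proposal is correct and follows essentially the same route as the paper's proof: the same easy verification of $1\in G_a$ and the upset property, and the same key computation for closure under $\we$ (using $(b\we c)\ra a = b\ra(c\ra a)$, modus ponens, and residuation to bound $\neg\neg a\we((b\we c)\ra a)$ first by $b$ via $c\in G_a$ and then symmetrically by $c$). Your explicit treatment of the finite case via $G(a):=\min G_a$ and the minimum characterization is the routine step the paper leaves implicit.
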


\begin{proof}
Let $a\in H$. It is immediate that $1\in G_a$. We will prove that
$G_a$ is an upset. Let $b\leq c$ and $b\in G_a$. Then $\neg \neg a
\we (b\ra a) \leq b$ and $c \ra a \leq b\ra a$. Taking into
account that $b\leq c$ we deduce that $\neg \neg a \we (c\ra a)
\leq c$, i.e., $c\in G_a$. Thus, $G_a$ is an upset.

We proceed to show that if $b,c\in G_a$ then $b\we c\in
G_a$. Let $b,c\in G_a$, i.e., $\neg \neg a \we (b\ra a) \leq b$
and $\neg \neg a \we (c\ra a)\leq c$. Note that
\[
\neg \neg a \we (b\we ((b\we c) \ra a)) \leq \neg \neg a \we (c\ra
a) \leq c,
\]
so we get
\[
\neg \neg a \we b \we ((b\we c)\ra a)\leq b\we c.
\]
Moreover, $\neg \neg a \we b\we ((b\we c)\ra a) \leq (b\we c)\ra
a$. Thus, $\neg \neg a \we b\we ((b\we c)\ra a) \leq a$. It follows that
$\neg \neg a \we ((b\we c)\ra a) \leq b\ra a$. Hence, using that $b \in G_a$,
\[
\neg \neg a \we ((b\we c)\ra a) \leq \neg \neg a \we (b\ra a) \leq
b,
\]
Using the analogous
argument, we have $\neg \neg a \we ((b\we c)\ra a) \leq c$. Therefore,
$\neg \neg a \we ((b\we c)\ra a) \leq b \we c$, i.e., $b\we c \in
G_a$.
\end{proof}

The following is \cite[Section 3, Lemma 17]{CSM0}.

\begin{lem} \label{Gabr}
Let $(H,G) \in \GabHil$. Then $\varphi(G(a)) = \varphi(a) \cup
(\varphi(\neg \neg a)\cap(\varphi(a)^{c})_M)$ for every $a\in H$.
\end{lem}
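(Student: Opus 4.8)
The plan is to establish the two set-inclusions separately, reading $P \in \varphi(G(a))$ simply as $G(a) \in P$ for $P \in \X(H)$, and recalling that a $G$-function is in particular a frontal operator, so that Lemma \ref{irr-tau} applies with $\tau = G$.

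For $\varphi(a) \cup (\varphi(\neg \neg a) \cap (\varphi(a)^c)_M) \subseteq \varphi(G(a))$: the inclusion $\varphi(a) \subseteq \varphi(G(a))$ is immediate from \textbf{(i2)}, since $a \leq G(a)$ and every $P \in \X(H)$ is an upset. The real work is $\varphi(\neg \neg a) \cap (\varphi(a)^c)_M \subseteq \varphi(G(a))$. Here I would fix $P \in \X(H)$ with $\neg \neg a \in P$, $a \notin P$, and $P$ maximal among irreducible filters not containing $a$. Since $\neg \neg a \in P$ while $a \notin P$, modus ponens inside the filter forces $\neg \neg a \ra a \notin P$; as $G(a) \ra a \leq \neg \neg a \ra a$ by \textbf{(G5)}, also $G(a) \ra a \notin P$. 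Assume for contradiction that $G(a) \notin P$ and set $Q := \{b \in H : G(a) \ra b \in P\}$. Using the identity $G(a) \ra (b \ra c) = (G(a) \ra b) \ra (G(a) \ra c)$ one checks routinely that $Q$ is an implicative filter; it contains $P$ and $G(a)$, so $P \subsetneq Q$, and $a \notin Q$ precisely because $G(a) \ra a \notin P$. Corollary \ref{tfpc1} applied to $Q$ and $a$ then yields $Q' \in \X(H)$ with $Q \subseteq Q'$ and $a \notin Q'$, whence $P \subsetneq Q'$ contradicts the maximality of $P$. Therefore $G(a) \in P$.

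For the reverse inclusion $\varphi(G(a)) \subseteq \varphi(a) \cup (\varphi(\neg \neg a) \cap (\varphi(a)^c)_M)$, I would take $P \in \X(H)$ with $G(a) \in P$. If $a \in P$ then $P \in \varphi(a)$ and we are done. Otherwise $a \notin P$, and then $\neg \neg a \in P$ by \textbf{(G4)} (as $G(a) \leq \neg \neg a$), so it remains to verify $P \in (\varphi(a)^c)_M$. Given any $Q \in \X(H)$ with $P \subseteq Q$ and $a \notin Q$, any element $c \in Q \setminus P$ would yield, by Lemma \ref{irr-tau} applied with $\tau = G$ (using $G(a) \in P$ and $c \notin P$), that $c \ra a \in P \subseteq Q$; combined with $c \in Q$ this gives $a \in Q$, a contradiction. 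Hence $Q = P$, so $P$ is maximal in $\varphi(a)^c$.

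The main obstacle is the inclusion $\varphi(\neg \neg a) \cap (\varphi(a)^c)_M \subseteq \varphi(G(a))$, where the hypothesis $\neg \neg a \in P$ and the maximality of $P$ have to be used together, and where the specific property \textbf{(G5)} of a Gabbay function (beyond its being a mere frontal operator) enters, through the construction of the enlarged filter $Q$ and its extension to an irreducible filter still avoiding $a$.
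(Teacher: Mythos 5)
Your proof is correct. Note, however, that the paper itself offers no proof of this lemma: it is stated with the bare citation \cite[Section 3, Lemma 17]{CSM0}, so there is no internal argument to compare yours with, and what you have produced is a self-contained proof from tools the paper does develop. Every step checks out. The inclusion $\varphi(a)\subseteq\varphi(G(a))$ follows from $\mathbf{(i2)}$ and upward closure of irreducible filters. In the hard inclusion, from $\neg\neg a\in P$ and $a\notin P$ you correctly deduce $\neg\neg a\ra a\notin P$ (otherwise modus ponens puts $a$ in $P$), hence $G(a)\ra a\notin P$ by $\mathbf{(G5)}$; your set $Q=\{b\in H: G(a)\ra b\in P\}$ is indeed an implicative filter ($1\in Q$, and closure under modus ponens follows from the Hilbert-algebra identity $x\ra(y\ra z)=(x\ra y)\ra(x\ra z)$), it is exactly the filter generated by $P\cup\{G(a)\}$, it contains $P$ properly (since $b\leq G(a)\ra b$ for all $b$, and $G(a)\in Q\setminus P$) and avoids $a$ precisely because $G(a)\ra a\notin P$, so Corollary \ref{tfpc1} yields an irreducible filter strictly above $P$ avoiding $a$, contradicting $P\in(\varphi(a)^{c})_M$. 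In the converse direction, $\mathbf{(G4)}$ gives $\neg\neg a\in P$, and Lemma \ref{irr-tau} legitimately applies with $\tau=G$: a $G$-function is a frontal operator (as the paper notes, citing \cite[Section 3, Corollary 19]{CSM0}), and in fact the proof of Lemma \ref{irr-tau} only uses $\mathbf{(i3)}$, which is part of the definition of a $G$-function; this forces any irreducible $Q\supseteq P$ with $a\notin Q$ to coincide with $P$, which is the required maximality. In short, your argument fills a gap the paper delegates to the literature, using only Lemma \ref{irr-tau}, Corollary \ref{tfpc1}, and the standard filter-extension construction already present in the paper.
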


Let $H$ be a bounded Hilbert algebra and $a\in H$. We have that
$\varphi(\neg a) = \varphi(a) \Ra \varphi(0) = \varphi(a) \Ra
\emptyset$. In what follows we write $\neg \varphi(a)$ in place of
$\varphi(a) \Ra \emptyset$.
\vspace{1pt}

Let $(P,\leq)$ be a poset and let $U\in P^+$. Note that $U \cup
((\neg \neg U) \cap (U^c)_M) \in P^+$ because $U \cup ((\neg \neg
U) \cap (U^c)_M) = (U \cup (U^c)_M)\cap \neg \neg U$ and $U \cup
(U^c)_M \in P^+$.

\begin{lem} \label{g1}
If $(H,G) \in \GabHil$, then $(\X(H)^+, \Gp) \in \GabIS$.
Moreover, $\Gp$ takes the form $\Gp(U) = U \cup ((\neg \neg U)
\cap (U^c)_M)$.
\end{lem}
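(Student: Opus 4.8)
The plan is to follow exactly the pattern established in the proofs of Lemma~\ref{grb} (for the $\gamma$-operator) and Lemma~\ref{ps4} (for the successor), since the $G$-function is again a frontal operator augmented with extra conditions, and we already know from Proposition~\ref{pp} that $\Gp$ is a frontal operator on the Heyting algebra $\X(H)^+$. The core of the argument is therefore to establish the explicit closed form $\Gp(U) = U \cup ((\neg \neg U) \cap (U^c)_M)$, because once we have this formula, verifying conditions $\mathbf{(G4)}$ and $\mathbf{(G5)}$ for $\Gp$ reduces to set-theoretic manipulations involving $\neg \neg U$ and $(U^c)_M$ on the Heyting algebra $\X(H)^+$, exactly as the membership conditions $b\ra a\leq \neg\neg a \ra b$ translate into the pointwise characterization of $G$.

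First I would treat the representable case. For $U \in \FC(H)$, write $U = \bigcap_{i=1}^{n}\varphi(a_i)$ with $a_1,\ldots,a_n \in H$; since $\Gp$ is a frontal operator we have $\Gp(U) = \bigcap_{i=1}^{n}\varphi(G(a_i))$. Applying Lemma~\ref{Gabr} gives $\Gp(U) = \bigcap_{i=1}^{n}\big(\varphi(a_i) \cup (\varphi(\neg\neg a_i)\cap (\varphi(a_i)^c)_M)\big)$. The goal is to collapse this intersection into $\big(\bigcap_i \varphi(a_i)\big) \cup \big((\neg\neg U)\cap (\bigcup_i \varphi(a_i)^c)_M\big)$. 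Here I would lean on Lemma~\ref{ps1}, which already tells us that $\bigcap_i(\varphi(a_i)\cup(\varphi(a_i)^c)_M) = U \cup (U^c)_M$ where $U^c = \bigcup_i \varphi(a_i)^c$, together with the identity $U \cup ((\neg\neg U)\cap(U^c)_M) = (U \cup (U^c)_M)\cap \neg\neg U$ noted just before the statement. The point is that $\varphi$ commutes with $\neg\neg$ (using $\varphi(a)\Ra\varphi(b) = \varphi(a\to b)$ and $\varphi(0)=\emptyset$), so $\neg\neg U = \bigcap_i \varphi(\neg\neg a_i)$, and the distributivity of the Heyting algebra $\X(H)^+$ lets the $\neg\neg$-factors and the maximal-element sets recombine correctly.

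The harder part is extending the formula from $\FC(H)$ to all of $\X(H)^+$, since $\Gp$ is defined on the whole completion and $U$ need not be representable. For this I would imitate the argument in Lemma~\ref{grb}: establish the formula directly from the definition of $\Gp$ via irreducible filters. Concretely, I expect to prove the two inclusions $\Gp(U) \subseteq U \cup ((\neg\neg U)\cap(U^c)_M)$ and the reverse by a case analysis on whether $P \in U$, mirroring the reasoning that produced $\gap(U) = U \cup (\X(H))_M$, but now tracking the additional $\neg\neg U$ membership condition through Lemma~\ref{irr-tau} and Corollary~\ref{corze}. Once the closed form holds on all of $\X(H)^+$, checking $\mathbf{(G4)}$ amounts to $\Gp(U)\subseteq \neg\neg U$, which is immediate since both summands $U$ and $(\neg\neg U)\cap(U^c)_M$ lie inside $\neg\neg U$ (using $U \subseteq \neg\neg U$), and $\mathbf{(G5)}$ becomes a comparison $\Gp(U)\Ra U \subseteq \neg\neg U \Ra U$ that I would verify using the explicit form of $\Ra$ in~(\ref{imp}) together with Lemma~\ref{ps3} to control $(U^c)_M$. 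The main obstacle, as in the successor case, is the bookkeeping that confirms $(U^c)_M$ behaves correctly on non-representable $U$; I anticipate needing the saturation identity of Lemma~\ref{ps3} to guarantee that $\neg\neg$ and the maximal-element operator interact as the formula predicts.
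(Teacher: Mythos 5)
Your first step --- the representable case --- is exactly the paper's proof: write $U=\bigcap_{i=1}^{n}\varphi(a_i)$, use that $\Gp$ is a frontal operator (hence preserves the finite meet) to get $\Gp(U)=\bigcap_{i=1}^{n}\varphi(G(a_i))$, apply Lemma \ref{Gabr}, recombine via Lemma \ref{ps1} and distributivity into $(U\cup (U^c)_M)\cap \neg\neg U$, then check $\mathbf{(G4)}$ directly from the formula and $\mathbf{(G5)}$ via Lemma \ref{ps3}. Had you stopped there (and run the $\mathbf{(G4)}$/$\mathbf{(G5)}$ checks on representable $U$), you would have reproduced the paper's argument in full: the paper never goes beyond $U\in \FC(H)$, and its conclusion, like that of Lemma \ref{ps4}, is really about the subalgebra $\FC(H)$, which is all that Proposition \ref{propGab} needs.

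The genuine flaw is your second step, the planned extension of the closed form and of the $G$-function conditions to arbitrary non-representable $U\in \X(H)^{+}$, on which you make your verification of $\mathbf{(G4)}$ and $\mathbf{(G5)}$ depend. This is not ``harder bookkeeping''; it is impossible, and the paper itself says so: its closing remark states that for $H=\mathbb{N}\oplus\mathbb{N}^{0}$, which carries a $G$-function, the Heyting algebra $(\X(H))^{+}$ has \emph{no} $G$-function at all (just as, by the discussion after Theorem \ref{succ-ACC}, it has no successor, because $\X(H)$ fails the ascending chain condition). So no argument can establish that $\Gp$ is a $G$-function on all of $\X(H)^{+}$. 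The technical point where your plan breaks is Lemma \ref{ps3}: the saturation identity $\bigcup_i\varphi(a_i)^c=((\bigcup_i\varphi(a_i)^c)_M]$ is proved by a Zorn's lemma argument that uses the finitely many $a_i$ essentially, and an analogous identity for arbitrary downsets of $\X(H)$ is equivalent to the ACC, which need not hold; without it both the inclusion needed for $\mathbf{(G5)}$ and even the set-theoretic formula $\Gp(U)=U\cup((\neg\neg U)\cap (U^c)_M)$ can fail (one can exhibit a non-representable upset $U$ of $\X(\mathbb{N}\oplus\mathbb{N}^{0})$ with a maximal point of $U^c$ lying outside $\Gp(U)$). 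Your analogy with Lemma \ref{grb} is what misleads you: the $\gamma$ case does go through on all of $\X(H)^{+}$, but only because its formula involves $(\X(H))_M=\gap(\emptyset)=\gap(\varphi(0))$, i.e.\ the maximal points of the whole space, which is representable data, and monotonicity then handles every $U$; the $G$-formula instead involves $(U^c)_M$ for the particular $U$, and for non-representable $U$ these maximal points cannot be controlled.
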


\begin{proof}
Assume that  $U\in \FC(H)$, so there exist $a_1,\ldots,a_n\in H$ such that
$U = \varphi(a_1)\cap \cdots \cap \varphi(a_n)$. In what follows we
will see that $\Gp(U) =U \cup ((\neg \neg U) \cap (U^c)_M)$.

It follows from Lemma \ref{Gabr} that $\Gp(G(a_i)) =
\varphi(a_i)\cup [\varphi(\neg \neg a_i) \cap
(\varphi(a_i)^{c})_M)]$ for every $i=1,\ldots,n$. Hence, using
 Lemma \ref{ps1} we have
\[
\begin{array}
[c]{lllll}
\Gp(U)& = & \bigcap_{i=1}^{n} \varphi(G(a_i))&  & \\
        & =    & \bigcap_{i=1}^{n}(\varphi(a_i)\cup (\varphi(a_i)^{c})_M) \cap \varphi(\neg \neg a_i)   &  & \\
        & =    & \bigcap_{i=1}^{n} (\varphi(a_i)\cup (\varphi(a_i)^{c})_M)\cap \varphi(\neg \neg a_i)   &  & \\
        & =    & (\bigcap_{i=1}^{n} \varphi(a_i)\cup (\varphi(a_i)^{c})_M) \cap (\bigcap_{i=1}^{n} \neg \neg \varphi(a_i))   &  & \\
        & =    & (U \cup (U^c)_M) \cap \neg \neg (\bigcap_{i=1}^{n} \varphi(a_i))   &  & \\
        & =    & (U \cup (U^c)_M) \cap \neg \neg U.& &
\end{array}
\]
Therefore, $\Gp(U) = U \cup ((U^c)_M \cap \neg \neg U)$.

Now we prove that $\Gp$ is a $G$-function. Let $U\in \FC(H)$. Then
$\Gp(U) = U \cup ((\neg \neg U) \cap (U^c)_M) \subseteq U \cup
\neg \neg U = \neg \neg U$, so $\Gp(U) \subseteq \neg \neg U$.
Finally we will need to prove that $\Gp(U) \Ra U \subseteq \neg
\neg U \Ra U$, i.e., $(\neg \neg U \cap U^c] \subseteq (\neg \neg
U \cap (U^c)_M]$. This inclusion follows from Lemma \ref{ps3}.
\end{proof}

The following proposition follows from Corollary \ref{cor0} and
Lemma \ref{g1}.

\begin{prop}\label{propGab}
The functor $\fro:\FHil_0 \ra \FIS_0$ can be restricted to a
functor from $\fro: \GabHil \ra \GabIS$.
\end{prop}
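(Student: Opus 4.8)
The plan is to show that the functor $\fro:\FHil_0 \ra \FIS_0$ of Corollary \ref{cor0} carries $\GabHil$ into $\GabIS$. First I would observe that $\GabHil$ is a full subcategory of $\FHil_0$: a $G$-function is in particular a frontal operator on a bounded Hilbert algebra, so every object $(H,G)$ of $\GabHil$ is an object of $\FHil_0$, and since the morphisms of $\FHil_0$ are exactly the bounded Hilbert algebra homomorphisms commuting with the frontal operator, those between $\GabHil$-objects are precisely the homomorphisms commuting with the $G$-functions, i.e.\ the $\GabHil$-morphisms. The same reasoning shows that $\GabIS$ is a full subcategory of $\FIS_0$. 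Hence $\fro$ is already defined on $\GabHil$ and preserves identities and composition by Corollary \ref{cor0}, and it suffices to check that it sends $\GabHil$-objects to $\GabIS$-objects; the morphism part will then be automatic from fullness of $\GabIS$ in $\FIS_0$.

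For the object part I would fix $(H,G)\in \GabHil$, so that $\fro(H,G) = (\FC(H),\Gp)$. Lemma \ref{g1} gives $(\X(H)^+,\Gp)\in \GabIS$ and, through the computation $\Gp(U) = U\cup ((\neg\neg U)\cap (U^c)_M)$, also shows that $\FC(H)$ is closed under $\Gp$. Since $\FC(H)$ contains $\emptyset = \varphi(0)$ and $\X(H) = \varphi(1)$ and is closed under $\cap$ and $\Ra$ (Proposition \ref{closure-L(H)-under-Rightarrow}), it is a subalgebra of $(\X(H)^+,\Gp)$ in the signature $\{\we,\ra,0,1,G\}$. As the defining conditions of $\GabIS$ — namely $\mathbf{(i2)}$, $\mathbf{(i3)}$, preservation of meets, $\mathbf{(G4)}$ and $\mathbf{(G5)}$ — are equational, the class is closed under subalgebras, and therefore $(\FC(H),\Gp)\in \GabIS$.

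Granting this, the morphism part needs no further work: if $h:(H_1,G_1)\ra (H_2,G_2)$ is a morphism of $\GabHil$, then, regarded as an $\FHil_0$-morphism, Corollary \ref{cor0} yields a morphism $\hIS:(\FC(H_1),G_1^{\pi})\ra (\FC(H_2),G_2^{\pi})$ of $\FIS_0$ between two objects that, by the object part, lie in $\GabIS$; fullness of $\GabIS$ in $\FIS_0$ then makes $\hIS$ a morphism of $\GabIS$. The only genuinely substantive ingredient is Lemma \ref{g1}, and the proposition itself is a formal transfer along the adjunction of Corollary \ref{cor0}. I expect the sole point deserving attention to be the closure of $\FC(H)$ under $\Gp$ — exactly what the computation in Lemma \ref{g1} supplies — together with the immediate remark that a homomorphism commuting with the frontal operator commutes with the Gabbay function, because in these algebras the two operations coincide.
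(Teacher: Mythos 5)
Your proof is correct and takes essentially the same route as the paper, which disposes of this proposition in one line by combining Corollary \ref{cor0} (functoriality between $\FHil_0$ and $\FIS_0$) with Lemma \ref{g1} (the object part), exactly the two ingredients you use. The extra scaffolding you supply --- fullness of $\GabHil$ in $\FHil_0$ and of $\GabIS$ in $\FIS_0$, and the subalgebra/equational-class argument transferring membership in $\GabIS$ from $(\X(H)^+,\Gp)$ down to $(\FC(H),\Gp)$ --- is precisely the detail the paper leaves implicit; in fact the computation in the proof of Lemma \ref{g1} is carried out for $U\in\FC(H)$, so the closure of $\FC(H)$ under $\Gp$ and the validity of $\mathbf{(G4)}$ and $\mathbf{(G5)}$ there are already what that lemma establishes.
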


We also write $\U$ for the forgetful functor from $\GabIS$ to
$\GabHil$. The following result follows from Corollary \ref{cor1}
and Proposition \ref{propGab}.

\begin{cor}
The functor $\fro: \GabHil \ra \GabIS$ is left adjoint to $\U$.
\end{cor}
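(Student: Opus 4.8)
The plan is to deduce the adjunction by restricting the adjunction $\fro \dashv \U$ between $\FHil_0$ and $\FIS_0$ established in Corollary \ref{cor1} to the full subcategories $\GabHil$ and $\GabIS$. The first thing I would record is that $\GabHil$ is a full subcategory of $\FHil_0$ and $\GabIS$ is a full subcategory of $\FIS_0$. This rests on the uniqueness of the $G$-function: since a $G$-function, when it exists, is completely determined by the characterization $G(a) = \min\{b : b\ra a \leq \neg\neg a \ra b\}$, an object of $\GabHil$ is nothing but an object of $\FHil_0$ whose frontal operator happens to be (the unique) $G$-function, and a morphism of $\GabHil$ is exactly a morphism of $\FHil_0$ between two such objects; the same holds for $\GabIS$ inside $\FIS_0$. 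Thus no extra morphism conditions are imposed beyond those of the ambient categories.

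Next I would check that the two functors respect these subcategories. Proposition \ref{propGab} already gives that $\fro$ maps $\GabHil$ into $\GabIS$; concretely, by Lemma \ref{g1} the operator $\Gp$ on $\fro(H)$ is the $G$-function $\Gp(U) = U \cup ((\neg\neg U)\cap (U^c)_M)$, so $(\fro(H),\Gp) \in \GabIS$. In the other direction, $\U$ maps $\GabIS$ into $\GabHil$, because the $G$-function conditions are expressed purely through $\ra$, $\neg$ and $\leq$, and hence survive the passage to the Hilbert-algebra reduct. Finally, the unit $\varphi_H : H \ra \U(\fro(H))$ of the ambient adjunction is a morphism of $\GabHil$ whenever $H\in \GabHil$: it is already a morphism in $\FHil_0$ preserving the frontal operator, and both its domain and codomain now lie in $\GabHil$, so by fullness it is a $\GabHil$-morphism.

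With these points in place the universal property is immediate. Given $H\in \GabHil$, $A\in \GabIS$, and a morphism $h: H \ra \U(A)$ in $\GabHil$, I would view $h$ as a morphism in $\FHil_0$ and invoke Corollary \ref{cor1} to obtain the unique $\FIS_0$-morphism $\overline{h}: \fro(H) \ra A$ with $\U(\overline{h})\circ \varphi_H = h$. Since $\fro(H)$ and $A$ both lie in the full subcategory $\GabIS$, the map $\overline{h}$ is automatically a morphism of $\GabIS$, and uniqueness transfers verbatim. Hence $\varphi_H$ exhibits $\fro(H)$ as the free object over $H$ in $\GabIS$, proving that $\fro : \GabHil \ra \GabIS$ is left adjoint to $\U$. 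The only genuine content, and the step I expect to carry the weight, is the fullness observation of the first paragraph, namely the uniqueness of $G$-functions; once that collapses the two pairs of categories into full subcategories preserved by $\fro$ and $\U$, the restriction of an adjunction to such subcategories is a routine categorical fact.
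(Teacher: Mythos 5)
Your proposal is correct and follows essentially the same route as the paper, which simply observes that the result follows from Corollary \ref{cor1} (the ambient adjunction between $\FHil_0$ and $\FIS_0$) and Proposition \ref{propGab} (the restriction of $\fro$ to $\GabHil \ra \GabIS$); you have merely made explicit the routine verification that an adjunction restricts to full subcategories preserved by both functors. One small remark: the fullness of $\GabHil$ in $\FHil_0$ (and of $\GabIS$ in $\FIS_0$) does not really rest on the uniqueness of the $G$-function, since morphisms in both categories are by definition algebra homomorphisms preserving the designated unary operation, so the hom-sets coincide automatically.
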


As in the case of Heyting algebras with successor, we have that
$\mathbb{N}\oplus\mathbb{N}^{0}$ is a Heyting algebra with a
$G$-function. It can be also proved that
$(\X(\mathbb{N}\oplus\mathbb{N}^{0}))^{+}$ is not a Heyting
algebra with a $G$-function.

\subsection*{Acknowledgments}
This project has received funding from the European Union's
Horizon 2020 research and innovation program  under the Marie
Sklodowska-Curie grant agreement No. 689176.

The first author was also partially supported by the research
grant 2014 SGR 788 from the government of Catalonia and by the
research projects MTM2016-74892-P  from the
government of Spain, which includes \textsc{feder} funds from the
European Union and he also acknowledges financial support from the
Spanish Ministry of Economy and Competitiveness, through the
``Mar\'ia de Maeztu'' Programme for Units of Excellence in R\&D
(MDM-2014-0445). The second author was also supported by CONICET
Project PIP 112-201501-00412.

Both authors declare that they have no conflict of interest. This
article does not contain any studies with animals or humans
performed by any of the authors.

{\small }

\quad

\noindent ---------------------------------------------------------------------------------------
\\
Ramon Jansana,\\
Departament de Filosofia,\\
Universitat de  Barcelona.\\
Montalegre, 6,\\
08001, Barcelona,\\
España.\\
jansana@ub.edu

\noindent  -----------------------------------------------------------------------------------------
\\
Hern\'an Javier San Mart\'in,\\
Departamento de Matem\'atica, \\
Facultad de Ciencias Exactas (UNLP), \\
and CONICET.\\
Casilla de correos 172,\\
La Plata (1900),\\
Argentina.\\
hsanmartin@mate.unlp.edu.ar

\end{document}